\newtheorem{theorem}{Theorem}[section]
\newtheorem{lemma}[theorem]{Lemma}
\newtheorem{corollary}[theorem]{Corollary}
\newtheorem{proposition}[theorem]{Proposition}
\theoremstyle{definition}
\newtheorem{definition}[theorem]{Definition}
\theoremstyle{remark}
\newtheorem{remark}[theorem]{Remark}
\numberwithin{equation}{section}
\newenvironment{bew}[2]{\removelastskip\vspace{6pt}\noindent
 {\it Proof  #1.}~\rm#2}{\par\vspace{6pt}}
\newenvironment{bteoa}[2]{\removelastskip\vspace{8pt}\noindent
 {\textbf{Theorem A. } #1}\it#2}{\par\vspace{10pt}}
\newenvironment{bteob}[2]{\removelastskip\vspace{8pt}\noindent
 {\textbf{Theorem B. } #1}\it#2}{\par\vspace{10pt}}
\newenvironment{bteoc}[2]{\removelastskip\vspace{8pt}\noindent
 {\textbf{Theorem C. }#1}\it#2}{\par\vspace{10pt}}
\newcommand{\N}{\mathbb{N}}
\newcommand{\diff}{\mathrm{Diff}^{1}_{+}([-1,1])}
\newcommand{\D}{\mathcal{D}}
\newcommand{\I}{\mathcal{I}}
\begin{document}

\title{{Renormalization for critical orders close to $2\mathbb{N}$}}
\author{Judith Cruz}
\address{Departamento Acad\'emico de Matem\'aticas y Estad\'istica, DAME-UNSA, Universidad Nacional de San Agust\'in, Apartado 23, Av. Independencia s/n, Arequipa, Per\'u}
\curraddr{}
\email{jcruzto@unsa.edu.pe}
\thanks{Partially supported by CAPES.}

\author{Daniel Smania}
\address{Departamento de Matem\'atica, ICMC-USP, Universidade de S\~{a}o Paulo, Caixa Postal 668, S\~{a}o Carlos-SP, CEP 13560-970, Brazil}
\email{smania@icmc.usp.br}
\thanks{Partially supported by  FAPESP  2008/02841-4, CNPq 310964/2006-7 and 303669/2009-8.}
\urladdr{http://www.icmc.usp.br/$\sim$smania}
\subjclass[2010]{Primary 37F25, 37E20, 37E05}

\keywords{ renormalization, unimodal, universality, hyperbolicity}

\date{\today}

\dedicatory{}
\begin{abstract}
We study the dynamics of the renormalization operator acting on the
space of pairs $(\phi,t),$ where $\phi$ is a diffeomorphism and
$t\in[0,1],$  interpreted as unimodal maps $\phi\circ q_t$, where
$q_t(x)=-2t|x|^{\alpha}+2t-1$.  We prove the so called complex
bounds for sufficiently renormalizable pairs with bounded
combinatorics. This allows us to show that if the critical exponent
$\alpha$ is close to an even number then the renormalization
operator has a unique fixed point. Furthermore this fixed point is
hyperbolic and its codimension one stable manifold contains all
infinitely renormalizable pairs.
\end{abstract}

\maketitle

\section{Introduction}
The theory of renormalization were motivated by
the conjecture of Feigenbaum and P. Coullet-C. Tresser which stated that the period-doubling operator, acting on the
space of unimodal maps, has a unique fixed-point which is hyperbolic
with an one-dimensional unstable direction.

Lyubich~\cite{Lyu99} proved the Feigenbaum-Coullet-Tresser
conjecture for unimodal maps with {\it even} critical order asserting that the period-doubling fixed point is
hyperbolic, with a codimension one stable manifold (indeed Lyubich proved a far more general result). An extension the results of Lyubich's hyperbolicity to the space of $C^r$ unimodal maps with $r$ sufficiently large were given by E. de
Faria, W. de Melo and A. Pinto\cite{E-W-P}.

All theses results  on the uniqueness,
hyperbolicity and universality  of the fixed point of the renormalization operator  are for
 unimodal maps whose   critical exponent  is an positive even integer. When the order is a non-integer positive integer, very few rigorous results are known. Our goal is to obtain some results in this case.

Fix $\alpha > 1$ and  consider the  class of unimodal
maps $f=\phi\circ q_t:[-1,1]\rightarrow [-1,1],$ where $\phi$ is
orientation preserving  diffeomorphism of the interval $[-1,1]$, $\phi(-1)=-1$, $\phi(1)=1$,
and $q_t(x)=- 2t |x|^{\alpha}+2t-1.$ Note that $q_{t}$ preserves the interval $[-1,1]$
when $t\in [0,1].$ Marco Martens~\cite{MM} proved, based on real methods, the existence of
fixed points to the renormalization operator, for every periodic
combinatorial type, acting on the class of unimodal
maps mentioned above.

It is not clear how to see the renormalization
operator, acting on the class of unimodal maps, as an analytic
operator when the critical exponent $\alpha$ is not an even natural
number. In view of this problem we define a new renormalization
operator, denoted by $\mathcal{\widetilde{R}}_{\alpha},$ in a
suitable space of pairs $(\phi,t),$ where $\phi\circ q_t$ is a
unimodal map.

The advantage of dealing with the new renormalization operator is
that it is a compact complex analytic operator when we endow the ambient
space of pairs $(\phi,t)$ with a structure of a complex analytic
space. The complexification of the renormalization operator is done using a result of complex a priori bounds. Then we can
see that the map $\alpha\mapsto \mathcal{\widetilde{R}}_{\alpha}$ is
a real analytic family of operators. This allow us to use
perturbation methods to solved  the conjecture for the
renormalization operator when the critical exponent $\alpha$ is
close enough to an even natural number. So we stablished
\begin{bteoa} Given a periodic combinatorics $\sigma$, if  $\alpha$ is
close enough to $2 \mathbb{N}$ then some iterate of the renormalization operator associate with $\sigma$  acting on the space of pairs $(\phi, t)$, where $\phi$ is a real analytic map and $t\in[0, 1],$ has a hyperbolic fixed point with a codimension one stable manifold.
\end{bteoa}

\begin{bteob}
For $\alpha$ is
close enough to $2 \mathbb{N},$ the fixed point of the renormalization operator associate with $\sigma$ is unique.
\end{bteob}
Also we stablished the
universality for infinitely renormalizable pairs
\begin{bteoc}
The stable manifold of the fixed point contains all the pairs infinitely renormalizable with the combinatorics of the fixed point.
\end{bteoc}

The structure of the paper is as follows. In the Section 2 we first introduce basic
notions on the renormalization of unimodal maps and unimodal pairs. Then in the Section 3 we state our
results on the hyperbolicity of the fixed point when the critical order is close enough to
an even natural number. We present in the Section 4 the real and complex a
priori bounds, the main tool in the proof of our results. In
the Section 5 we introduce the composition operator, denoted by $L$, which relates the new renormalization operator and the usual one.
For even $\alpha,$ we consider the usual renormalization
operator as an operator acting on the space of holomorphic
functions in the Section 6. Also we show the relations between the two
renormalization operators when the critical exponent is an even
natural number. In the last section we proceed to prove the main
theorems.


\section{Preliminaries}\label{'sec1'}

\subsection{Some notations}

Here the positive integers form the set of natural number denoted in
the standard form by $\mathbb{N}.$ Let $I$ be a bounded interval in
the real line. The $a$-stadium set $D_{a}(I)$ is the set of points in the complex plane whose distance to the
interval $I$ is smaller than $a>0.$ For sets $V$ and $W$ contained
in the complex plane we say the subset $V$ is compactly contained in
$W$ denoting by $V\Subset W.$

The Banach space $\mathrm{C}^{k}([-1,1], \mathbb{R} ),$ $k\in \N,$ is
the set of maps $\mathrm{C}^k$ endowed with the sup norm
$$|f|_{\mathrm{C}^k([-1,1])}=\sup_{x\in[-1,1]}\{|f(x)|,|Df(x)|,\dots,|D^k f(x)|\}.$$
We denote as $\diff$ the set of diffeomorphism $\mathrm{C}^{1}$ that
preserve the orientation of the interval $[-1,1].$ This is an open
subset of the Banach space $\mathrm{C}^{1}([-1,1], \mathbb{R} ).$

\subsection{Renormalization of unimodal maps}

In this part we present usual notions of the renormalization
operator, denoted by $\mathcal{R}.$ We follow the  definitions and notations
 as in A. Avila, M. Martens e W. de
Melo~\cite{A-M-W}. Fix $\alpha > 1,$ the so-called  critical order of the unimodal map. The
parametric unimodal family $ q_t: [-1,1]\rightarrow [-1,1],$ with
$t\in [0,1]$ and critical exponent $\alpha,$ is defined
by
$$q_t(x)=- 2t |x|^{\alpha}+2t-1.$$
The parameter $t$ defines the maximum $ q_t(0)=2t-1.$ Let $$
f=\phi\circ q_t:[-1,1]\rightarrow [-1,1]$$ be the unimodal map where
$\phi\in \diff$ and $\alpha > 1$ is its critical exponent.

A
permutation $\sigma\colon \mathcal{J} \rightarrow \mathcal{J}$, where $\mathcal{J}$ is a finite set with  $q$ elements,  endowed with a total order $\prec$, is called a unimodal permutation with period $q$  if  it
satisfies the following condition. Embedding $\mathcal{J}$ in the real line
preserving the order $\prec~,$ then the graph of the permutation
$\sigma$ on $\mathbb{R}^2$ extends,  by the union of the
consecutive points of the graph of $\sigma$ by segments,
to the graph of a unimodal map. Moreover the period of $\sigma$ is $q$.

A collection $\I=\{I_1,I_2,...,I_q\}$ of closed intervals in
$[-1,1]$ is called a cycle for a unimodal map $f$ if it has the
following properties:
\begin{enumerate}
        \item there exists a repelling periodic point $p\in (-1,1)$
        with $I_q=[-|p|,|p|].$
        \item $f:I_i\rightarrow I_{i+1},$ $i=1,2,...,q-1,$ are difeomorphisms.
        \item $f(I_q)\subset I_1$ with $f(p)\in \partial I_1,$ the
        boundary of $I_1.$
        \item the interiors of $I_1,I_2,...,I_q$ are pairwise
        disjoint.
\end{enumerate}

Consider  the  collection $\mathcal{J}_q=\{1,2,...,q\}$   with the order relation $\prec$ defined by
$$j\prec i ,\,\, j\neq i ,\mbox{ iff } \inf I_j<\inf
I_i.$$ Then  the map  $\sigma: \mathcal{J}_q\rightarrow\mathcal{J}_q$
$$\sigma(i)=i+1 \mbox{ mod } q,$$ is a unimodal permutation. We say that $\sigma=\sigma(\I)$ is the combinatorics of the cycle $\I$.

As direct consequence of the definition of cycle
$\I=\{I_1,I_2,...,I_q\}$ we have
\begin{itemize}
        \item $\I$ inheres an order from $[-1,1],$
        \item the map $$\sigma=\sigma(\I):I_i\mapsto I_{i+1 \mbox{ mod } q} $$
        on $\I$ is an unimodal permutation,
        \item the orientation $$o_{\I}:\I\rightarrow\{-1,1\} $$ is defined such that
        $o_{\I}(I_i)=1 $ when $f^i(p)$ is the left extreme of
        $I_i$ and $o_{\I}(I_i)=-1 $ in other case. So we have the cycle $\I$
        is oriented.
\end{itemize}

\begin{definition}
A unimodal map $f=\phi\circ q_t$ is called renormalizable if it has
a cycle. The first return map to $I_q$ will be, after a re-escaling,
a unimodal map. The prime renormalization period of $f$ is the smallest $q >1$
satisfying the above properties. Define the renormalization operator
$\mathcal{R}$ such that for an unimodal renormalizable map
$f=\phi\circ q_t$ we have that $\mathcal{R}f$ is a unimodal map
defined by
$$\mathcal{R}f(z)=\frac{1}{p}f^{q}(pz),$$ $z\in[-1,1].$ The unimodal
map $\mathcal{R}f$ is called the renormalization of $f. $
\end{definition}

\subsection{Renormalization of a pair}

Consider the set $$ \mathcal{U}=\diff\times[0,1],$$ where an element
$(\phi,t)\in \mathcal{U}$ should be interpretated as the unimodal
map $$ f=\phi\circ q_t:[-1,1]\rightarrow [-1,1],$$ with critical
exponent $\alpha > 1.$ The diffeomorphism $\phi$ is called the
diffeomorphic part of the unimodal map $f$. The metric on
$\mathcal{U}$ is the product metric induced by the norm of the sup
on $\diff$ and the interval metric.

Due the  problem of the non  analyticity of the unimodal map at its critical point when $\alpha \not \in 2 \mathbb{N},$ it is convenient to consider unimodal maps as a pair $(\phi,t).$
\begin{definition}
A pair $(\phi,t)\in \mathcal{U}$ is called renormalizable if
$f=\phi\circ q_t$ is renormalizable. The prime renormalization period
of $(\phi,t)$ is the same of $f.$
\end{definition}

Let $\sigma$ be an unimodal permutation and
$$\mathcal{U}_{\sigma}=\{(\phi,t)\in \mathcal{U}\,| \,f=\phi\circ q_t\mbox{ has a cycle } \I \mbox{
com } \sigma(\I)=\sigma\}.$$

Let $I\subset [-1,1]$ be an oriented interval. We consider the zoom
operator
$$Z_I:\diff\rightarrow \diff,$$ which assign to the
diffeomorphism $\phi:I\rightarrow \phi(I),$ the diffeomorphism
$Z_I(\phi):[-1,1]\rightarrow [-1,1]$ defined by:
$$Z_I(\phi)=A_{\phi(I)}\circ \phi \circ A_{I}^{-1},$$ where the transformation
$A_{J}:J\rightarrow [-1,1]$ is the  unique affine, orientation preserving transformation carrying the closed interval $J$ to the
interval $[-1,1].$ The intervals $I$ e $\phi(I)$ have the same
orientation.

For $(\phi,t)\in \mathcal{U}_{\sigma},$ we define the orientation preserving diffeomorphism
$\phi_0:[-1,1]\rightarrow [-1,1]$ as
$$\phi_0=Z_{\phi^{-1}(I_1)}(\phi).$$
Here $I_0=I_q.$ On the other hand for each $I_i\in \I,$ $i\neq 0,$
we define the orientation preserving diffeomorphism
$q_i:[-1,1]\rightarrow [-1,1]$ e $\phi_i:[-1,1]\rightarrow [-1,1]$ by
$$q_i=Z_{I_i}(q_t)$$ and
$$\phi_i=Z_{q_t(I_i)}(\phi),$$ where $q_t(I_i)$ and
$I_{i+1}$ are orientated in the same direction, this is the
orientation $o(I_{i+1})$ defined by the cycle $\I.$ Furthermore, let
$$t_1=\frac{|q_t(I_q)|}{|\phi^{-1}(I_1)|}.$$

Since $f(I_q)=\phi\circ q_t(I_q)\subset I_1,$ in the definition of
the cycle $\I,$ we have that $t_1\in [0,1].$ This is equivalent to
$q_t(0)\in \phi^{-1}(I_1).$

Now we can define the renormalization operator. The
$\sigma$-renormalization operator, denoted by
$$\mathcal{\widetilde{R}}_{\sigma}:U_{\sigma}\rightarrow U,$$ is
defined by the following expression
  \begin{eqnarray}
   \mathcal{\widetilde{R}}_{\sigma}(\phi,t)=((\phi_{q-1}\circ q_{q_-1})\circ...
   \circ(\phi_{2}\circ q_{2})\circ(\phi_{1}\circ q_{1})\circ
   \phi_{0},t_1).\label{'formulanovoR'}
  \end{eqnarray}

For each $\sigma$ there exists
a unique maximal factorization $\sigma=<\sigma_n,...,\sigma_2,\sigma_1>$ such that
$$\mathcal{\widetilde{R}}_{\sigma}=\mathcal{\widetilde{R}}_{\sigma_n}\circ...\circ
\mathcal{\widetilde{R}}_{\sigma_2}\circ
\mathcal{\widetilde{R}}_{\sigma_1}.$$ A unimodal permutation
$\sigma$ is called prime iff $\sigma=<\sigma>.$ Obviously each
permutation in the maximal factorization is prime. So using primes
unimodal permutations we obtain a partition of the set of
renormalizable pairs in $\mathcal{U}.$
\begin{definition}
The renormalization operator denoted by
$$\mathcal{\widetilde{R}}:\{\mbox{renormalizable pairs}\}=\bigcup_{\sigma\mbox{ prime}
}\mathcal{U}_{\sigma}\rightarrow \mathcal{U},$$ is defined by
$\mathcal{\widetilde{R}}\mid \mathcal{U}_{\sigma}=
\mathcal{\widetilde{R}}_{\sigma}.$
\end{definition}

We say that a pair $(\phi,t)\in\mathcal{ U}$ is $N$-times
renormalizable iff $\mathcal{\widetilde{R}}^n (\phi,t)$ is defined
for all $1\leq n\leq N.$ And $(\phi,t)$ is infinitely renormalizable if it is $N$-times renormalizable for all $N\geq 1.$

\begin{definition}
The set of renormalization times $\{q_{n}\}_{n\in \Lambda},$ with $\Lambda \subset \mathbb{N}$ and where $q_{n}<q_{n+1}$, is the set of integers $q$ such that $f$ is renormalizable of period $q.$
\end{definition}
We say that a pair $N$-times
renormalizable $(\phi,t)\in \mathcal{U},$ for $N$ big enough, has
bounded combinatorics by $B>0$ if  $f=\phi\circ q_t$
satisfies  $q_{n+1}/q_{n}\leq B,$ for
all $1\leq n<N.$

\subsection{Iterating pairs}\label{'sec2'}

Closely following the section~2 in~\cite{A-M-W} we
observe that a sequence of pairs in $\mathcal{U},$ produced by
applying any times the renormalization operator
$\mathcal{\widetilde{R}}_{\alpha},$ is such that each pair has as
first component a decomposition of diffeomorphism and the second a
parameter carrying the information of the unimodal part of the
unimodal map.

Fix a $N$-times renormalizable pair $f=(\phi,t)\in\mathcal{U}$
and let $\I ^n=\{I_1^{n},I_2^{n},...,I_{q_{n}}^{n} \}$ be the cycle
corresponding to $n$-th renormalization, $1\leq n\leq N.$ Each cycle
will be partitioned in sets
$$\I ^n= \bigcup _{k\geq 0}^{N}L_{k}^{n}.$$
The level sets $L_{k}^{n},$ $k\geq 0,$ are defined by induction. Let
$$\I ^0=L_0^0=\{[-1,1]\}.$$ If $\I^{n+1}\ni I_i^{n+1}\subset
I_j^{n}\in L_k^n$ and $0\notin I_i^{n+1}$ then $ I_i^{n+1}\in
L_{k+1}^{n+1}.$ If $0 \in I_i^{n+1}$ then $I_i^{n+1}\in
L_{0}^{n+1}.$ Observe that
$$\I ^n= \bigcup _{k\geq 0}^n L_{k}^{n},$$ for $n\leq N.$ First
to $I_1^n\in \I^n,$ define the orientation preserving diffeomorphism $\phi_0^n:[-1,1]\rightarrow [-1,1]$ where
$$\phi_0^n=Z_{\phi^{-1}(I_1^n)}(\phi).$$ Then for each $I_i^n\in \I^n,$ $i\neq
0,$ define the diffeomorphisms, that preserve orientation,
$q_i^n:[-1,1]\rightarrow [-1,1]$ and $\phi_i^n:[-1,1]\rightarrow
[-1,1]$ by
$$q_i^n=Z_{I_i^n}(q_t)$$ and $$\phi_i^n=Z_{q_t(I_i^n)}(\phi),$$
where $q_t(I_i^n)$ and $I_{i+1}^n$ are oriented in the same
direction, this is with the orientation $o(I_{i+1}^n)$ defined by
the cycle $\I^n.$ Furthermore, define
$$t_n=\frac{|q_t(I_0^n)|}{|\phi^{-1}(I_1^n)|}.$$ The above definitions
describe the first component of $\mathcal{\widetilde{R}}^n (\phi,t)$
that consists of the compositions of the diffeomorphisms $q_i^n$
and $\phi_i^n:$
$$\mathcal{\widetilde{R}}^n(\phi,t)=((\phi_{q_n-1}^{n}\circ q_{q_n-1}^{n})\circ...
\circ(\phi_{2}^{n}\circ q_{2}^{n})\circ(\phi_{1}^{n}\circ
q_{1}^{n})\circ \phi_{0}^{n},t_n). $$

\section{Precise statements of the main results}

Let $\mathcal{B}_V$ be the complex Banach space of holomorphic maps
$f$ defined in a neighborhood $V$ with a continuous extension to
$\overline{V},$ endowed with the sup norm. Let $\mathcal{A}_V$ be
the set of holomorphic maps $\varphi:V\rightarrow \mathbb{C}$ with
continuous extension in $\overline{V},$ where $\varphi(-1)=-1$ and
$\varphi(1)=1.$ This is an affine subspace of $\mathcal{B}_V.$

Denote by $\mathcal{T}_V$ the complex Banach space of holomorphic
maps $\omega\in \mathcal{B}_V $ of the form $\omega=\psi(x^{2n})$ in
a neighborhood of $[-1,1],$ with $\omega(-1)=\omega(1)=0,$ endowed
with the sup norm. Let $\mathcal{U}_V$ be  the set of holomorphic maps
$f:V\rightarrow \mathbb{C}$ with continuous extension in
$\overline{V}$ of the form $f=\psi(x^{2n})$ in a neighborhood of
$[-1,1],$ with $f(-1)=f(1)=-1.$ Then $\mathcal{U}_V$ is an affine
space.

Marco Martens~\cite[Theorem~$2.2$]{MM}  showed that the new
renormalization operator $\mathcal{\widetilde{R}}_{\alpha},$ defined
on the space of pairs whose first component are diffeomorphisms
$\mathcal{C}^2$ in $[-1,1]$ with critical exponent $\alpha>1,$ has
fixed points of any combinatorial type.

From now on we fix a prime combinatorics $\sigma$ with period smaller than $B$.  Denote by $\mathcal{H}_{\alpha}(C,\eta,M),$ $\alpha>1,$ the set of the pairs
$(\phi,t)$ satisfying
\begin{enumerate}
  \item $\phi\in \mathcal{A}_{D_{\eta}},$ where $\phi$ is univalent
  on $D_{\eta}$
  \item $\phi$ is real on the real line
  \item $|\phi|_{C^3([-1,1])}\leq C$
  \item $(\phi,t)$ is $M$-times renormalizable with
  combinatorics $\sigma.$
\end{enumerate}
A pair $(\phi,t)$ satisfying the first three above properties for some $\eta, C$  is
called a unimodal pair.

\begin{theorem}[Complex bounds]\label{'restate1'}
For all $\alpha_{0}>1,$ there exists $\varepsilon=\varepsilon(B),$ $\delta_0=\delta_0(B)>0$ and $C_0=C_0(B)>0$ such for all $\alpha \in (\alpha_{0}-\varepsilon,\alpha_{0}+\varepsilon)$ the following holds: for all
$C>0$ and $\eta>0$ there exists $N_0=N_0(C,\eta)$ such that if $(\phi,t)\in
\mathcal{H}_{\alpha}(C,\eta,M)$ with $M>N_0$ then
$\mathcal{\widetilde{R}}^n_{\alpha}(\phi,t)\in
\mathcal{H}_{\alpha}(C_0,\delta_0,M-n)$ for $M>n\geq N_0.$
\end{theorem}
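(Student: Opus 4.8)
The plan is to establish the complex bounds in two stages, following the standard real-then-complex strategy of Sullivan, McMullen and Lyubich adapted to the unimodal-pair setting, while tracking the dependence of all constants on the combinatorial bound $B$ and on the critical exponent $\alpha$. First I would prove \emph{real a priori bounds}: for a pair $(\phi,t)\in\mathcal{H}_\alpha(C,\eta,M)$ with $M$ large and bounded combinatorics by $B$, the successive cycles $\I^n$ have geometry controlled independently of $n$ — that is, the intervals $I_i^n$ are commensurable with their neighbors and the renormalized unimodal part stays in a fixed compact family (uniform $C^3$ bound $C_0$ on the diffeomorphic parts, and the parameter $t_n$ bounded away from $0$ and $1$). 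The key input here is that the zoom operators $Z_I$ applied along the orbit, together with the Koebe distortion principle, contract the $C^3$-size of the diffeomorphic factors; the composition formula at the end of Section~\ref{'sec2'} shows $\mathcal{\widetilde{R}}^n_\alpha(\phi,t)$ is a composition of such zoomed factors, and real bounds on the nonlinearity of $q_t$ (which degenerate but only polynomially as $\alpha$ varies over a compact set away from $1$) keep everything under control. This is where the hypothesis $\alpha_0>1$ and the smallness of $\varepsilon=\varepsilon(B)$ enter: one needs $\alpha$ bounded away from $1$ so that the critical point is genuinely nondegenerate, uniformly.

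Second, I would upgrade the real bounds to \emph{complex bounds}: given the uniform real geometry, I would show that the renormalized diffeomorphic part $\phi^{(n)}$ extends univalently to a definite stadium neighborhood $D_{\delta_0}$ with $\delta_0=\delta_0(B)$ independent of $n$. The mechanism is the usual one: the diffeomorphisms $\phi_i^n$ and $q_i^n$ are restrictions of maps defined on larger intervals, so by Koebe they extend univalently to neighborhoods whose size is a definite fraction of the size of the interval; the noninteger power $q_t(x)=-2t|x|^\alpha+2t-1$ is handled away from the critical point (where it is real-analytic), and near the critical point one uses that the first-return construction composes with $q_t$ at most once per level, so the branch of $|x|^\alpha$ involved is single-valued on a slit neighborhood that, after the affine rescaling $A_{I_i^n}$, becomes a genuine stadium of definite width. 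Pulling back the fixed complex neighborhood of the deepest interval $I_{q_n}^n$ through the orbit, the real bounds guarantee no loss of definite modulus, so after finitely many steps $N_0$ one lands in $\mathcal{H}_\alpha(C_0,\delta_0,M-n)$; the number $N_0=N_0(C,\eta)$ is how long it takes to forget the initial (possibly thin) neighborhood $D_\eta$ and reach the invariant region.

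The main obstacle is the presence of the noninteger critical exponent $\alpha$ in $q_t$, which destroys holomorphicity of the unimodal factor at the critical point $0$. The resolution is precisely the choice to work with pairs $(\phi,t)$: the renormalization formula \eqref{'formulanovoR'} only ever composes $\phi$ with rescaled copies of the \emph{fixed} map $q_t$ and never raises $q_t$ to a further noninteger power, so the branch of $z\mapsto z^\alpha$ that occurs is always the principal branch on $\mathbb{C}\setminus(-\infty,0]$, which is holomorphic on any stadium $D_a([-1,1])$ once $a<1$. Thus the complex-analytic continuation of the renormalized pair lives in $\mathcal{T}_V\times[0,1]$-type spaces exactly because the singular part is carried \emph{separately} in the bookkeeping; the nontrivial point — and the one requiring care in the estimates — is that the width $\delta_0$ of the final stadium can be chosen uniformly in $\alpha$ over $(\alpha_0-\varepsilon,\alpha_0+\varepsilon)$, which follows once one checks that all the Koebe-type distortion constants depend on $\alpha$ only through the compact range and the real bounds, hence can be made uniform by shrinking $\varepsilon$.
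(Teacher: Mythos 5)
Your overall two‐stage plan (real bounds, then complex extension of each zoomed factor, then compose) is the same strategy the paper uses, but the proposal glosses over exactly the step where the work actually happens, and it contains one misconception about the structure of the renormalized composition.

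First, the misconception. You write that ``the first-return construction composes with $q_t$ at most once per level.'' It does not: the $n$-th renormalization is the composition $\phi_{q_n-1}^n\circ q_{q_n-1}^n\circ\cdots\circ\phi_1^n\circ q_1^n\circ\phi_0^n$, which contains $q_n-1$ rescaled copies of $q_t$, and $q_n$ grows exponentially in $n$. The reason the domain of the composition does not collapse is \emph{not} that there is only one $q$-factor per level; it is a quantitative summability statement. The paper's Proposition~\ref{'rembound'} shows, using the level decomposition $\I^n=\bigcup_k L_k^n$ from \S\ref{'sec2'} and the real bounds, that $\sum_j|\phi_j^n-\mathrm{id}|_{D_\varepsilon}\le Lb^n$ and $\sum_{I_j^n\in L_k^n}|q_j^n-\mathrm{id}|_{D_\varepsilon}\le H(1-b)^{k-1}$; then Lemma~\ref{'univmap'} in the appendix shows that each precomposition shrinks the available stadium by at most a factor $e^{-K|\cdot-\mathrm{id}|}$, so the total shrinkage is controlled by $e^{-K(\sum|\phi_j^n-\mathrm{id}|+\sum|q_j^n-\mathrm{id}|)}$, which is bounded below uniformly in $n$. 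Your proposal replaces all of this with the assertion ``the real bounds guarantee no loss of definite modulus,'' which is precisely the thing that needs a proof.

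Second, on the branch issue: you correctly identify that the pair formalism means only the principal branches of $z^\alpha$ and $(-z)^\alpha$ ever appear, but the point is sharper than ``holomorphic on any stadium $D_a$ with $a<1$.'' After the affine rescaling $Z_{I_j^n}$, the size of the stadium on which $q_j^n$ is univalent is governed by the distance of $I_j^n$ to the critical point $0$ (and to the boundary rays of the sector $S^\pm_\alpha$ when $\alpha\ge 2$), measured relative to $|I_j^n|$. The paper's Theorem~\ref{'domaindef'} establishes $\operatorname{dist}(0,x_j^n)/|I_j^n|>b/(2(1-b))$ via the real bounds, which is what produces a stadium of definite width $\epsilon_1$ for every $q_j^n$. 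Without such a lower bound the rescaled domain could degenerate, and your sketch does not supply it. These two ingredients — the definite relative gap from the critical point, and the summability of the deviations from the identity over the (exponentially many) factors — are the substance of the proof, and your proposal asserts their conclusions without giving the arguments.
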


\begin{remark}\label{remext}
Using methods of the proof of the Theorem~\ref{'restate1'} and
in  Sullivan\cite{Su} and Martens\cite{MM} it is possible to show that the first
component of the $C^3$ fixed points $(\phi^\star,t^\star)$ of $\widetilde{\mathcal{R}}_{\alpha}$
are indeed  analytic and univalent in a neighborhood of the interval
$[-1,1].$   By Theorem~\ref{'restate1'} we have complex bounds for the diffeomorphic  part of this fixed point.  Let $\delta<\delta_0/2$. By
Theorem~\ref{'restate1'} $(\phi^\star,t^\star) $ belongs to $\mathcal{H}_\alpha(C_0,2\delta, \infty)$. In the case when $\alpha \in 2\mathbb{N}$, Sullivan\cite{Su} methods implies that there exists such fixed point.  \end{remark}

Fix  $\delta$ such that $2\delta< \delta_0$.  Define $\tilde{N}_1=N_0(C_0,\delta)$, where $C_0$, $N_0$  is as in Theorem~\ref{'restate1'}.
Let  $(\phi,t) \in \mathcal{H}_\alpha(C_0,\delta, N_1+1)$  with
critical exponent $\alpha$ close enough to $\alpha_0$ and
let
$\I^{\widetilde{N}_1}=\{I_1^{\widetilde{N}_1},I_2^{\widetilde{N}_1},...,I_{q_{_{\widetilde{N}_1}}}^{\widetilde{N}_1}\}$
be the cycle corresponding to $\widetilde{N}_1$-th renormalization.
Then the maps
$\phi_0^{\widetilde{N}_1}=Z_{\phi^{-1}(I_1^{\widetilde{N}_1})}(\phi),$
$q_i^{\widetilde{N}_1}=Z_{I_i^{\widetilde{N}_1}}(q_t)$ and
$\phi_i^{\widetilde{N}_1}=Z_{q_t(I_i^{\widetilde{N}_1})}(\phi),$
have univalent extensions in complex domains such that the
composition
$$(\phi_{q_{\widetilde{N}_1}-1}^{\widetilde{N}_1}\circ q_{q_{_{\widetilde{N}_1}-1}}^{\widetilde{N}_1})\circ...
\circ(\phi_{2}^{\widetilde{N}_1}\circ
q_{2}^{\widetilde{N}_1})\circ(\phi_{1}^{\widetilde{N}_1}\circ
q_{1}^{\widetilde{N}_1})\circ \phi_{0}^{\widetilde{N}_1}$$ is
defined in $D_{2\delta}.$ Then it is possible to choose
$\widetilde{\gamma}_1>0$ small enough such that the operator
$\mathcal{\widetilde{R}}^{\widetilde{N}_1}_{\alpha}$ has a extension
to the ball
$$\widetilde{B}((\phi,t),\tilde{\gamma}_1):=\{(\psi,v)\in
\mathcal{A}_{D_{\delta/2}}\times
\mathbb{C},|(\psi,v)-({\phi}^{*},t^{*})|<\widetilde{\gamma}_1\},$$
as a transformation
$\mathcal{\widetilde{\widetilde{R}}}_{\alpha}:\widetilde{B}(({\phi},t),
\widetilde{\gamma}_1)\rightarrow \mathcal{A}_{D_{2\delta}}\times
\mathbb{C},$ defined by the expression
$$\mathcal{\widetilde{\widetilde{R}}}_{\alpha}(\phi,t)=
((\phi_{q_{_{\widetilde{N}_1}}-1}^{\widetilde{N}_1}\circ
q_{q_{_{\widetilde{N}_1}}-1}^{\widetilde{N}_1})\circ...
\circ(\phi_{2}^{\widetilde{N}_1}\circ q_{2}^{
N_2})\circ(\phi_{1}^{\widetilde{N}_1}\circ
q_{1}^{\widetilde{N}_1})\circ
\phi_{0}^{\widetilde{N}_1},t_{_{\widetilde{N}_1}}),
$$ where
$$t_{_{\widetilde{N}_1}}=\frac{|q_t(I_0^{\widetilde{N}_1})|}{|\phi^{-1}(I_1^{\widetilde{N}_1})|}.$$
So $\mathcal{\widetilde{\widetilde{R}}}_{\alpha}$ is defined in an open neighborhood of $\mathcal{H}_\alpha(C_0,\delta, N_1+1)$ in the space $\mathcal{A}_{D_{\delta/2}}\times \mathbb{C}$.  The natural inclusion $j:\mathcal{A}_{D_{2\delta}}\times
\mathbb{C}\rightarrow \mathcal{A}_{D_{\delta/2}}\times \mathbb{C}$
is a linear compact operator between Banach spaces.

\begin{definition}
The complex
renormalization operator, denoted by $\mathcal{\widetilde{R}},$ is defined
by
$$\mathcal{\widetilde{R}}_{\alpha}=j\circ \mathcal{\widetilde{\widetilde{R}}}_{\alpha}.$$
\end{definition}

\noindent Note that $\mathcal{\widetilde{R}}_{\alpha}$ is a compact operator.\\

\begin{theorem}[Hiperbolicity]\label{'fixedhyp'}
 Fix $r \in \mathbb{N}$. There exists $\eta>0$ and a real analytic map $\alpha\rightarrow
(\phi^*_{\alpha},t^*_{\alpha}),$ where $\alpha \in (2r-\eta,2r+
\eta),$ such that $(\phi^*_{\alpha},t^*_{\alpha})$ is a hyperbolic
fixed point to the operator $\widetilde{\mathcal{R}}_{\alpha},$ with
codimension one stable manifold.
\end{theorem}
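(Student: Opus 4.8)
The strategy is a standard perturbation argument around the known even-order case, using the fact — established in the complex bounds Theorem~\ref{'restate1'} together with Remark~\ref{remext} — that the relevant renormalization operator is a well-defined compact complex-analytic operator on a fixed Banach ball, and that it depends real-analytically on $\alpha$. First I would set $\alpha_0 = 2r$ and recall that for $\alpha = 2r$ the operator $\widetilde{\mathcal{R}}_{2r}$ (or an appropriate iterate, as in Theorem~A; I will suppress the iterate from the notation) coincides, via the composition operator $L$ of Section~5 and the holomorphic-function picture of Section~6, with Lyubich's analytic renormalization operator on the space of holomorphic unimodal maps of even critical order. By Lyubich~\cite{Lyu99} that operator has a fixed point $(\phi^*_{2r}, t^*_{2r})$ which is hyperbolic with a codimension-one stable manifold: its derivative $D\widetilde{\mathcal{R}}_{2r}$ at the fixed point is a compact operator whose spectrum consists of one eigenvalue of modulus $>1$ (the expanding direction, corresponding to the Feigenbaum eigenvalue $\delta$) and the rest of the spectrum strictly inside the unit disk, with $1$ not in the spectrum. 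The aim is to continue this picture to $\alpha$ near $2r$.

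The next step is to apply the analytic implicit function theorem in the Banach space $\mathcal{A}_{D_{\delta/2}}\times\mathbb{C}$ to the equation $\Phi(\phi,t,\alpha) := \widetilde{\mathcal{R}}_\alpha(\phi,t) - (\phi,t) = 0$. The operator $\widetilde{\mathcal{R}}_\alpha = j\circ\widetilde{\widetilde{\mathcal{R}}}_\alpha$ is compact and, by the discussion preceding the definition of the complex renormalization operator together with the real-analytic dependence of $q_t$ and of the renormalization construction on $\alpha$, the map $(\phi,t,\alpha)\mapsto\Phi(\phi,t,\alpha)$ is real-analytic on an open neighborhood of $(\phi^*_{2r},t^*_{2r},2r)$. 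At the point $\alpha = 2r$ the partial derivative $D_{(\phi,t)}\Phi = D\widetilde{\mathcal{R}}_{2r} - \mathrm{Id}$ is invertible precisely because $1\notin\mathrm{spec}(D\widetilde{\mathcal{R}}_{2r})$ — this is where hyperbolicity of Lyubich's fixed point, in particular the absence of a neutral eigenvalue, is essential. The implicit function theorem then yields $\eta>0$ and a real-analytic map $\alpha\mapsto(\phi^*_\alpha,t^*_\alpha)$ on $(2r-\eta,2r+\eta)$ solving $\widetilde{\mathcal{R}}_\alpha(\phi^*_\alpha,t^*_\alpha)=(\phi^*_\alpha,t^*_\alpha)$, with $(\phi^*_{2r},t^*_{2r})$ the given value at $\alpha=2r$.

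It remains to propagate hyperbolicity and the codimension-one stable manifold. Since $\alpha\mapsto D\widetilde{\mathcal{R}}_\alpha(\phi^*_\alpha,t^*_\alpha)$ is a real-analytic family of compact operators, and spectral data of a compact operator vary upper-semicontinuously, the eigenvalue of modulus $>1$ persists and stays $>1$ for $\eta$ small, while the part of the spectrum in $\{|z|<1\}$ stays in a slightly larger disk still of radius $<1$; in particular the spectrum stays off the unit circle and the splitting into one unstable and a codimension-one stable direction is preserved. Shrinking $\eta$ if necessary, one invokes the stable/unstable manifold theorem for the compact analytic map $\widetilde{\mathcal{R}}_\alpha$ at its hyperbolic fixed point to obtain a local codimension-one stable manifold; using invariance under $\widetilde{\mathcal{R}}_\alpha$ (and pulling back by iterates) one gets the global stable manifold. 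The main obstacle is bookkeeping rather than conceptual: one must verify carefully that the operator $\widetilde{\mathcal{R}}_\alpha$ really is jointly real-analytic in $(\phi,t,\alpha)$ — this rests on the fact that the intervals $I_i^{\widetilde{N}_1}$ of the fixed combinatorics $\sigma$ depend analytically on the data (no change of combinatorics occurs under small perturbation, by the complex bounds), so that the zoom operators $Z_{I_i}$ and hence the composition defining $\widetilde{\widetilde{\mathcal{R}}}_\alpha$ are analytic — and that the identification with Lyubich's operator at $\alpha=2r$, mediated by the operator $L$ of Section~5 and the analysis of Section~6, is precise enough to transport the spectral picture. Once those two points are in hand, the conclusion is the cited abstract perturbation-of-hyperbolic-fixed-point machinery.
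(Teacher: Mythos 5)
Your proposal follows essentially the same route as the paper: the paper also defines $F((\phi,t),\alpha)=\widetilde{\mathcal{R}}_\alpha(\phi,t)-(\phi,t)$, invokes Proposition~\ref{'newopersuave'} for the joint real-analyticity, notes that $\mathcal{D}\widetilde{\mathcal{R}}_{2r}(\phi^*_{2r},t^*_{2r})-\mathrm{Id}$ is invertible because the even-order fixed point is hyperbolic (which the paper establishes via the composition operator $L$ and Propositions~\ref{'spectro'} and~\ref{'dimautospace'} transferring the spectral picture from Lyubich/de Faria--de Melo--Pinto), and then applies the implicit function theorem. The one point you spell out that the paper leaves implicit is the persistence of hyperbolicity and of the codimension-one splitting under the real-analytic perturbation of a compact operator (upper semicontinuity of the spectrum, no eigenvalues crossing the unit circle for $\eta$ small) --- this is a genuine and correct completion of the argument.
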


The uniqueness of the fixed point to the operator
$\widetilde{\mathcal{R}}_{\alpha}$ is showed in the following
result. The proof will be postpone to the last section.

\begin{theorem}[Uniqueness of the fixed point]\label{uniqueness}Fix $r \in \mathbb{N}$.
For each $\alpha$ close to $2r,$ there exists an unique unimodal fixed
point $(\phi_{\alpha}^*,t_{\alpha}^*)$ of
$\mathcal{\widetilde{R}}_{\alpha},$ and it belongs to
$\mathcal{H}_{\alpha}(C_0,\delta_0,\infty).$
\end{theorem}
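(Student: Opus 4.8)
The plan is to combine the hyperbolicity statement of Theorem~\ref{'fixedhyp'} with the compactness of $\widetilde{\mathcal{R}}_\alpha$ and a perturbation argument in the real-analytic parameter $\alpha$, using the even case $\alpha = 2r$ (where uniqueness is classical, by Lyubich~\cite{Lyu99} and the relation between $\widetilde{\mathcal{R}}_\alpha$ and the usual operator $\mathcal{R}$ established in Sections~5--6) as an anchor. First I would fix $r\in\mathbb{N}$ and recall from Remark~\ref{remext} and Theorem~\ref{'restate1'} that for $\alpha$ close to $2r$ any $C^3$ unimodal fixed point of $\widetilde{\mathcal{R}}_\alpha$ automatically lies in $\mathcal{H}_\alpha(C_0,\delta_0,\infty)$ after finitely many renormalizations; hence it suffices to show uniqueness \emph{within} the compact set $\mathcal{H}_\alpha(C_0,\delta_0,\infty)$, where the complex operator $\widetilde{\mathcal{R}}_\alpha$ is defined and compact. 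So the problem is reduced to: the equation $\widetilde{\mathcal{R}}_\alpha(\phi,t)=(\phi,t)$ has a unique solution in this set.

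Next I would argue by contradiction using a compactness-and-limit argument. Suppose uniqueness fails: there is a sequence $\alpha_k\to 2r$ and, for each $k$, two distinct fixed points $(\phi_k^1,t_k^1)\neq(\phi_k^2,t_k^2)$ in $\mathcal{H}_{\alpha_k}(C_0,\delta_0,\infty)$. By the complex bounds (Theorem~\ref{'restate1'}) both sequences lie in a set of maps univalent and uniformly bounded on $D_{\delta_0}$, hence by Montel's theorem they are precompact in $\mathcal{A}_{D_{\delta}}\times\mathbb{C}$ for $\delta<\delta_0$; passing to a subsequence, $(\phi_k^i,t_k^i)\to(\phi_\infty^i,t_\infty^i)$. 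Because $\alpha\mapsto\widetilde{\mathcal{R}}_\alpha$ is a (real-)analytic, in particular continuous, family of operators on this neighborhood, each limit $(\phi_\infty^i,t_\infty^i)$ is a fixed point of $\widetilde{\mathcal{R}}_{2r}$; by the known uniqueness at $\alpha=2r$ both limits coincide with the Lyubich--Feigenbaum fixed point $(\phi^*_{2r},t^*_{2r})=(\phi^*,t^*)$. Thus both sequences converge to the same hyperbolic fixed point supplied by Theorem~\ref{'fixedhyp'}.

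Now I would invoke hyperbolicity to produce the contradiction. By Theorem~\ref{'fixedhyp'}, for $\alpha$ in a neighborhood of $2r$ the operator $\widetilde{\mathcal{R}}_\alpha$ has a hyperbolic fixed point $(\phi^*_\alpha,t^*_\alpha)$ depending analytically on $\alpha$; since the derivative $D\widetilde{\mathcal{R}}_{2r}$ at $(\phi^*,t^*)$ has no eigenvalue on the unit circle and $\alpha\mapsto D\widetilde{\mathcal{R}}_\alpha(\phi^*_\alpha,t^*_\alpha)$ is continuous, for $\alpha$ close to $2r$ the spectrum of $D\widetilde{\mathcal{R}}_\alpha$ at $(\phi^*_\alpha,t^*_\alpha)$ still avoids the unit circle. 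In particular $1$ is not an eigenvalue, so $\mathrm{Id}-D\widetilde{\mathcal{R}}_\alpha$ is invertible at $(\phi^*_\alpha,t^*_\alpha)$, and by the inverse/implicit function theorem the fixed point $(\phi^*_\alpha,t^*_\alpha)$ is \emph{isolated}: there is a ball $\widetilde{B}((\phi^*_\alpha,t^*_\alpha),\rho_\alpha)$ containing no other fixed point, and by continuity of the construction one can take $\rho_\alpha\geq\rho>0$ uniform for $\alpha$ near $2r$. But for $k$ large both $(\phi_k^1,t_k^1)$ and $(\phi_k^2,t_k^2)$ lie in this ball (they converge to $(\phi^*,t^*)$, and $(\phi^*_{\alpha_k},t^*_{\alpha_k})\to(\phi^*,t^*)$ as well), so by isolation they both equal $(\phi^*_{\alpha_k},t^*_{\alpha_k})$, contradicting $(\phi_k^1,t_k^1)\neq(\phi_k^2,t_k^2)$. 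Hence for all $\alpha$ sufficiently close to $2r$ the fixed point is unique, and it is the analytic branch $(\phi^*_\alpha,t^*_\alpha)$ of Theorem~\ref{'fixedhyp'}, which by Remark~\ref{remext} and Theorem~\ref{'restate1'} lies in $\mathcal{H}_\alpha(C_0,\delta_0,\infty)$.

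The main obstacle I anticipate is the very first reduction: showing that an \emph{arbitrary} $C^3$ unimodal fixed point — not a priori known to be infinitely renormalizable with the right combinatorics, nor a priori analytic — must in fact be infinitely renormalizable of type $\sigma$ and hence enter $\mathcal{H}_\alpha(C_0,\delta_0,\infty)$ so that the complex operator and its compactness apply. This requires the regularity bootstrap alluded to in Remark~\ref{remext} (Sullivan's pullback/quasiconformal-surgery argument together with Martens' real bounds), and it is where the combinatorial hypothesis that $\sigma$ is periodic and the bounded-combinatorics hypothesis are really used; the subsequent compactness-plus-hyperbolicity argument is comparatively soft.
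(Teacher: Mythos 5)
Your proposal is correct and follows essentially the same route as the paper's proof: argue by contradiction along a sequence $\alpha_s\to 2r$, use the complex bounds to get precompactness and pass to a limit fixed point of $\widetilde{\mathcal{R}}_{2r}$, identify it with $(\phi^*_{2r},t^*_{2r})$ by uniqueness in the even case, and then invoke hyperbolicity (local uniqueness from the implicit function theorem) to derive the contradiction. The only cosmetic difference is that you posit two distinct fixed points per $\alpha_k$ while the paper posits one fixed point outside the isolation neighborhood $V_1$; the mechanism is the same.
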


We define the stable manifold of the fixed point
$(\phi_{\alpha}^*,t_{\alpha}^*),$ denoted by
$W^s=W^s(\phi_{\alpha}^*,t_{\alpha}^*),$ as the set
$$W^s:=\{ (\phi,t):\mathcal{\widetilde{R}}^n(\phi,t)\rightarrow
(\phi_{\alpha}^*,t_{\alpha}^*)\}$$ By
$$\mathcal{\widetilde{R}}_{\alpha}^n(\phi,t)\rightarrow_n
(\phi_{\alpha}^*,t_{\alpha}^*)$$ we say that
$\mathcal{\widetilde{R}}_{\alpha}^n(\phi,t)$ belongs to
$\mathcal{A}_{D_\delta}\times \mathbb{C}$ for $n$ large enough, and
$\mathcal{\widetilde{R}}_{\alpha}^n(\phi,t)$ converges to
$(\phi_{\alpha}^*,t_{\alpha}^*)$ in $\mathcal{A}_{D_\delta}\times
\mathbb{C}$.

Let $V_1$ be a neighborhood of the fixed point
$(\phi_{\alpha}^*,t_{\alpha}^*)$ of the operator
$\mathcal{\widetilde{R}}_{\alpha}.$ Define
$$\mathcal{N}^k_{V_1}(\phi_{\alpha}^*,t_{\alpha}^*):=\{(\phi,t):\mathcal{\widetilde{R}}^j(\phi,t)\in
V_1, 0\leq j\leq k\}\},$$ where $k \in \mathbb{N}\cup \infty.$
Furthermore, we define
$$\mathcal{N}^{\infty}_{V_1}(\phi_{\alpha}^*,t_{\alpha}^*)
:=\bigcap_{k \in
\mathbb{N}}\mathcal{N}^k_{V_1}(\phi_{\alpha}^*,t_{\alpha}^*),$$ this
is, the set of pairs such that all your iterates stay close to
$(\phi_{\alpha}^*,t_{\alpha}^*).$ So we are ready to define the
local stable manifold.
\begin{definition}
For each $V$ neighborhood of $(\phi_{\alpha}^*,t_{\alpha}^*)$ we
define the corresponding local stable manifold to be
$$W^s_{V_1}(\phi_{\alpha}^*,t_{\alpha}^*):=W^s(\phi_{\alpha}^*,t_{\alpha}^*)
\cap\mathcal{N}^k_{V_1}(\phi_{\alpha}^*,t_{\alpha}^*).$$ As
$(\phi_{\alpha}^*,t_{\alpha}^*)$ is a hyperbolic fixed point, we can
choose $V_1$ such that
\begin{equation}\label{stable} W^s_{V_1}(\phi_{\alpha}^*,t_{\alpha}^*)
:=\mathcal{N}^{\infty}_{V_1}(\phi_{\alpha}^*,t_{\alpha}^*).\end{equation}
\end{definition}

Other important result is the
universality for infinitely renormalizable pairs.

\begin{theorem}[Universality]\label{variedade} Fix $r \in \mathbb{N}$.
For $\alpha$ close to $2r$ we have that all unimodal pairs $(\phi,t)$,
infinitely renormalizable with combinatorics $\sigma$ and order
$\alpha$, belongs to the stable manifold of the unique, unimodal
fixed point $(\phi_\alpha^*,t_\alpha^*)$ of
$\tilde{\mathcal{R}}_\alpha$, this is
\begin{eqnarray*}
\bigcup_{C>0}\bigcup_{\eta>0}\mathcal{H}_{\alpha}(C,\eta,\infty)\subseteq
W^s(\phi_{\alpha}^*,t_{\alpha}^*).
\end{eqnarray*}
\end{theorem}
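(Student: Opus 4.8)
The plan is to deduce the result from the complex a priori bounds of Theorem~\ref{'restate1'} together with the hyperbolicity and uniqueness of the fixed point (Theorems~\ref{'fixedhyp'} and~\ref{uniqueness}): the bounds confine the tail of the renormalization orbit of any infinitely renormalizable unimodal pair to a fixed \emph{precompact} set, and the problem then reduces to identifying the corresponding $\omega$-limit set with $(\phi_\alpha^*,t_\alpha^*)$. So let $(\phi,t)\in\mathcal{H}_{\alpha}(C,\eta,\infty)$ with $\alpha$ close to $2r$. By Theorem~\ref{'restate1'} there is $N_0=N_0(C,\eta)$ with $\widetilde{\mathcal{R}}_\alpha^n(\phi,t)\in\mathcal{H}_{\alpha}(C_0,\delta_0,\infty)$ for all $n\ge N_0$; since $W^s(\phi_\alpha^*,t_\alpha^*)$ is $\widetilde{\mathcal{R}}_\alpha$-invariant and is defined through the $\omega$-limit, I may replace $(\phi,t)$ by $p_0:=\widetilde{\mathcal{R}}_\alpha^{N_0}(\phi,t)\in\mathcal{H}_{\alpha}(C_0,\delta_0,\infty)$. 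Fixing $\delta$ with $2\delta<\delta_0$, the set $\mathcal{H}_{\alpha}(C_0,\delta_0,\infty)$, viewed inside $\mathcal{A}_{D_{\delta}}\times\mathbb{C}$, is precompact: maps univalent on $D_{\delta_0}$, real on the real line and fixing $\pm1$ are locally uniformly bounded, hence normal, and the restriction $\mathcal{A}_{D_{2\delta}}\to\mathcal{A}_{D_{\delta}}$ is compact (this is the mechanism behind the compactness of $j$). Consequently $\Omega:=\bigcap_{m\ge0}\overline{\{\widetilde{\mathcal{R}}_\alpha^n p_0:n\ge m\}}$ is nonempty and compact, and, $\widetilde{\mathcal{R}}_\alpha$ being continuous on a neighbourhood of $\mathcal{H}_{\alpha}(C_0,\delta_0,\infty)$, it is totally invariant, $\widetilde{\mathcal{R}}_\alpha(\Omega)=\Omega$. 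It suffices to prove $\Omega=\{(\phi_\alpha^*,t_\alpha^*)\}$.

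Next I would check that every element of $\Omega$ is itself an infinitely renormalizable unimodal pair with combinatorics $\sigma$. The only issue is that renormalizability must not degenerate under locally uniform limits, and this is precisely what the complex bounds supply: the annuli around the intervals $I^n_i$ have definite moduli, so the repelling periodic points, their multipliers (bounded away from $1$) and the relative sizes and positions of the $I^n_i$ all survive in the limit; together with $|\psi|_{C^3([-1,1])}\le C_0$ and univalence on $D_{\delta_0}$ (Hurwitz) this gives $\Omega\subseteq\mathcal{H}_{\alpha}(C_0,\delta_0,\infty)$. By Theorem~\ref{uniqueness} and Remark~\ref{remext}, $(\phi_\alpha^*,t_\alpha^*)$ lies in the same set.

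The crux is to prove $\Omega=\{(\phi_\alpha^*,t_\alpha^*)\}$, which is a rigidity (exponential convergence) statement; I would obtain it by perturbation from $\alpha=2r$. At $\alpha=2r$, through the composition operator $L$ and the comparison between $\widetilde{\mathcal{R}}_{2r}$ and the usual analytic renormalization operator, one has $\mathcal{H}_{2r}(C_0,\delta_0,\infty)\subseteq W^s(\phi_{2r}^*,t_{2r}^*)$ by Lyubich~\cite{Lyu99} (and de~Faria--de~Melo--Pinto~\cite{E-W-P}); and since the forward iterates of a hyperbolic fixed point converge to it uniformly on compact subsets of its stable manifold, any compact totally invariant subset of $\mathcal{H}_{2r}(C_0,\delta_0,\infty)$ equals $\{(\phi_{2r}^*,t_{2r}^*)\}$. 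Now suppose the theorem failed along a sequence $\alpha_k\to2r$: there would be infinitely renormalizable unimodal pairs $(\phi_k,t_k)$ of order $\alpha_k$ and combinatorics $\sigma$ whose $\omega$-limit sets $\Omega_{\alpha_k}$ contain points $q_k$ with $d(q_k,(\phi_{\alpha_k}^*,t_{\alpha_k}^*))\ge\varepsilon_0>0$. Each $q_k$ has a full backward $\widetilde{\mathcal{R}}_{\alpha_k}$-orbit inside $\mathcal{H}_{\alpha_k}(C_0,\delta_0,\infty)$, a set precompact uniformly for $\alpha$ near $2r$ (uniformity of $C_0$, $\delta_0$ in Theorem~\ref{'restate1'}); a diagonal extraction, using the joint continuity of $(\alpha,\cdot)\mapsto\widetilde{\mathcal{R}}_\alpha$ (the family is real analytic) and the closedness noted in the previous paragraph, produces $q_\infty\in\mathcal{H}_{2r}(C_0,\delta_0,\infty)$ with a full two-sided $\widetilde{\mathcal{R}}_{2r}$-orbit in $\mathcal{H}_{2r}(C_0,\delta_0,\infty)$ and $d(q_\infty,(\phi_{2r}^*,t_{2r}^*))\ge\varepsilon_0$, contradicting the previous sentence. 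Hence, for $\alpha$ close to $2r$, $\Omega$ lies in any prescribed neighbourhood $V_1$ of $(\phi_\alpha^*,t_\alpha^*)$; taking $V_1$ as in~\eqref{stable}, every forward iterate of every point of $\Omega$ stays in $V_1$, so $\Omega\subseteq\mathcal{N}^\infty_{V_1}(\phi_\alpha^*,t_\alpha^*)=W^s_{V_1}(\phi_\alpha^*,t_\alpha^*)$, on which $\widetilde{\mathcal{R}}_\alpha$ contracts towards the fixed point with some rate $\mu<1$. Then for $x\in\Omega$, writing $x=\widetilde{\mathcal{R}}_\alpha^m x_{-m}$ with $x_{-m}\in\Omega\subseteq V_1$, we get $d(x,(\phi_\alpha^*,t_\alpha^*))\le\mu^m\,\mathrm{diam}\,V_1\to0$, so $x=(\phi_\alpha^*,t_\alpha^*)$. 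Thus $\Omega=\{(\phi_\alpha^*,t_\alpha^*)\}$, hence $\widetilde{\mathcal{R}}_\alpha^n(\phi,t)\to(\phi_\alpha^*,t_\alpha^*)$ and $(\phi,t)\in W^s(\phi_\alpha^*,t_\alpha^*)$, which is the assertion.

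The main obstacle is this last step. Theorem~\ref{'restate1'} only confines the orbit tail to the \emph{fixed} precompact set $\mathcal{H}_{\alpha}(C_0,\delta_0,\infty)$, not to a neighbourhood of $(\phi_\alpha^*,t_\alpha^*)$, so some genuine rigidity input is unavoidable; the perturbation argument imports it from $\alpha=2r$ at the price of invoking the classical analytic theory through the operator $L$. An alternative would be to prove exponential convergence directly for $\alpha$ near $2r$ by running the Sullivan--McMullen--Lyubich pullback argument on top of the complex a priori bounds of Theorem~\ref{'restate1'}, which is more self-contained but substantially heavier. A secondary technical point, present in either route, is the verification that the cycle structure does not degenerate under the limits taken in the second and third paragraphs, for which the definite moduli furnished by Theorem~\ref{'restate1'} are essential.
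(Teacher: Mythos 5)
Your overall architecture—confine the renormalization tail to the precompact set $\mathcal{H}_\alpha(C_0,\delta_0,\infty)$, pass to the $\omega$-limit set $\Omega$, show $\Omega$ degenerates to the fixed point by perturbing from $\alpha=2r$, then finish with local hyperbolicity—is the same template the paper uses (see Theorems~\ref{'convergencia'}, \ref{'contuniforme'}, and \ref{'manifold'}, which also extract limits from the precompact family and transfer information from $\alpha=2r$). But there is a genuine gap at the step you make the whole reduction hinge on: the assertion that ``since the forward iterates of a hyperbolic fixed point converge to it uniformly on compact subsets of its stable manifold, any compact totally invariant subset of $\mathcal{H}_{2r}(C_0,\delta_0,\infty)$ equals $\{(\phi^*_{2r},t^*_{2r})\}$.'' That is not a consequence of abstract hyperbolicity. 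For a hyperbolic fixed point of a map on a Banach space, the \emph{global} stable set $W^s$ (all points whose orbits converge to $p$) is not a priori the image of a nice invariant manifold on ambient compacta: it can, in principle, contain a homoclinic orbit together with $p$, which is a nontrivial compact totally invariant subset of $W^s$; and convergence of iterates on an ambient-compact subset of $W^s$ is not automatically uniform, precisely because the ``entry time'' into $W^s_{V_1}$ can be unbounded on such a set. Your perturbation argument only produces a contradiction if this claim holds; it therefore does not close.

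What actually rules out such behavior at $\alpha=2r$ is the quasiconformal rigidity of the renormalization attractor: the pullback argument together with the absence of invariant line fields on the Julia set, which is exactly what the paper proves in Theorem~\ref{'contuniforme'} (equicontinuity) and combines with Theorem~\ref{'convergencia'} to obtain Theorem~\ref{'manifold'}. You mention the Sullivan--McMullen--Lyubich pullback as an ``alternative'' and ``substantially heavier'' route, but it is not optional here: it is the indispensable input hiding under your uniform-convergence claim, and the paper's proof runs through it. With Theorem~\ref{'contuniforme'} in hand (equicontinuity of the iterated renormalization on $\mathcal{H}_{2r}(C_0,\delta_0,\infty)$), your extraction and perturbation steps do go through and yield the statement; without it, the ``previous sentence'' you appeal to is unproved. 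A smaller point: your precompactness and Hurwitz-type arguments showing $\Omega\subseteq\mathcal{H}_\alpha(C_0,\delta_0,\infty)$ are fine in spirit, but the paper organizes this through Corollaries~\ref{'Rcomplexconseq'} and~\ref{'restatecomplex'}; you should cite those rather than re-deriving them.
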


\section{Real and complex a priori bounds}

We will present the main tool for the development of this work,
the called complex bounds: there exists a complex domain $V\supset
[-1,1]$ such that for $n$ big enough the first component of
$\mathcal{\widetilde{R}}^n(\phi,t),$ where $(\phi,t)\in \mathcal{U}$
satisfying appropriated conditions, is well defined and univalent in
$V.$

The complex bounds has a lot applications in the study of the
renormalization operator $\mathcal{R}_{2r},$ $r\in \mathbb{N}.$ One
of the most important applications is the convergence of the
renormalization operator in the set of the infinitely
renormalization maps and the hyperbolicity of this operator in an
appropriate space. Sullivan~\cite{Su} introduced this
property for the infinitely renormalization maps with bounded
combinatorics.

Others related results about infinitely
renormalizable unimodal maps with no bounded combinatorics, were given by
Lyubich~\cite{Lyu97}, Lyubich e Yampolsky~\cite{Lyu-Yamp}, Graczyk e
Swiatek~\cite{J-G}, Levin e van Strien~\cite{Lv-S}. For multimodal
analytic maps, infinitely renormalizable with bounded combinatorics
Smania~\cite{D1} proved ``complex bounds".

We obtain complex bounds for the first component of
the renormalization operator $\mathcal{\widetilde{R}}_{\alpha}$
which is a univalent map. This tool is useful because it
allow us to define the complex renormalization operator
$\widetilde{R}_{\alpha},$ where the critical exponent is $\alpha>1.$

A main  ingredient in the proof of the Complex Bounds's Theorem
is given in the following lemma where we establish real bounds.
We use this to obtain control on the geometry of the cycles of pairs
$N$-times renormalizables, for $N$ enough big, with bounded
combinatorics by a constant $B>0.$

For a proof of the real bounds see~\cite[Theorem~$2.1$, Chapter VI]{W-V}. First fix the
critical exponent $\alpha>1.$

\begin{lemma}[Real bounds]~\cite{W-V}\label{'aprioribound'}
Let $B>0$ be a constant.  Then there exists $0<b<1 $ with the
following property: for all $C>0,$ there exists $N=N(B,C)\geq1$ such
that if $(\phi,t)\in \mathcal{U}$ is $M$-times renormalizable with
bounded combinatorics by  $B$ with $M>N,$ and
$|\phi|_{\mathrm{C}^3([-1,1])}\leq C,$ we have that
\begin{enumerate}
  \item if $I^{n+1}_{i_l}\subset I^{n}_{j},$ $l=1,\cdots,m_n$ are
  the intervals of the $(n+1)$-th renormalization cycle, where $N\leq n<M,$
  contained in the interval $I^{n}_{j}$ of the $n$-th cycle then
\begin{eqnarray}
  b<\frac{|I^{n+1}_{i_l}|}{|I^{n}_{j}|}<1-b,
\end{eqnarray}
where $l=1,\cdots,m_n,$ for all $N\leq n<M.$
  \item if $J$ is a connected component of $I^{n}_{j}\setminus
  \bigcup_{l=1}^{m_n}I^{n+1}_{i_l}$ then
\begin{eqnarray}
  b<\frac{|J|}{|I^{n}_{j}|}<1-b,
\end{eqnarray}
for all $N\leq n<M.$
\end{enumerate}

\end{lemma}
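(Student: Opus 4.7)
The plan is to invoke the classical real-bounds machinery of de Melo and van Strien, adapted to the present setting. First I would observe that $f = \phi \circ q_t$ is a $C^3$ unimodal map on $[-1,1]$ with a single nonflat critical point at the origin of order $\alpha>1$, and that the hypothesis $|\phi|_{C^3([-1,1])} \le C$ combined with the explicit form of $q_t$ gives uniform $C^3$ control on $f$ away from the critical point and a precise description of $f$ near $0$. Renormalizability of the pair $(\phi,t)$ is equivalent to renormalizability of $f$ in the classical sense, and the combinatorial bound $q_{n+1}/q_n \le B$ matches the usual notion of combinatorics bounded by $B$ used in \cite{W-V}.

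The main tool is the Koebe distortion principle applied to compositions of branches of iterates of $f$ that stay away from a neighborhood of the critical point. For $n \ge N$ and $I_j^n \in \mathcal{I}^n$, the subintervals $I_{i_l}^{n+1} \subset I_j^n$ arise as images of pullback branches of the renormalized dynamics; the purely diffeomorphic pullbacks inherit definite Koebe space from the cycle structure and hence have bounded distortion, so any ratio of interval lengths is preserved up to a multiplicative constant along them. For the unique branch meeting the critical point, one uses the explicit form $q_t(x) = -2t|x|^{\alpha}+2t-1$ to convert length ratios on one side of $0$ into length ratios on the other. Combining these estimates yields (1); the bounds (2) follow by applying the same reasoning to the components of $I_j^n \setminus \bigcup_l I_{i_l}^{n+1}$, which are themselves pullbacks of gaps between intervals of a lower level.

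The inductive step from level $n$ to level $n+1$ is controlled because bounded combinatorics ensures that only boundedly many branches are composed between consecutive renormalizations, so the multiplicative Koebe constants accumulate by a factor depending only on $B$. Hence once the bounds are established at a single sufficiently deep level $N = N(B,C)$, they propagate to every $n$ with $N \le n < M$.

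The main obstacle, and the only nonroutine point, is producing the base case: the existence of the threshold $N = N(B,C)$ at which genuine real bounds first hold. Classically this rests on two nontrivial ingredients, both established for $C^2$ nonflat unimodal maps in \cite[Chapter~VI]{W-V}: the absence of wandering intervals, which forces the renormalization cycles to shrink in a controlled way, and the improvement of cross-ratio distortion under pullbacks by $f$, which quantifies the gain of geometric control with depth. Given these two inputs, the proof reduces to the argument sketched above. Since everything is already carried out in detail in \cite[Theorem~$2.1$, Chapter~VI]{W-V}, my proposal is to simply cite that result, noting that the hypothesis $|\phi|_{C^3}\le C$ is precisely what is needed for it to apply uniformly over the class $\mathcal{H}_\alpha(C,\eta,M)$.
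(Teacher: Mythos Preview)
Your proposal is correct and matches the paper's treatment exactly: the paper does not prove this lemma but simply refers the reader to \cite[Theorem~2.1, Chapter~VI]{W-V}, which is precisely what you conclude should be done. Your added sketch of the Koebe/cross-ratio argument and the role of the $C^3$ bound is accurate and more informative than what the paper itself provides.
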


\begin{remark}\label{'boundperturb'}
Let $\alpha>1$ and $\delta>0$ small enough. We can suppose that a
constant $b<1$ of the Lemma~\ref{'aprioribound'} is the same for the
pairs $(\phi,t)\in \mathcal{U},$ $M$-times renormalizables with
bounded combinatorics by $B,$ for $M>N$ big enough, with critical
order $\widetilde{\alpha},$ where
$|\alpha-\widetilde{\alpha}|<\delta.$
\end{remark}

So we can establish the following result.

\begin{theorem}\label{'domaindef'}
Let $B>0$, $\alpha>1,$ and $C>0.$ There exists $\delta>0$ and
$\varepsilon=\varepsilon(\delta)>0,$ such that the following is
satisfied: for each complex domain $V$ containing the interval
$[-1,1]$ there exists $N=N(B,\alpha,C,V)\geq1$ such that
\begin{itemize}
 \item if $(\phi,t)\in \mathcal{U}$ is $M$-times renormalizable with bounded combinatorics by
 $B,$ for $M>N,$ with critical order
 $\widetilde{\alpha},$ $|\alpha-\widetilde{\alpha}|<\delta$
 and $\phi$ univalent map defined on $V,$ and
 \item $|\phi|_{\mathrm{C}^3([-1,1])}\leq C,$
\end{itemize}
Then for all $ N\leq n< M$ the maps
$\phi_0^n=Z_{\phi^{-1}(I_{1}^n)}(\phi),$ $q_j^n=Z_{I_j^n}(q_t)$ and
$\phi_j^n=Z_{\phi^{-1}(I_{j+1}^n)}(\phi),$ where $j=1,\cdots,q_n-1,$
have univalent extensions for maps defined on a
$\varepsilon$-stadium $D_{\varepsilon}.$
\end{theorem}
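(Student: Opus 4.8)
The plan is to bootstrap the real bounds of Lemma~\ref{'aprioribound'} into a statement about complex domains, using the fact that the building blocks of the renormalization operator are compositions of univalent maps and that a univalent map on a large domain contracts hyperbolic metrics of nested domains by a definite factor. First I would fix $\delta>0$ small enough that Remark~\ref{'boundperturb'} applies, so that there is a single constant $0<b<1$ serving all critical orders $\widetilde\alpha$ with $|\alpha-\widetilde\alpha|<\delta$; let $N_1=N_1(B,C)$ be the threshold from the real bounds. The constant $\varepsilon=\varepsilon(\delta)$ will come out of the geometric estimate below; a priori it may also depend on $b$, but $b$ depends only on $B$, so this is harmless.

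The core geometric step: I would show that if $I\subset[-1,1]$ is an interval and $\psi\colon V\to\C$ is univalent with $\psi$ real on the real line and $\psi(I)$ an interval satisfying the real bounds relative to its parent (so $\psi(I)$ is a definitely-sized and definitely-internal subinterval of $[-1,1]$), then the zoom $Z_I(\psi)=A_{\psi(I)}\circ\psi\circ A_I^{-1}$ extends univalently to a stadium $D_{\varepsilon_0}$, where $\varepsilon_0$ depends only on $b$ and on the size of $V$. The mechanism is the Koebe/Schwarz-lemma estimate in the hyperbolic metric: the stadium $D_{\varepsilon_0}([-1,1])$ sits inside the domain $A_I(V)$ once $I$ occupies a fraction of $[-1,1]$ bounded below by $b$-dependent constants (this is exactly where the real bounds are used — they guarantee every interval $I_j^n$, and every $\phi^{-1}(I_{j+1}^n)$, has commensurable size and is well inside its parent), and then $Z_I(\psi)$, being a rescaled restriction of a univalent map, maps $D_{\varepsilon_0}$ into a slightly larger stadium. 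The affine normalization is the only subtlety in checking $\psi^{-1}(I_1^n)$ and $\phi^{-1}(I_{j+1}^n)$ have comparable size to $I_j^n$: since $\phi$ has bounded $C^3$ norm and $\phi\in\diff$, its derivative is bounded above and below on $[-1,1]$, so pulling back by $\phi$ distorts lengths by a bounded factor, and part (1) of Lemma~\ref{'aprioribound'} survives.

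Having established that a single zoom extends to a fixed stadium, I would run the induction on $n$ from $N$ to $M-1$. At level $n$ the cycle $\I^n$ has intervals $I_j^n$; applying the geometric step to each $\phi_i^n$, $q_i^n$, $\phi_0^n$ gives univalent extensions to $D_{\varepsilon_0}$. Choosing $\varepsilon<\varepsilon_0$ strictly, one has $D_{\varepsilon_0}\supset\overline{D_\varepsilon}$ with room to spare, and the final composition of these maps, read in the right order as in~\eqref{'formulanovoR'}, is then univalent on $D_\varepsilon$: each factor maps $D_\varepsilon$ compactly inside the domain $D_{\varepsilon_0}$ of the next, because the real bounds force the images to shrink geometrically back toward $[-1,1]$. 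The threshold $N=N(B,\alpha,C,V)$ is taken to be the maximum of the real-bounds threshold $N_1(B,C)$ and whatever is needed to guarantee that after $N$ steps the cycle intervals are short enough that $A_{I_j^n}(V)$ engulfs $D_{\varepsilon_0}$ — this is the only place the dependence on $V$ enters.

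The main obstacle I expect is the \emph{uniformity} of $\varepsilon$ in the number of compositions: a naive argument loses a factor of domain size at each of the $q_n$ compositions, and $q_n\to\infty$. The resolution is that one must not iterate the "slightly larger stadium" estimate blindly; instead one uses that the \emph{images} of consecutive cycle intervals are disjoint (property (4) of a cycle) and uniformly deep inside $[-1,1]$, so the relevant hyperbolic contraction is by a fixed factor $<1$ at every step and the composition of many contractions only \emph{helps}. Making this precise — essentially a telescoping estimate in the hyperbolic metric of $D_{\varepsilon_0}([-1,1])$, of the kind used by Sullivan and in Smania~\cite{D1} — is the technical heart of the argument, and is exactly what Lemma~\ref{'aprioribound'} was isolated to feed into.
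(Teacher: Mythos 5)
Your proposal misidentifies the main technical obstacle and, as a consequence, spends most of its effort on the wrong difficulty. The central problem in establishing that $q_j^n = Z_{I_j^n}(q_t)$ extends univalently to a stadium is \emph{not} a Koebe-type distortion estimate for a univalent map: the map $q_t(x) = -2t|x|^{\widetilde\alpha} + 2t - 1$ is not even analytic at $0$ when $\widetilde\alpha\notin 2\N$, so there is a genuine branch singularity to avoid. The paper's proof deals with this head on: it chooses a holomorphic branch $q_t^{\pm}(z) = -2te^{\widetilde\alpha\log(\pm z)} + 2t - 1$ on a sector $S_{\widetilde\alpha}^{\pm} = \{|\arg(\pm z)| < \pi/\widetilde\alpha\}$, and then must show that $I_j^n$ (for $j\neq 0$, which does not contain $0$) is separated from $0$ by a definite multiple of its own length, $\mathrm{dist}(0,x_j^n)/|I_j^n| > b/\bigl(2(1-b)\bigr)$. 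This is where the real bounds are actually consumed: the paper splits into cases according to the first level $n-k$ at which $I_0^{n-k}\supset I_j^n$, and derives the lower bound from the $b$-commensurability of nested cycle intervals. Only after this does the geometric factor $\sin(\pi/(\alpha-\delta))$ enter (for $\alpha\geq 2$) to convert the radial separation into separation from the sector boundary. Your proposal says nothing about the branch point of $q_t$, treats $q_j^n$ on the same footing as the genuinely univalent $\phi_j^n$, and uses the real bounds only in the much weaker form ``$I_j^n$ is well inside its parent,'' which is not the estimate needed.

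A secondary point: your final paragraph, on keeping $\varepsilon$ uniform across the $q_n\to\infty$ compositions via hyperbolic contraction, addresses a different statement. Theorem~\ref{'domaindef'} asserts only that each individual map $\phi_0^n$, $q_j^n$, $\phi_j^n$ extends univalently to a \emph{single} stadium $D_\varepsilon$ uniformly in $n,j$; the uniform control of the $q_n$-fold composition is the content of Theorem~\ref{'restate1'} and is achieved there by a separate telescoping estimate (via Lemma~\ref{'univmap'} together with the summable bounds of Proposition~\ref{'rembound'}), not in the proof of Theorem~\ref{'domaindef'}. So your proposal both omits the essential sector/branch analysis that the theorem requires and imports a difficulty that belongs to a later result.
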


\begin{proof}
Let $\delta>0$ and $N\geq 1$ be as in the
Observation~\ref{'boundperturb'}. Consider the sectors in the
complex plane denoted by
$$S_{a}^+=\{ z\in \mathbb{C}: |arg (z)|<\frac{\pi}{a}\}$$
and
$$S_{a}^-=\{ z\in \mathbb{C}: |arg
(-z)|<\frac{\pi}{a}\},$$ where $a>0.$ Suppose that the pair
$(\phi,t)\in \mathcal{U}$ satisfies the hypothesis of the theorem.
We fix $\I ^n=\{I_1^{n},I_2^{n},...,I_q^{n} \}$ the corresponding
cycle to the $n$-th renormalization, $N\leq n\leq M.$ We denote by
$x_j^n$ the boundary point of the interval $I_j^n,$ $j\neq 0,$
nearest to the critical point. There is a level of renormalization
between $N$ and $n$ such that the interval containing the critical
point in this level contains the interval $I_j^n.$ So choose the
first $k>0$ such that $I_0^{n-k}\supset I_j^n,$ where $N\leq n-k<n.$
Then $I_j^n$ is not containing in $I_0^{n-k+1}.$  We have two cases:\\
{\it Case I}. First when $k=1$. By Lemma~\ref{'aprioribound'} we
obtain
\begin{eqnarray*}\frac{dist(0,x_j^n)}{|I_j^n|}&\geq&\frac{|I_0^{n}|}{2|I_j^n|}\\&=&\frac{|I_0^{n}|}{2|I_0^{n-1}|}
\cdot\frac{|I_0^{n-1}|}{|I_j^n|}\\&>&\frac{b}{2}\cdot\frac{1}{(1-b)}.
\end{eqnarray*}
{\it Case II}. For $k>1$. We can see that $I_j^n$ is contained in
some interval $I_{j(n-k+1)}^{n-k+1}\subset I_0^{n-k}-I_0^{n-k+1}.$
Actually the interval $I_j^n$ in contained in a nesting sequence of
intervals of deeper levels. So $I_j^n\subset
I_{j(n-1)}^{n-1}\subset\cdots I_{j(n-k+2)}^{n-k+2}\subset
I_{j(n-k+1)}^{n-k+1}$ and by the Lemma~\ref{'aprioribound'} we have
\begin{eqnarray}
|I_j^n|<(1-b)^{k-1}|I_{j(n-k+1)}^{n-k+1}|\leq
\frac{(1-b)^{k-1}}{2}\cdot(|I_0^{n-k}|-|I_0^{n-k+1}|).\label{'eqbound'}
\end{eqnarray}
From Eq.(~\ref{'eqbound'}) we obtain
\begin{eqnarray*}
\frac{dist(0,x_j^n)}{|I_j^n|}&\geq&\frac{|I_0^{n-(k-1)}|}{2|I_j^n|}\\&>&
\frac{1}{(1-b)^{k-1}}\cdot\frac{|I_0^{n-(k-1)}|}{|I_0^{n-k}|-|I_0^{n-k+1}|}\\&>&
\frac{b}{(1-b)^{k-1}}\label{'cota'}
\end{eqnarray*}
In both cases we obtain
$$\frac{dist(0,x_j^n)}{|I_j^n|}> \frac{b}{2(1-b)}.$$
With this estimative we can define the diffeomorphisms $\phi_0^n,$
$q_j^n$ and $\phi_j^n,$ for $j=1,\cdots,q_n-1,$ in a common domain
in the complex plane. In fact, we know the principal branch of the
logarithm function $\log$ is holomorphic on the set
$\mathbb{C}\setminus \{z\in\mathbb{R}:z\leq 0\}.$ Let
$q_t^{+}:S_{\widetilde{\alpha}}^+\rightarrow \mathbb{C}$ and
$q_t^{-}:S_{\widetilde{\alpha}}^-\rightarrow \mathbb{C}$ be the
univalent maps where
$$q_t^{+}(z)= -2te^{\widetilde{\alpha} \log{z}}+2t-1$$ and
$$q_t^{-}(z)= -2te^{\widetilde{\alpha }\log{(-z)}}+2t-1.$$ We follow the proof
defining a common domain to the maps $q_j^n,$ for
$j=1,\cdots,q_n-1,$ taking in mind two different domains for the
critical exponent $\alpha>1.$ Firstly when $\alpha \in (1,2).$ For
$I_j^n\subset S_{\widetilde{\alpha}}^+ $ (or $I_j^n\subset
S_{\widetilde{\alpha}}^-$) applying the zoom operator $Z_{I_j^n}$
for the diffeomorphisms $q_t^+|_{I_j^n}$ (or $q_t^-|_{I_j^n}$ ). So
we can define $q_j^n=Z_{I_j^n}(q_t^+)$ (or $q_j^n=Z_{I_j^n}(q_t^-)$)
on the set $\epsilon_1$-stadium
$$D_{\epsilon_1}=\{z\in\mathbb{C}:dist(z,[-1,1])<\epsilon_1\},$$
where
$$\epsilon_1= \frac{b}{2(1-b)}.$$
This set contains the interval $[-1,1].$
\\Now for $\alpha\geq 2.$ We consider the distance $a_j$ of the
boundary point $x_j^n$ of the interval $I_j^n\subset
S_{\widetilde{\alpha}}^+$ (or $I_j^n\subset
S_{\widetilde{\alpha}}^-$) to the boundary of the sector
$S_{\widetilde{\alpha}}^+$ (or $S_{\widetilde{\alpha}}^-$). A
simples geometric calculus leads to the following relation
$$a_j=\sin(\frac{\pi}{\widetilde{\alpha}})\cdot dist(0,x_j^n)>
\sin (\frac{\pi}{\alpha-\delta})\cdot\frac{b}{2(1-b)}|I_j^n|,$$
doing a zoom of the diffeomorphisms $q_t^+|_{I_j^n}$ (or
$q_t^-|_{I_j^n}$) and taking
$$\epsilon_1=\sin(\frac{\pi}{\alpha-\delta})\cdot\frac{b}{2(1-b)},$$
so we can define $q_j^n=Z_{I_j^n}(q_t^+)$ (or
$q_j^n=Z_{I_j^n}(q_t^-)$)
on a set $\epsilon_1$-stadium $D_{\epsilon_1}.$ \\
Denote by $\epsilon_j^n$ the distance of the interval
$\phi^{-1}(I_{j+1}^n)$ to the boundary of $V.$ It is clare that
$$\epsilon_j^n\geq dist([-1,1],\partial V)$$ for all
$j=0,\cdots,q_n-1.$ There exists $N_0\geq N$ and $K>1000$ big enough
such that for all $N_0\leq n<M $ we have
$$dist([-1,1],\partial V)\geq K|\phi^{-1}(I_{j+1}^n)|,$$ where $j=0,\cdots,q_n-1.$
Then $\phi_j^n=Z_{\phi^{-1}(I_{j+1}^n)}(\phi),$ for all
$j=0,\cdots,q_n-1$ and for all $N_0\leq n<M,$ are defined on
$$D_{K}=\{z\in\mathbb{C}:dist(z,[-1,1])<K\}.$$
Choose $\varepsilon< \epsilon_1.$ We see that $\phi_0^n,$ and the
maps $q_j^n,$ $\phi_j^n,$ for all $j=1,\cdots,q_n-1,$ are defined on
the $\varepsilon$-stadium
$$D_{\varepsilon}=\{z\in\mathbb{C}:dist(z,[-1,1])<\varepsilon \},$$
for all $N_0\leq n<M.$
\end{proof}

\begin{proposition}\label{'rembound'}
Let $(\phi,t)\in \mathcal{U}$ be as in the statement of the
Theorem~\ref{'domaindef'}. There exists $N_0>0,$ $L>0,$ $H>0$ and
$b<1$ such that the maps $\phi_0^n=Z_{\phi^{-1}(I_{1}^n)}(\phi),$
$q_j^n=Z_{I_j^n}(q_t)$ and
$\phi_j^n=Z_{\phi^{-1}(I_{j+1}^n)}(\phi),$ where $j=1,\dots,q_n-1,$
with univalent extensions to maps defined on a domain
$D_{\varepsilon}$ containing the interval $[-1,1],$ satisfying
\begin{eqnarray}
\sum_{j=0}^{q_n-1}|\phi_j^{n}-\mathrm{id}|_{D_{\varepsilon}}\leq L
b^n,\label{'cal1'}
\end{eqnarray}
and
\begin{eqnarray}
\sum_{I^n_j \in L^n_{k}}|q_i^{n}-\mathrm{id}|_{D_{\varepsilon}}\leq
H(1-b)^{k-1},\label{'cal2'}
\end{eqnarray}
for all $N_0\leq n<M.$
\end{proposition}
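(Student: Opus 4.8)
The plan is to deduce both estimates from the real bounds (Lemma~\ref{'aprioribound'}) together with the domain control already established in Theorem~\ref{'domaindef'}. The underlying principle is that the zoom operator $Z_I$ applied to a map that is $C^3$-close to an affine map, or to the affine map itself, produces a diffeomorphism whose distance to the identity is controlled by the \emph{nonlinearity} of the original map restricted to $I$, which in turn decays with $|I|$. For the maps $\phi_j^n$, which are zooms of the fixed diffeomorphism $\phi$, I would first recall from Theorem~\ref{'domaindef'} that the relevant intervals $\phi^{-1}(I_{j+1}^n)$ have length comparable to a constant times $b^n$ (since at each renormalization step the interval lengths shrink geometrically by the real bounds, item (1)), and that the disjointness of the intervals $I_j^n$ in a cycle — property (4) in the definition of a cycle — forces $\sum_j |\phi^{-1}(I_{j+1}^n)| \le \text{const}$. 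Combining a bound of the form $|Z_I(\phi)-\mathrm{id}|_{D_\varepsilon} \le \text{const}\cdot |I|\cdot |D^2\phi|$ (valid because $\phi$ is a fixed $C^3$, hence bounded-distortion, diffeomorphism with univalent extension to a definite neighborhood $V$) with the length sum and the geometric decay gives Eq.~(\ref{'cal1'}).

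For Eq.~(\ref{'cal2'}) the key new point is the refinement by level sets $L^n_k$. The maps $q_i^n = Z_{I_i^n}(q_t)$ are zooms of the pure power map; the nonlinearity of $q_t$ on an interval $I_i^n$ is governed by how far $I_i^n$ is from the critical point $0$ relative to its own length. If $I_i^n \in L^n_k$, then $I_i^n$ sits $k$ levels ``below'' the last interval through the critical point, so the nested-interval argument from the proof of Theorem~\ref{'domaindef'} (Case~II, Eq.~(\ref{'eqbound'})) shows $|I_i^n| \le \text{const}\cdot(1-b)^{k}\cdot \text{dist}(0,I_i^n)$, whence $q_t$ restricted to $I_i^n$ has nonlinearity $O((1-b)^{k})$ after rescaling and $|q_i^n - \mathrm{id}|_{D_\varepsilon} \le \text{const}\cdot(1-b)^{k-1}$ for each individual interval. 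The sum over all $I^n_j \in L^n_k$ is then controlled because, for fixed $n$ and $k$, these intervals are pairwise disjoint and each has length bounded below in terms of its distance to $0$; more efficiently, one counts that the total measure occupied at level $k$ is bounded, and uses the uniform lower bound $b$ on ratios to conclude the number of such intervals times $(1-b)^{k-1}$ is still $O((1-b)^{k-1})$ — or one simply absorbs the (bounded, since combinatorics are bounded by $B$) number of intervals per level into the constant $H$.

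Concretely, I would organize the proof as follows. First, fix the constant $b<1$ and the level $N_0 \ge N$ furnished by Lemma~\ref{'aprioribound'} and Theorem~\ref{'domaindef'}, so that all the maps in question have univalent extensions to a common $D_\varepsilon$. Second, prove the elementary zoom estimate: if $g:[-1,1]\to[-1,1]$ extends univalently to a neighborhood of $\overline{D_\varepsilon}$ with bounded distortion, and $I \subset [-1,1]$, then $|Z_I(g) - \mathrm{id}|_{D_\varepsilon}$ is bounded by a constant (depending on $\varepsilon$ and the distortion) times the oscillation of $\log|Dg|$ on a definite neighborhood of $I$, hence by $\text{const}\cdot|I|$ when $g$ is $C^2$ with uniformly bounded second derivative on that neighborhood — this is where I would invoke the Koebe-type distortion control implicit in the hypotheses. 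Third, apply this with $g = \phi$ and sum over $j$ using the geometric decay of $|\phi^{-1}(I^n_{j+1})|$ and the disjointness-forced bound $\sum_j |I^n_{j+1}| \le 2$; this yields (\ref{'cal1'}) with $L$ depending on $C$, $\varepsilon$, $B$. Fourth, apply it with $g = q_t$ on the sector-truncated domains used in Theorem~\ref{'domaindef'}, exploiting that the distortion of $q_t$ on $I^n_i \in L^n_k$ is $O((1-b)^{k})$ by the nested-interval estimate of Case~II; summing over $I^n_j \in L^n_k$ (a bounded number of intervals, by bounded combinatorics) yields (\ref{'cal2'}) with $H$ depending on $C$, $\varepsilon$, $B$, $\alpha_0$.

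The main obstacle I anticipate is making the zoom estimate for $q_t$ genuinely uniform in the critical order $\alpha$ near $\alpha_0$: the constant relating nonlinearity to $|I|/\text{dist}(0,I)$ blows up as the sector angle $\pi/\alpha$ shrinks, i.e.\ as $\alpha \to \infty$, so one must keep $\alpha$ in a bounded range and track the $\sin(\pi/(\alpha-\delta))$ factor from Theorem~\ref{'domaindef'} through the estimate, exactly as that proof does for $\epsilon_1$. A secondary technical point is that for intervals $I^n_i$ very close to $0$ (the ones in $L^n_k$ with small $k$), the map $q_t$ has large distortion in absolute terms, but the geometric factor $(1-b)^{k-1}$ is then close to $1$, so the stated bound is still the correct (weak) one for those levels; the real content of (\ref{'cal2'}) is the decay for large $k$, where the intervals are far from $0$ and $q_t$ looks increasingly affine there. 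Once the uniform zoom estimate is in hand, both sums are routine.
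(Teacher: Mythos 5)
Your treatment of Eq.~(\ref{'cal1'}) is sound and matches the paper in spirit: a Koebe-type per-zoom estimate of the form $|Z_I(\phi)-\mathrm{id}|_{D_\varepsilon}=O(|\phi^{-1}(I)|)$ (the paper invokes Theorem~\ref{'aproxide'} with $K\sim R/|\phi^{-1}(I_{j+1}^n)|$), followed by summation using the geometric decay $\sum_j|\phi^{-1}(I_j^n)|\lesssim b^n$ that comes from the real bounds.

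For Eq.~(\ref{'cal2'}), however, there is a genuine gap in your summation argument. You establish the correct \emph{uniform} per-interval bound $|q_j^n-\mathrm{id}|_{D_\varepsilon}\lesssim(1-b)^{k-1}$ for $I_j^n\in L^n_k$, and then propose to sum by controlling the number of intervals, claiming that ``the number of intervals per level'' is bounded because the combinatorics are bounded by $B$. That count is \emph{not} bounded: $L^n_k$ consists of the $k$-th generation descendants of the non-central children of $I_0^{n-k}$, and each generation multiplies the count by up to the branching factor $m$ (which is bounded by roughly $1/b$ but is certainly $\ge 2$ in general), so $\#L^n_k$ can grow like $m^{k-1}$. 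Multiplying a uniform bound of size $(1-b)^{k-1}$ by exponentially many terms gives $(m(1-b))^{k-1}$, which need not decay at all. The alternate phrasing you offer — ``total measure at level $k$ is bounded'' combined with ``lower bound $b$ on ratios'' — does not repair this, since the real bounds give a lower bound on $|I_j^n|$ only relative to the parent, not absolutely, so the measure bound does not cap the count.

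The fix, which is what the paper actually does, is to use a per-interval estimate that is proportional to the interval length rather than uniform: by Theorem~\ref{'aproxide'} with $K_1\sim|I_0^{n-k+1}|/|I_j^n|$ (times the $\sin(\pi/(\alpha-\delta))$ factor when $\alpha\ge 2$), one gets
$$|q_j^n-\mathrm{id}|_{D_\varepsilon}=O\!\left(\frac{|I_j^n|}{|I_0^{n-k+1}|}\right),$$
and then one sums lengths, not counts. The decisive input is that the \emph{total} length of all $n$-th-cycle intervals inside a parent is at most a factor $(1-b)$ of the parent (because each gap has relative size $>b$), so iterating $k-1$ generations gives $\sum_{I_j^n\in L^n_k}|I_j^n|\le(1-b)^{k-1}\bigl(|I_0^{n-k}|-|I_0^{n-k+1}|\bigr)\lesssim\frac{(1-b)^{k-1}}{b}\,|I_0^{n-k+1}|$. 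Plugging this into the variable per-interval estimate yields $\sum_{I_j^n\in L^n_k}|q_j^n-\mathrm{id}|_{D_\varepsilon}\lesssim(1-b)^{k-1}/b$, which is Eq.~(\ref{'cal2'}). Your uniform per-interval bound is the worst case and over-counts; replacing it by the $|I_j^n|$-proportional bound and summing lengths is essential. (The separate treatment of $k=1$ by a normal-family/Montel argument, as the paper does, is still needed because for those intervals the ratio $|I_j^n|/\mathrm{dist}(0,I_j^n)$ is merely bounded and the $K_1$ above is only $O(1)$.)
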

\begin{proof}
Let $R=dist([-1,1],\partial V).$ For the first estimative
define
$$K=K(j,n)=\frac{R}{|\phi^{-1}(I_{j+1}^{n})|}.$$ There is
$N_0>1$ such that for all $N_0\leq n<M$ we have
$0<1+\varepsilon<K/4$ and
$$\phi_j^n=Z_{\phi^{-1}(I_{j+1}^n)}(\phi),$$ where
$j=0,\cdots,q_n-1,$ are defined on a ball $B(0,K/2).$ From the
Theorem~\ref{'aproxide'}
\begin{eqnarray}
|\phi_j^n-\mathrm{id}|_{D_{\varepsilon}}\leq
O(\frac{1+\varepsilon}{K}).\label{'aprox1'}
\end{eqnarray}
On the other hand by the real bounds there exists constants
$L_1>0$ and $b<1$ such that
$$\sum_{j=1}^{q_n-1}|I^{n}_{j}|\leq L_1b^n,$$
for all $N_0\leq n<M.$ As the diffeomorphism
$\phi$ has bounded derivative those constants can be adjusted such
that
$$\sum_{j=1}^{q_n-1}|\phi^{-1}(I^{n}_{j})|\leq L_1b^n,$$
for all $N_0\leq n<M.$ So we have
\begin{eqnarray*}
\sum_{j=0}^{q_n-1}\frac{1+\varepsilon}{K}&=&
(1+\varepsilon)\sum_{j=0}^{q_n-1}\frac{|\phi^{-1}(I^{n}_{j+1})|}{R}\\
&\leq&\frac{(1+\varepsilon)}{R}Lb^n,
\end{eqnarray*}
this implies the Eq.~(\ref{'cal1'}).\\
To obtain Eq.~(\ref{'cal2'}), we need analyze two cases: \\
{\bf {\it  Case I}}. For $k>1$ in the Case II of the
Theorem~\ref{'domaindef'} we can define for $I_j^n\subset
I_0^{n-k}\setminus I_0^{n-k+1}$
$$
\begin{array}{cc}
    K_1=K_1(j,n)= & \left\{
                     \begin{array}{ll}
                          \displaystyle\frac{|I_0^{n-k+1}|}{2|I_j^n|} &
                          \hbox{se}\quad
                          1<\alpha<2\\ \noalign{\medskip}
                          \sin(\displaystyle\frac{\pi}{\alpha-\delta}).
                          \displaystyle\frac{|I_0^{n-k+1}|}{2|I_j^n|} & \hbox{if}\quad\alpha\geq2
                      \end{array}
                    \right.
\end{array}$$
Observe that the univalent maps
$$q_j^n=Z_{I_j^n}(q_t),$$ where $j=1,\cdots,q_n-1,$ are defined
in the ball $B(0,K_1/2).$ We have $1+\varepsilon<K_1/4$ then by the
Theorem~\ref{'aproxide'}
\begin{eqnarray}
|q_j^n-\mathrm{id}|_{D_{\varepsilon}}\leq
O(\frac{1+\varepsilon}{K_1}).\label{'aprox2'}
\end{eqnarray}
On the other hand we obtain
\begin{eqnarray*}
\sum_{I_j^n\subset I_0^{n-k}\setminus
I_0^{n-k+1}}\frac{1+\varepsilon}{K_1}&\leq &
(1+\varepsilon)\sum_{I_j^n\subset I_0^{n-k}\setminus
I_0^{n-k+1}}\frac{2|I_j^n|}{|I_0^{n-k+1}|}\\ \noalign{\medskip}
&\leq &\frac{2(1+\varepsilon)}{b}.(1-b)^{k-1}.
\end{eqnarray*}
{\bf {\it  Case II}}. For $k=1.$ From Theorem~\ref{'domaindef'}, for
the interval $I_j^n \subset I_0^{n-1},$ where $N\leq n-k<n<M,$ the
univalent maps $q_j^n=Z_{I_j^n}(q_t)$ are defined in the
$\epsilon_1$-stadium $D_{\epsilon_1},$ where
$\epsilon_1>\varepsilon.$ Remember that $q_j^n(-1)=-1$ and
$q_j^n(1)=1.$ So considering the univalent maps $q_j^n$ defined on
$D_{\epsilon_1}\setminus \{-1,1\},$ by the Montel's
Theorem~\cite{J.C1}, form a normal family. Then there exists $C_1>0$
such that for all interval $I_j^n \subset I_0^{n-1},$ where $N\leq
n-k<n<M,$ we have
$$|q_j^n-\mathrm{id}|_{D_{\varepsilon}}< C_1.$$ So from these two cases we
conclude that there exists $H>0$ such that
\begin{eqnarray}
\sum_{I_j^n\in L^n_{k}}|q_i^{n}-\mathrm{id}|_{D_{\varepsilon}}\leq
H(1-b)^{k-1}.
\end{eqnarray}
\end{proof}

Now we are ready to give the proof of the Complex bounds's Theorem.

\begin{bew}{of the Theorem~\ref{'restate1'}} From Theorem~\ref{'domaindef'}
the maps $\phi_0^n,$ $q_i^n$ e $\phi_i^n,$ for all
$i=1,\cdots,q_n-1,$ have univalent extension on a set
$\varepsilon$-stadium $D_{\varepsilon}$ containing the interval
$[-1,1].$ Let $D_{\varepsilon}\supset
D_{\varepsilon/2}\supset[-1,1]$ be subsets strictly nested. Define
for all $j=1,\cdots,q_n-1$ the diffeomorphisms
$$\Phi_j^n=\phi_{q_n-1}^n\circ q_{q_n-1}^n\circ \cdots\circ\phi_{q_n-j}^n $$
and
$$\Psi_j^n=\phi_{q_n-1}^n\circ q_{q_n-1}^n\circ \cdots\circ\phi_{q_n-j}^n\circ q_{q_n-j}.$$
Define $\rho(\phi_{q_n-1}^n):=\varepsilon/3.$ We are going to construct
by induction the domains to the maps $\Phi_j^n$ and $\Psi_j^n,$ for
$j=1,\cdots,q_n-1.$ First let $j=1.$ By Lemma~\ref{'univmap'} there
exists a constant $K>0$ such that:

A. For $q_{q_n-j}^n:D_{\varepsilon}\rightarrow \mathbb{C}$ and
$\Phi_{j}^n:D_{\rho(\phi_{q_n-j}^n)}\rightarrow \mathbb{C}$ there is
a $\rho(q_{q_n-j}^n)$-stadium $D_{\rho(q_{q_n-j}^n)}\subset
D_{\rho_(\Phi_{j}^n)}$ such that
$$q_{q_n-j}^{n}(D_{\rho(q_{q_n-j}^n)})\subset D_{\rho(\Phi_{j}^n)}.$$
Moreover $$\rho(q_{q_n-j}^n)\geq e^{-k|q_{q_n-j}^{n}-\mathrm{id}|_{D_{\varepsilon}}}\rho(\Phi_{j}^n),$$
 where $\rho(q_{q_n-j}^n)$ and $\rho(\Phi_{j}^n)$ are the
 distances between the boundary of $D_{\rho(q_{q_n-j}^n)}$ and
 $D_{\rho(\Phi_{j}^n)}=D_{\rho(\phi_{q_n-j}^n)}$ respectively to the interval
 $[-1,1].$

Notice that $D_{\rho(q_{q_n-j}^n)}$ is the domain of definition of
the map $\Psi_j^n=\Phi_{j}^n\circ q_{q_n-j}^n.$ So again by the
Lemma~\ref{'univmap'} there exists a constant $K>0$ such that:

B. For $\phi_{q_n-j-1}^n:D_{\varepsilon}\rightarrow \mathbb{C}$ and
$\Psi_j^n:D_{\rho(q_{q_n-j}^n)}\rightarrow \mathbb{C}$ there is a
$\rho(\phi_{q_n-j-1}^n)$-stadium $D_{\rho(\phi_{q_n-j-1}^n)}\subset
D_{\rho(\Psi_j^n)}$ such that
$$\phi_{q_n-j-1}(D_{\rho(\phi_{q_n-j-1}^n)})\subset D_{\rho(\Psi_j^n)}.$$
Moreover $\rho(\phi_{q_n-j-1}^n)\geq e^{-k|\phi_{q_n-j-1}^{n}-\mathrm{id}|_{D_{\varepsilon}}}\rho(\Psi_j^n),$ where $\rho(\phi_{q_n-j-1}^n)$ e $\rho(\Psi_j^n)$ are the distances between the boundary of $D_{\rho(\phi_{q_n-j-1}^n)}$
  and $D_{\rho(\Psi_j^n)}=D_{\rho(q_{q_n-j}^n)}$ respectively to the interval $[-1,1].$
Here $D_{\rho(\phi_{q_n-j-1}^n)}$ is the domain of definition of the
map $\Phi_{j+1}^n=\Psi_j^n\circ \phi_{q_n-j-1}^n.$\\Then for each
$j=2,\cdots,q_n-1$ we apply the Lemma~\ref{'univmap'} for a pair of
maps in $A$ and $B.$ Finally we obtain a $\rho(\phi_0^n)$-stadium
$D_{\rho(\phi_0^n)}$ that is the domain of definition of the map
$$\Psi_{q_n}^n\circ \phi_0^n=\phi_{q_n-1}^n\circ q_{q_n-1}^n\circ
\cdots\circ\phi_{1}^n\circ q_{1}^n\circ\phi_0^n$$ where
\begin{eqnarray}
 \rho(\phi_0^n) & \geq & \prod_{i=1}^{q_n-1}e^{-K|q_i^{n}-\mathrm{id}|_{D_{\varepsilon}}}.
 \prod_{i=1}^{q_n-2}e^{-K|\phi_{i}^{n}-\mathrm{id}|_{D_{\varepsilon}}}.\rho(\phi_{q_n-1}^n)\label{'domain'}
\end{eqnarray}
By Proposition~\ref{'rembound'} there exists $L>0,$ $H>0$ and $b<1$
such that
$$\sum_{I_j^n\in
\mathcal{I}^n_{k}}|q_i^{n}-\mathrm{id}|_{D_{\varepsilon}}<H(1-b)^{k-1}$$
and $$\sum_{i}|\phi_i^{n}-\mathrm{id}|_{D_{\varepsilon}}<Lb^n,$$ for
all $N_0<n<M$ and $1\leq k\leq n.$ Follow from~(\ref{'domain'}) that
\begin{eqnarray*}
 \rho(\phi_0^n)
 & \geq& \frac{\varepsilon}{3}\cdot e^{-K\sum_{k=1}^n \sum_{I^n_j \in
L^n_{k}}|q_i^{n}-\mathrm{id}|_{D_{\varepsilon}}}.
 e^{-K\sum_{i}|\phi_i^{n}-\mathrm{id}|_{D_{\varepsilon}}}\\&=&\frac{\varepsilon}{3}\cdot
 e^{-KH\frac{1}{b}-KL},
\end{eqnarray*}
for all $N_0<n<M.$ Taking $\delta_0=\frac{\varepsilon}{3}\cdot
e^{-KH\frac{1}{b}-KL}$ we have that the first component of the
family $\mathcal{\widetilde{R}}^n (\phi,t)$ has a univalent
extension on the domain $D_{\delta_0}$ that not depend of $n.$
\end{bew}

\begin{corollary}\label{'corRcomplex'}
For all $B>0$ there exists $\delta_0(B)>0,$ and
$N=N(B,C,\delta_0)\geq1$ such that if $(\phi,t)\in
\mathcal{H}_{\alpha}(C,\delta_0/2,M)$ Then if $M> n\geq N$ we have
that the first component of $\mathcal{\widetilde{R}}^n (\phi,t),$ is
defined and univalent in a complex domain $D_{2\delta_0}$.
\end{corollary}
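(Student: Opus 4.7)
The plan is to obtain this corollary as a direct relabeling of Theorem~\ref{'restate1'}; no new analysis is required, only a careful matching of the constants so that the concluding domain is described as $D_{2\delta_0}$ rather than $D_{\delta_0}$.

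First I would introduce temporary notation: let $\widetilde{\delta}_0=\widetilde{\delta}_0(B)$ and $\widetilde{N}_0(C,\eta)$ be the specific constants produced by Theorem~\ref{'restate1'} applied with the ambient $\alpha_0$ and the fixed bound $B$ on the combinatorics. The strategy is then to declare the $\delta_0$ of the corollary to be $\delta_0:=\widetilde{\delta}_0/2$, so that the conclusion of the theorem, which gives univalence on $D_{\widetilde{\delta}_0}$, automatically reads as univalence on $D_{2\delta_0}$ in the notation of the corollary.

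With this choice fixed, I would set $N:=\widetilde{N}_0(C,\delta_0/2)$. Any pair $(\phi,t)\in \mathcal{H}_\alpha(C,\delta_0/2,M)$ then satisfies the hypotheses of Theorem~\ref{'restate1'} with $\eta=\delta_0/2$, namely $\phi\in\mathcal{A}_{D_{\delta_0/2}}$ univalent there, real on the real line, $|\phi|_{C^3([-1,1])}\leq C$, and $(\phi,t)$ being $M$-times renormalizable with combinatorics $\sigma$ and $M>N$. Applying the theorem directly yields
\[
\widetilde{\mathcal{R}}^n_\alpha(\phi,t)\in \mathcal{H}_\alpha(C_0,\widetilde{\delta}_0,M-n)=\mathcal{H}_\alpha(C_0,2\delta_0,M-n)
\]
for every $M>n\geq N$, which by the definition of the class $\mathcal{H}_\alpha(\cdot,\cdot,\cdot)$ says precisely that the first component of $\widetilde{\mathcal{R}}^n_\alpha(\phi,t)$ admits a univalent extension to $D_{2\delta_0}$, as desired.

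There is no real obstacle here: the proof is just a rescaling of the complex bounds, and the only point to verify is that the constants $\widetilde{\delta}_0,C_0,\widetilde{N}_0$ produced by Theorem~\ref{'restate1'} depend only on the quantities listed in the corollary (namely $B$ and, through $\widetilde{N}_0$, also on $C$ and $\delta_0$), which is exactly what that theorem asserts.
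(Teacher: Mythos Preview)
Your proposal is correct and matches the paper's approach: the corollary is stated without proof in the paper, being an immediate consequence of Theorem~\ref{'restate1'} obtained precisely by the relabeling $\delta_0:=\widetilde{\delta}_0/2$ that you describe.
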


\begin{corollary}\label{'Rcomplexconseq'}
For each unimodal pair $(\phi,t)$, infinitely renormalizable with
bounded combinatorics by $B>0,$ there exists $N$ such that the
sequence consisting of the first component of the pairs
$\mathcal{\widetilde{R}}^n (\phi,t)$, with $n \geq N$, is a
pre-compact family in $D_{2\delta_0}.$
\end{corollary}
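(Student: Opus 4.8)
The plan is to derive this directly from the Complex Bounds Theorem (Theorem~\ref{'restate1'}) together with Corollary~\ref{'corRcomplex'}, using a normal-families argument. First I would observe that an infinitely renormalizable unimodal pair with bounded combinatorics by $B$ is in particular $M$-times renormalizable for every $M$, so by the real bounds (Lemma~\ref{'aprioribound'}) its diffeomorphic part has controlled $C^3$-geometry; hence for a suitable $C>0$ and $\eta>0$ we have $(\phi,t)\in \mathcal{H}_\alpha(C,\eta,M)$ for all $M$. Then Theorem~\ref{'restate1'} (or equivalently Corollary~\ref{'corRcomplex'}) gives an $N=N(C,\eta)$ and a fixed stadium $D_{2\delta_0}$ such that for every $n\geq N$ the first component $\phi^{(n)}$ of $\mathcal{\widetilde{R}}^n(\phi,t)$ extends univalently to $D_{2\delta_0}$.

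The second step is to upgrade "univalent on $D_{2\delta_0}$ for each $n$" to precompactness of the family $\{\phi^{(n)}\}_{n\geq N}$ in $D_{2\delta_0}$ (more precisely, in the Banach space $\mathcal{B}_{D_{2\delta_0}}$, or restricted to a slightly smaller stadium $D_{2\delta_0}\Subset D_{2\delta_0'}$ so that we have a genuine relatively compact inclusion). Here I would invoke Montel's theorem: each $\phi^{(n)}$ is univalent on $D_{2\delta_0}$ with $\phi^{(n)}(-1)=-1$ and $\phi^{(n)}(1)=1$, so the family omits the value obtained by reflecting, or more simply is uniformly bounded on $D_{2\delta_0}$ because a univalent map of a fixed domain fixing two points of $[-1,1]$ cannot have large image (one can bound $|\phi^{(n)}|$ on $D_{2\delta_0}$ in terms of the geometry of the slightly larger domain $D_{2\delta_0'}$ on which univalence also holds, using the Koebe distortion theorem). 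A uniformly bounded family of holomorphic functions on $D_{2\delta_0'}$ is normal, hence precompact in the topology of uniform convergence on the compact subset $\overline{D_{2\delta_0}}$; this is exactly the asserted precompactness.

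I would phrase the conclusion so as to match the statement: there exists $N$ such that $\{\phi^{(n)}\}_{n\geq N}$ is a normal (pre-compact) family on $D_{2\delta_0}$. The main obstacle — really the only nontrivial point — is making the uniform bound on the $\phi^{(n)}$ explicit: Theorem~\ref{'restate1'} as stated gives univalence and the $C^3$-bound $|\phi^{(n)}|_{C^3([-1,1])}\leq C_0$ on the real trace, and one must propagate this to a sup-norm bound on the full complex stadium. This is handled by the standard trick of running the complex-bounds construction on a stadium $D_{2\delta_0'}$ strictly larger than $D_{2\delta_0}$ (shrinking $\delta_0$ if necessary so that $2\delta_0 < 2\delta_0'$ both satisfy the theorem), so that each $\phi^{(n)}$ is univalent on $D_{2\delta_0'}$; the Koebe distortion theorem then bounds $|\phi^{(n)}|$ on the compactly contained $\overline{D_{2\delta_0}}$ in terms of $|(\phi^{(n)})'|$ at a real point, which in turn is controlled by $C_0$. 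Everything else is the direct application of Montel's theorem and requires no computation.
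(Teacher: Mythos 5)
Your argument is essentially the paper's: apply the complex bounds (Theorem~\ref{'restate1'} / Corollary~\ref{'corRcomplex'}) to get univalent extensions on a fixed stadium, note the normalization $\phi^{(n)}(\pm 1)=\pm 1$, and invoke Montel's theorem. The only difference is that you spell out the uniform-boundedness step (via Koebe distortion on a slightly larger stadium) that the paper leaves implicit when it passes from "univalent and fixing $\pm 1$" to "normal family"; that is a correct and useful clarification, not a change of approach.
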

\begin{proof} The diffeomorphic part of each $\mathcal{\widetilde{R}}^n
(\phi,t)$ is a diffeomorphisms that preserve the interval $[-1,1].$
Actually this analytic diffeomorphism is a decomposition of
diffeomorphism. By Theorem~\ref{'restate1'} the diffeomorphism part
of each $\mathcal{\widetilde{R}}^n (\phi,t),$ where $n\in
\mathbb{N},$ has univalent extension on a fix $\delta_0$-stadium
$D_{\delta_0}$ containing the interval $[-1,1]$ that no depend of
$n.$ Since each of those transformations fix $-1$ and $1,$ follow of
the Montel's Theorem that with the sup norm on the all holomorphic
functions we have that the first component of the pairs
$\mathcal{\widetilde{R}}_n (\phi,t)$ form a pre-compact family in
$D_{2\delta_0}.$
\end{proof}

\begin{corollary}\label{'restatecomplex'}
There exists $\delta_0,$ $C_0$ and $N_0=N_0(C_0,\delta_0)$ such that
\begin{itemize}
  \item We have $\widetilde{\mathcal{R}}^{N_0}_{\alpha}(\mathcal{H}_{\alpha}(C_0,\delta_0,\infty))
  \subset \mathcal{H}_{\alpha}(C_0,\delta_0,\infty).$
  \item For all $C,$ $\eta$ exists $N=N(C,\eta)$ such that for all
  $k\geq 0$ we have $$\widetilde{\mathcal{R}}^{N+kN_0}_{\alpha}
  (\mathcal{H}_{\alpha}(C,\eta,\infty))\subset \mathcal{H}_{\alpha}(C_0,\delta_0,\infty).$$
\end{itemize}
\end{corollary}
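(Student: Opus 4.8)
The plan is to derive both assertions as essentially a bookkeeping consequence of the Complex Bounds theorem (Theorem~\ref{'restate1'}): no new analytic estimate is needed, only a careful tracking of the quantifiers and the order in which the constants are produced.

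Fix $\alpha_0>1$ and apply Theorem~\ref{'restate1'} to obtain $\varepsilon=\varepsilon(B)>0$, $\delta_0=\delta_0(B)>0$ and $C_0=C_0(B)>0$, valid uniformly for every $\alpha\in(\alpha_0-\varepsilon,\alpha_0+\varepsilon)$; these are the constants $\delta_0,C_0$ of the corollary, and all statements below are for $\alpha$ in this range. The point to exploit is that $\delta_0$ and $C_0$ are produced \emph{before} any renormalization threshold, so I may legitimately feed them back into Theorem~\ref{'restate1'} as the input data $C=C_0$, $\eta=\delta_0$; this produces a threshold $N_0:=N_0(C_0,\delta_0)\ge 1$, which will be the $N_0$ of the corollary. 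For the first bullet, take any $(\phi,t)\in\mathcal{H}_{\alpha}(C_0,\delta_0,\infty)$: it is $M$-times renormalizable with combinatorics $\sigma$ for $M=\infty>N_0$, its diffeomorphic part is univalent on $D_{\delta_0}$, real on $[-1,1]$, with $|\phi|_{C^3([-1,1])}\le C_0$, so Theorem~\ref{'restate1'} applies with $n=N_0$ and yields $\widetilde{\mathcal{R}}^{N_0}_{\alpha}(\phi,t)\in\mathcal{H}_{\alpha}(C_0,\delta_0,\infty)$. Hence $\widetilde{\mathcal{R}}^{N_0}_{\alpha}$ maps $\mathcal{H}_{\alpha}(C_0,\delta_0,\infty)$ into itself.

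For the second bullet, given arbitrary $C,\eta>0$, apply Theorem~\ref{'restate1'} once more with these $C,\eta$ to get $N:=N_0(C,\eta)$; then $\widetilde{\mathcal{R}}^{N}_{\alpha}$ sends $\mathcal{H}_{\alpha}(C,\eta,\infty)$ into $\mathcal{H}_{\alpha}(C_0,\delta_0,\infty)$. Writing $\widetilde{\mathcal{R}}^{N+kN_0}_{\alpha}$ as $\widetilde{\mathcal{R}}^{N_0}_{\alpha}$ applied $k$ successive times after $\widetilde{\mathcal{R}}^{N}_{\alpha}$, an induction on $k$ that uses at each step the self-map property established in the first bullet gives $\widetilde{\mathcal{R}}^{N+kN_0}_{\alpha}(\mathcal{H}_{\alpha}(C,\eta,\infty))\subset\mathcal{H}_{\alpha}(C_0,\delta_0,\infty)$ for all $k\ge 0$.

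The only delicate point, and the one I would check with the most care, is the seemingly circular use of $(C_0,\delta_0)$ both as the \emph{output} constants of Theorem~\ref{'restate1'} and, a moment later, as its \emph{input} pair $(C,\eta)$: this is harmless precisely because in that theorem $\delta_0$ and $C_0$ depend only on $B$ and are fixed before the depth $N_0$, so the second application neither alters them nor creates a loop. One should also verify that Theorem~\ref{'restate1'} permits the endpoint $n=N_0$ — it does, since its conclusion is asserted for $M>n\ge N_0$ and here $M=\infty$ — and, to keep the statement non-vacuous, recall from Remark~\ref{remext} that $\mathcal{H}_{\alpha}(C_0,\delta_0,\infty)$ contains the renormalization fixed point and hence is nonempty.
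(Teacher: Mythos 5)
Your proof is correct and is exactly the bookkeeping deduction the paper leaves implicit (the corollary is stated without proof after Theorem~\ref{'restate1'}): feed the output constants $(C_0,\delta_0)$ of the Complex Bounds theorem back in as input to obtain $N_0$, use $M=\infty$ so that $M-n=\infty$, and iterate the resulting self-map property for the second bullet. Your flagged concern about the apparent circularity is resolved precisely as you say, since $\delta_0,C_0$ in Theorem~\ref{'restate1'} are fixed (as functions of $B$ alone) before the quantifier over $(C,\eta)$.
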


\begin{corollary}
A unimodal pair $(\phi_{\alpha}^*,t_{\alpha}^*)$ such that $\widetilde{\mathcal{R}}_{\alpha}(\phi_{\alpha}^*,t_{\alpha}^*)=(\phi_{\alpha}^*,t_{\alpha}^*)$ belongs to
$\mathcal{H}_{\alpha}(C_0,\delta_0,\infty).$
\end{corollary}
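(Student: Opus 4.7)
The plan is to combine Remark~\ref{remext} with the second part of Corollary~\ref{'restatecomplex'}. By hypothesis, $(\phi_\alpha^*,t_\alpha^*)$ is a unimodal pair, so it automatically satisfies properties (1)--(3) of the definition of $\mathcal{H}_\alpha$ for some constants $C,\eta>0$. Remark~\ref{remext} strengthens this: although the fixed point is a priori only known to be $C^3$, its first component admits a holomorphic univalent extension to some neighborhood of $[-1,1]$, so the data $(C,\eta)$ indeed make sense.

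Next I would observe that being a fixed point of $\widetilde{\mathcal{R}}_\alpha$ makes $(\phi_\alpha^*,t_\alpha^*)$ infinitely renormalizable by definition, because each renormalization is defined and coincides with $(\phi_\alpha^*,t_\alpha^*)$ itself. Combining this with the previous paragraph, there exist constants $C,\eta>0$ such that
\[
(\phi_\alpha^*,t_\alpha^*)\in \mathcal{H}_\alpha(C,\eta,\infty).
\]

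Now I would apply Corollary~\ref{'restatecomplex'}: for these particular $C,\eta$ there is $N=N(C,\eta)$ such that for every $k\geq 0$,
\[
\widetilde{\mathcal{R}}_\alpha^{N+kN_0}(\phi_\alpha^*,t_\alpha^*)\in \mathcal{H}_\alpha(C_0,\delta_0,\infty).
\]
But the left-hand side equals $(\phi_\alpha^*,t_\alpha^*)$ because $(\phi_\alpha^*,t_\alpha^*)$ is fixed by $\widetilde{\mathcal{R}}_\alpha$ and hence by all its iterates. Therefore $(\phi_\alpha^*,t_\alpha^*)\in \mathcal{H}_\alpha(C_0,\delta_0,\infty)$, as desired.

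No serious obstacle is expected: the argument is a one-line consequence of Corollary~\ref{'restatecomplex'} once Remark~\ref{remext} is invoked to ensure that the fixed point has an honest analytic diffeomorphic part (so that $(C,\eta)$ exist at all). The only mild subtlety worth flagging in the write-up is the tacit use of the fact that ``fixed point'' implies ``infinitely renormalizable'', which is immediate from the definition of $\mathcal{H}_\alpha(C,\eta,M)$ with $M=\infty$.
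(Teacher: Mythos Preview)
Your argument is correct and matches the paper's intended approach: the corollary is stated without proof precisely because it is an immediate consequence of Corollary~\ref{'restatecomplex'} applied to a fixed point. One small redundancy: you do not need to invoke Remark~\ref{remext} at all, since the hypothesis already says $(\phi_\alpha^*,t_\alpha^*)$ is a \emph{unimodal pair}, and by the paper's definition this means properties (1)--(3) of $\mathcal{H}_\alpha$ hold for some $C,\eta>0$; so the existence of $(C,\eta)$ is built into the statement, and your second paragraph onward is the complete proof.
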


\section{Composition transformation}

When the critical order is an even natural number $\alpha=2r$, $r \in \mathbb{N}$, the  relation between the new renormalization operator
$\widetilde{\mathcal{R}}$ and the usual one $\mathcal{R}$  is given by the
\emph{composition transformation}, denoted by $L ,$ that we will define
here. This allow us to transfer some results of the renormalization
$\mathcal{R}$ to the new operator when the critical exponent is an
even number.

Take $\epsilon>0$ small enough and consider the set
$$D_{\epsilon}([0,1])=\{z\in\mathbb{C}: dist(I,z)<\epsilon\}.$$
Now fix the critical exponent $\alpha=2r,$ with $r\in \mathbb{N}.$
Consider a unimodal map $f=\phi\circ q_t:[-1,1]\rightarrow [-1,1],$
with $t\in [0,1],$ where $ q_t: [-1,1]\rightarrow [-1,1],$ is
$$q_t(x)=- 2tx^{\alpha}+2t-1,$$ with critical exponent $\alpha,$ and
$\phi$ is a diffeomorphism $C^{1},$ that preserve the orientation,
of the interval $[-1,1].$ Denote $A_t(x)=- 2tx+2t-1.$ Then we can
write $f= (\phi\circ A_t)(x^{\alpha}).$ We know that a map
$\tau:z\mapsto z^{\alpha}$ is holomorphic in the complex plane
without zero, and a map
$$A_t:z\mapsto - 2tz+2t-1 ,$$ where $t\in D_{\epsilon}([0,1]),$ is
holomorphic in the complex plane. Notice that $q_t=A_t\circ \tau.$

Let $V\subset \mathbb{C}$ be an open connected  set containing the interval
$[-1,1].$ Let  $\widetilde{V}$ be any open connected set containing the interval
$[-1,1]$ and compactly contained in $V$ such that for every $t\in D_{\epsilon}([0,1])$, with small $\epsilon$, we have that $q_t(\widetilde{V})$ is compactly contained in $V$.

\begin{definition} For $\epsilon$ small,
define the complex analytic composition transformation
\begin{eqnarray*}
L:\mathcal{A}_V\times D_{\epsilon}([0,1])&\rightarrow&
\mathcal{U}_{\widetilde{V}}
\end{eqnarray*}
as $L(\phi,t)(z)=\phi\circ q_t(z),$ for $ z$ in
$\overline{\widetilde{V}} .$
\end{definition}

\begin{remark}\label{'relacL'}
It is easy to see that if $(\phi,t)\in \mathcal{U}$ then
$$L\circ \mathcal{\widetilde{R}}(\phi,t)=\mathcal{R}\circ L(\phi,t).$$
\end{remark}

\begin{proposition}[Injectivity of L]\label{'unipointren'}
Let $\alpha > 1$. If $\phi,\widetilde{\phi} \in
\mathcal{A}_V$ and $t,\widetilde{t} \in \mathbb{C}\setminus \{0\} $
are such that
$$\phi\circ q_t=\widetilde{\phi}\circ q_{\widetilde{t}}$$ on $[-1,1],$
then $\phi=\widetilde{\phi}$ and $t=\widetilde{t}.$
\end{proposition}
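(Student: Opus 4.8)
The statement asserts that the map $(\phi,t)\mapsto \phi\circ q_t$ on $[-1,1]$ is injective (for $t\neq 0$), where $q_t(x)=-2t|x|^\alpha+2t-1$. First I would observe that the image $g=\phi\circ q_t$ determines its own maximum value: since $\phi$ is an orientation-preserving diffeomorphism of $[-1,1]$ and $q_t$ attains its maximum $2t-1$ only at $x=0$, the composition $g$ attains its maximum $g(0)=\phi(2t-1)$ only at $x=0$. More importantly, I would use the boundary normalizations $\phi(-1)=-1$, $\phi(1)=1$ together with the fact that $q_t(\pm 1)=-1$: this gives $g(\pm 1)=\phi(-1)=-1$, so the minimum of $g$ is always $-1$ regardless of $t$, which unfortunately does not immediately pin down $t$. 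Instead, the key is to extract $t$ from the value $g(0)=\phi(2t-1)$ combined with more refined information.

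The cleaner route is to use analyticity of $\phi$ on a neighborhood of $[-1,1]$ and look at the behavior of $g$ near $x=0$. On $[0,1]$ we have $g(x)=\phi(-2tx^\alpha+2t-1)$. Writing $c=2t-1=q_t(0)$ and expanding $\phi$ analytically at $c$, we get $g(x)=\phi(c)-2t\,\phi'(c)\,x^\alpha + O(x^{2\alpha})$ as $x\to 0^+$ (using $\phi'(c)\neq 0$ since $\phi$ is a diffeomorphism). Thus from $g$ alone we read off $\phi(c)=g(0)$, and then from the leading asymptotics of $g(0)-g(x)$ we read off the product $2t\,\phi'(c)$ and the exponent $\alpha$ (the latter is anyway fixed). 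So suppose $\phi\circ q_t=\widetilde\phi\circ q_{\widetilde t}$ on $[-1,1]$; then $\phi(c)=\widetilde\phi(\widetilde c)$ where $c=2t-1$, $\widetilde c=2\widetilde t-1$, and $2t\,\phi'(c)=2\widetilde t\,\widetilde\phi'(\widetilde c)$. This alone is not enough; I need to exploit the full functional identity, not just the jet at $0$.

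The decisive step: from $\phi(-2tx^\alpha+2t-1)=\widetilde\phi(-2\widetilde t x^\alpha+2\widetilde t-1)$ for all $x\in[-1,1]$, substitute $s=x^\alpha\in[0,1]$ to obtain the identity $\phi(-2ts+2t-1)=\widetilde\phi(-2\widetilde t s+2\widetilde t-1)$ for all $s\in[0,1]$. Equivalently, $\phi(A_t(s))=\widetilde\phi(A_{\widetilde t}(s))$ where $A_t$ is affine. Since $A_t:[0,1]\to[2t-1,-1]$ and $A_{\widetilde t}:[0,1]\to[2\widetilde t-1,-1]$ are affine bijections onto their images with $A_t(1)=A_{\widetilde t}(1)=-1$, composing, we get $\phi = \widetilde\phi\circ A_{\widetilde t}\circ A_t^{-1}$ on the interval $[2t-1,-1]$ (oriented appropriately), where $A_{\widetilde t}\circ A_t^{-1}$ is an affine map $x\mapsto \tfrac{\widetilde t}{t}(x+1)-1$ fixing $-1$. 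Now I would analytically continue: both sides are analytic, so $\phi(x)=\widetilde\phi\big(\tfrac{\widetilde t}{t}(x+1)-1\big)$ holds on a neighborhood of $[2t-1,-1]$, hence on the connected domain $V$ by the identity theorem. Evaluating at $x=1$: $\phi(1)=1$ forces $\widetilde\phi\big(\tfrac{2\widetilde t}{t}-1\big)=1$; but $\widetilde\phi$ is injective (univalent) with $\widetilde\phi(1)=1$, so $\tfrac{2\widetilde t}{t}-1=1$, giving $\widetilde t=t$. Then the affine map is the identity and $\phi=\widetilde\phi$.

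The main obstacle is justifying the analytic continuation and the use of the normalization $\phi(1)=1$ to pin down $t$: one must be careful that the affine map $A_{\widetilde t}\circ A_t^{-1}$ carries the point $1$ (where $\phi$ is normalized) into the domain of $\widetilde\phi$, and that $t,\widetilde t$ being possibly complex (as elements of $\mathbb{C}\setminus\{0\}$) doesn't break the argument — here the identity theorem for the analytic functions $\phi,\widetilde\phi\in\mathcal{A}_V$ does the work, reducing everything to the single scalar equation $2\widetilde t/t-1=1$. A minor point is that $|x|^\alpha$ restricted to $[0,1]$ equals $x^\alpha$, so the substitution $s=x^\alpha$ is a genuine analytic change of variable on $(0,1]$ and extends continuously to $0$; this is what lets us pass from the a priori only-$C^1$-looking identity on $[-1,1]$ to an honest identity between analytic functions.
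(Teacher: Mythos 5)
Your proposal is correct and takes essentially the same route as the paper: factor out the non-analytic part by substituting $s=|x|^{\alpha}$, which reduces the hypothesis to the affine relation $\phi\circ A_t=\widetilde\phi\circ A_{\widetilde t}$ on $[0,1]$, and then use the endpoint normalization at $1$ to force $t=\widetilde t$ and hence $\phi=\widetilde\phi$. One small stylistic point worth noting: the paper writes the affine relation in the form $\phi^{-1}\circ\widetilde\phi(x)=\tfrac{t}{\widetilde t}(x+1)-1$; since $\widetilde\phi(1)=1=\phi(1)$, the left-hand side is automatically analytic near $x=1$, which cleanly sidesteps the domain concern you flag at the end about whether the affine image of $1$ lands inside $V$.
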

\begin{proof}
Suppose that $\phi\circ q_t=\widetilde{\phi}\circ
q_{\widetilde{t}},$ on $[-1,1].$ Then for all $y\in [0,1]$
we obtain
$$ A_t(y)=\phi^{-1}\circ \widetilde{\phi}\circ A_{\widetilde{t}}(y).$$
Take $$y=\frac{-x+2\widetilde{t}-1}{2\widetilde{t}},$$ where $x\in
[-1,1].$ It is not difficult to verify
\begin{eqnarray}
\frac{t}{\widetilde{t}}x+\frac{t}{\widetilde{t}}-1=\phi^{-1}\circ
\widetilde{\phi}(x).
\end{eqnarray}
Since $\phi^{-1}\circ \widetilde{\phi}(1)=1$ and
$\phi^{-1}\circ \widetilde{\phi}(-1)=-1$ then $t=\widetilde{t}$ and
$\phi^{-1}\circ \widetilde{\phi}=id.$
\end{proof}

By definition of $L$ for each $(\omega,v)$ in $
T_{\phi}\,\mathcal{A}_V\times \mathbb{C}$ we have
\begin{eqnarray*}
\D L(\phi,t)(\omega,0)&=&\frac{d}{du}[(\phi+u\omega)\circ q_t]|_{u=0}\\
&=&\frac{d}{du}[\phi\circ q_t+u(\omega\circ
q_t)]|_{u=0}\\&=&\omega\circ q_t
\end{eqnarray*}
and by the chain rule
\begin{eqnarray*}
 \D L(\phi,t)(0,v)&=&\frac{d}{du}[\phi\circ q_{t+uv}]|_{u=0}\\
&=&D\phi(q_t)\cdot\frac{d}{du}(q_{t+uv})|_{u=0}
\end{eqnarray*}
where $$\frac{d}{du}(q_{t+uv}(x))|_{u=0}=2v(1-x^{2r}).$$ So the
derivative of $L$ in $(\phi,t)\in \mathcal{A}_V\times
D_{\epsilon}([0,1])$ is the analytic operator
$$\D L(\phi,t):\mathcal{F}_{V}\times \mathbb{C}\rightarrow
\mathcal{T}_{\widetilde{V}}$$ given by
\begin{eqnarray}
\D L(\phi,t) (\omega,v)(x)=\omega\circ q_t(x)
+D\phi(q_t(x)).2v(1-x^{2r}),\label{'ecdis1'}
\end{eqnarray}
for all $x$ in $\widetilde{V}.$

In the following propositions we will prove some properties of the
differential $DL.$

\begin{proposition}\label{'propinj'}
Let $(\phi,t)\in \mathcal{A}_V\times (0,1].$ The operator $\D
L(\phi,t)$ is injective.
\end{proposition}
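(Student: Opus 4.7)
The plan is to assume $DL(\phi,t)(\omega,v)=0$ in $\mathcal{T}_{\widetilde V}$ and deduce $\omega=0$, $v=0$ by combining the explicit formula (\ref{'ecdis1'}) with an analytic continuation argument. From the hypothesis we get, for every $x\in\widetilde V$,
\begin{equation*}
\omega(q_t(x))+2v\bigl(1-x^{2r}\bigr)\,D\phi(q_t(x))=0.
\end{equation*}
The key algebraic observation is that, because $q_t(x)=-2tx^{2r}+2t-1$ and $t\neq 0$, one has the identity
\begin{equation*}
1-x^{2r}=\frac{q_t(x)+1}{2t},
\end{equation*}
so that the relation above rewrites as
\begin{equation*}
\omega(q_t(x))+\tfrac{v}{t}\bigl(q_t(x)+1\bigr)D\phi(q_t(x))=0\quad\text{for every }x\in\widetilde V.
\end{equation*}

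Next I would introduce the holomorphic function $g(y):=\omega(y)+\tfrac{v}{t}(y+1)D\phi(y)$ on $V$. The previous display says $g\circ q_t\equiv 0$ on $\widetilde V$. Since $q_t$ is a non-constant holomorphic map and $\widetilde V$ is open, the open mapping theorem implies that $q_t(\widetilde V)$ is a non-empty open subset of the connected open set $V$. Therefore $g$ vanishes on an open subset of $V$, and by the identity theorem $g\equiv 0$ on the whole of $V$, i.e.
\begin{equation*}
\omega(y)=-\tfrac{v}{t}(y+1)D\phi(y)\quad\text{for every }y\in V.
\end{equation*}

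To conclude, I evaluate this identity at the boundary point $y=1\in[-1,1]\subset V$. Since $\omega$ lies in the tangent space $\mathcal{F}_V$ to the affine space $\mathcal{A}_V$ at $\phi$, we have $\omega(1)=0$, so $-\tfrac{2v}{t}D\phi(1)=0$. Because $\phi$ is a diffeomorphism of $[-1,1]$ the derivative $D\phi(1)$ is strictly positive, and $t\in(0,1]$ is nonzero, so we must have $v=0$. Feeding this back into the identity on $V$ gives $\omega\equiv 0$. Hence $(\omega,v)=(0,0)$ and $DL(\phi,t)$ is injective. The proof is essentially a two-line calculation followed by the identity principle; the only point requiring attention is that the algebraic identity $1-x^{2r}=(q_t(x)+1)/(2t)$ is exactly what lets one encode the $v$-direction as a condition on $q_t(x)$ alone, and I do not foresee any real obstacle.
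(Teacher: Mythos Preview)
Your proof is correct and follows essentially the same route as the paper: both rewrite the kernel condition using the identity $1-x^{2r}=(q_t(x)+1)/(2t)$, extend the resulting relation $\omega(y)=-\tfrac{v}{t}(y+1)D\phi(y)$ by analytic continuation, evaluate at $y=1$ to force $v=0$, and then conclude $\omega\equiv 0$. The only cosmetic difference is that the paper continues the identity from the real interval $[-1,q_t(0)]$ to $[-1,1]$ by real-analyticity, whereas you go through the open mapping theorem on $q_t(\widetilde V)$; both are valid instances of the identity principle.
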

\begin{proof}
Suppose that $\D L(\phi,t) (\omega,v)(z)=0,$ for all $z\in
\widetilde{V}.$ Then from the Eq.~(\ref{'ecdis1'}) we have
\begin{eqnarray}
\omega\circ q_t(z)&=&-D\phi(q_t(z)).2v(1-z^{2r})\\
\omega\circ
q_t(z)&=&-D\phi(q_t(z)).\frac{v}{t}(q_t(z)+1)\label{'eqinj'}
\end{eqnarray}
From Eq.~(\ref{'eqinj'}) we have that for every $y\in
[-1,q_t(0)]\subset [-1,1]$
\begin{eqnarray}
\omega(y)=-D\phi(y).\frac{v}{t}(y+1),\label{'eqinj2'}
\end{eqnarray}
Since $\omega(y)$ and $D\phi(y).\frac{v}{t}(y+1),$ are analytic on
$[-1,1]$ we have that the Eq.~(\ref{'eqinj2'}) is satisfied for
every $y\in [-1,1].$ If we take $y=1$ we obtain
$$\omega(1)=-D\phi(1)\frac{v}{t}(2)$$ and since that $\omega(1)=0$ and $D\phi(1)\neq 0$
we have $v=0.$ On the other hand the condition $v=0$
in~(\ref{'eqinj2'}) and the analycity of $\omega$ on $V$ imply
$\omega(y)=0,$ for all $y\in V.$ So $\D L(\phi,t)$ is injective.
\end{proof}

\begin{lemma}\label{'lemdenseL'}
Let $(\phi,t)\in \mathcal{A}_V\times (0,1].$ the image of the
operator
$$\D L(\phi,t):\mathcal{F}_V\times \mathbb{C}\rightarrow
\mathcal{T}_{\widetilde{V}}$$ is dense.
\end{lemma}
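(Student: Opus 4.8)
The goal is to show that the image of $\D L(\phi,t)\colon \mathcal{F}_V\times\mathbb{C}\to\mathcal{T}_{\widetilde{V}}$ is dense, where from Eq.~(\ref{'ecdis1'}) we have $\D L(\phi,t)(\omega,v)(x)=\omega\circ q_t(x)+D\phi(q_t(x))\cdot 2v(1-x^{2r})$. The plan is to argue by duality: a closed subspace of a Banach space is the whole space iff every bounded linear functional vanishing on it is zero, so I would take a continuous linear functional $\Lambda$ on $\mathcal{T}_{\widetilde{V}}$ that annihilates the image of $\D L(\phi,t)$ and show $\Lambda=0$. Since $\mathcal{T}_{\widetilde{V}}$ is a space of holomorphic functions of the form $\psi(x^{2r})$ vanishing at $\pm 1$, such a functional is represented (via a Cauchy-kernel/residue argument, as is standard for spaces of holomorphic functions with sup norm) by integration against a measure or analytic kernel supported on (a curve near) the boundary of $\widetilde{V}$, or equivalently by a germ of holomorphic function near the complement.

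**Exploiting the two kinds of test vectors.** First I would feed $\D L(\phi,t)$ the vectors $(\omega,0)$: the hypothesis $\Lambda\circ\D L(\phi,t)=0$ gives $\Lambda(\omega\circ q_t)=0$ for all $\omega\in\mathcal{F}_V$. Because $q_t\colon\widetilde{V}\to V$ is a proper holomorphic map of degree $2r$ and $\omega$ ranges over a large space of holomorphic functions on $V$ (all holomorphic maps fixing $\pm1$, suitably), the functions $\omega\circ q_t$ already span a large subspace of $\mathcal{T}_{\widetilde{V}}$ — indeed every element of $\mathcal{T}_{\widetilde{V}}$ is of the form $\psi(x^{2r})$, and $q_t(x)=A_t(x^{2r})$ with $A_t$ affine invertible, so $\psi(x^{2r})=\psi\circ A_t^{-1}\circ q_t(x)=\tilde\omega\circ q_t(x)$; the only constraint is that $\tilde\omega=\psi\circ A_t^{-1}$ need not fix $\pm1$ nor lie in $\mathcal{F}_V$. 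So the composition operator $\omega\mapsto\omega\circ q_t$ maps $\mathcal{A}_V$ (or $\mathcal{F}_V$) onto a subspace of $\mathcal{T}_{\widetilde{V}}$ of finite, explicitly controlled codimension — morally codimension two coming from the normalizations at $\pm1$ and the constant. Thus $\Lambda$ is determined by its action on a finite-dimensional complement, and it remains to kill those finitely many coordinates using the extra vector $(0,v)$.

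**Closing the argument with the parameter direction.** The second family of test vectors $(0,v)$ yields $\Lambda\big(D\phi(q_t(\cdot))\cdot 2v(1-(\cdot)^{2r})\big)=0$, i.e. $\Lambda\big(D\phi(q_t(x))(1-x^{2r})\big)=0$. The key computational point, exactly as in the injectivity proof (Proposition~\ref{'propinj'}), is the identity $2t(1-x^{2r})=q_t(x)+1$, so this reads $\Lambda\big(D\phi(q_t(x))\cdot(q_t(x)+1)\big)=0$, i.e. $\Lambda\big((g\circ q_t)\big)=0$ where $g(y)=D\phi(y)(y+1)$ is a fixed nonzero holomorphic function on $V$ with $g(-1)=0$. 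Combined with $\Lambda(\omega\circ q_t)=0$ for all $\omega$ fixing $\pm1$, one sees $\Lambda$ vanishes on $\{(\omega+\lambda g)\circ q_t\}$. I would then check that $\{\omega+\lambda g:\omega\in\mathcal{F}_V,\ \lambda\in\mathbb{C}\}$, after composing with $q_t$, already exhausts all of $\mathcal{T}_{\widetilde{V}}$: the normalization defects at $x=1$ and $x=-1$ are two linear conditions, and the one extra degree of freedom $g$ (which is nonzero at $1$ and zero at $-1$, by $\phi(1)=1,D\phi(1)\neq0$ and $g(-1)=0$) together with, say, adding constants via $\omega$, covers them — this is the place where the precise definitions of $\mathcal{F}_V$, $\mathcal{A}_V$ and the boundary normalizations must be pinned down. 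The main obstacle I anticipate is precisely this bookkeeping: identifying the exact codimension of the range of the composition operator $\omega\mapsto\omega\circ q_t$ and verifying that the single vector $g$ plus the freedom inside $\mathcal{F}_V$ suffices to fill it, all while keeping track of the affine (not linear) nature of $\mathcal{A}_V$. Once that linear-algebra step is done, $\Lambda=0$ follows and density is established.
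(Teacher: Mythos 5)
Your duality argument (Hahn--Banach: a functional $\Lambda$ annihilating the image must vanish) is a different wrapper than the paper's proof, which instead gives an explicit preimage on a dense set. The paper first notes that polynomials are dense in $\mathcal{T}_{\widetilde V}$, and then, for a polynomial $\widetilde\omega=\psi(x^{2r})=\beta\circ q_t$ with $\beta=\psi\circ A_t^{-1}$, writes down the preimage directly:
$\omega(y)=\beta(y)-D\phi(y)\,\dfrac{\beta(1)}{2D\phi(1)}(1+y)$ and $v=\dfrac{\beta(1)\,t}{2D\phi(1)}$, which satisfies $\D L(\phi,t)(\omega,v)=\widetilde\omega$ by the identity $2t(1-x^{2r})=q_t(x)+1$. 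So the two proofs share their core: both isolate $g(y)=D\phi(y)(y+1)$, with $g(-1)=0$ and $g(1)=2D\phi(1)\neq0$, as the one direction needed to correct the boundary normalization, and both exploit the same identity. Where you diverge is in the accounting: you estimate the codimension of $\{\omega\circ q_t:\omega\in\mathcal F_V\}$ as ``morally two,'' but in fact the constraint $\beta(-1)=0$ comes for free, since $A_t^{-1}(-1)=1$ and $\psi(1)=\widetilde\omega(1)=0$; only $\beta(1)=0$ is a real restriction, so the codimension is exactly one, matched precisely by the single scalar parameter $v$. This is why no ``constants via $\omega$'' are needed (and indeed cannot be used, since $\omega\in\mathcal F_V$ must vanish at $\pm1$). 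With that one correction your route closes; the paper's constructive version is shorter because it skips the Hahn--Banach layer and dispenses with the codimension bookkeeping by simply exhibiting the answer.
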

\begin{proof} It is no difficult to prove that the set of polynomial vector
fields is dense in $\mathcal{T}_{\widetilde{V}}$ (see~\cite{Ju}). So
will be sufficient to show that for all polynomial vector field
$\widetilde{\omega}\in \mathcal{T}_{\widetilde{V}},$ there exists
$(\omega,v)\in \mathcal{F}_V\times \mathbb{C}$ such that
$$\D L(\phi,t) (\omega,v)=\widetilde{\omega}.$$ Since
$\widetilde{\omega}$ is the form $\widetilde{\omega}=\psi(x^{2r}),$
where $\psi$ is a polynomial vector field in a neighborhood of
$[0,1]$, we can write this as $\widetilde{\omega}=\beta\circ q_t,$
where $\beta=\psi\circ A_t^{-1}$ is a polynomial vector field. Take
$$\omega(y)=\beta(y)-D\phi(y)\frac{\beta(1)}{2D\phi(1)}(1+y)$$ and
$$v=\frac{\beta(1)t}{2D\phi(1)}.$$
\end{proof}

Remember that the continuous linear operator $T$ from a  Banach
space $E$ to a  Banach space $F$ is compact if, for each bounded
sequence $\{x_n\}$ in $E,$ the sequence $\{Tx_n\}$ contains a
convergence subsequence in $F.$

\begin{lemma}\label{'lemcompactness'}
Let $(\phi,t)\in \mathcal{A}_V\times D_{\epsilon}([0,1]).$ The
operator
$$\D L(\phi,t):\mathcal{F}_V\times \mathbb{C}\rightarrow
\mathcal{T}_{\widetilde{V}}$$ is compact.
\end{lemma}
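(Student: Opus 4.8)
The plan is to show that $\D L(\phi,t)$ is compact by exhibiting it as a composition of a bounded operator with an operator that is manifestly compact, exploiting the gap between the domain $V$ and the strictly smaller target $\widetilde{V}\Subset V$ together with the normality guaranteed by Montel's theorem. Recall from Eq.~(\ref{'ecdis1'}) that
$$\D L(\phi,t)(\omega,v)(x)=\omega\circ q_t(x)+D\phi(q_t(x))\cdot 2v(1-x^{2r}),$$
so $\D L(\phi,t)$ splits as a sum $T_1+T_2$, where $T_1(\omega,v)=\omega\circ q_t$ depends only on $\omega$ and $T_2(\omega,v)=2v\,D\phi(q_t(\cdot))(1-(\cdot)^{2r})$ depends only on $v$. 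Since a sum of compact operators is compact, it suffices to treat $T_1$ and $T_2$ separately.

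First I would handle $T_2$. It factors through the one-dimensional space $\mathbb C$ via $v\mapsto v\cdot w$, where $w(x)=2D\phi(q_t(x))(1-x^{2r})$ is a fixed element of $\mathcal{T}_{\widetilde V}$ (it is holomorphic on a neighborhood of $\overline{\widetilde V}$ because $q_t(\widetilde V)\Subset V$ and $\phi\in\mathcal A_V$, it is of the form $\psi(x^{2r})$ after absorbing $q_t=A_t\circ\tau$, and it vanishes at $\pm1$ since $q_t(\pm1)\in\{-1,1\}$ wait — rather, $1-(\pm1)^{2r}=0$). Any bounded linear operator with finite-dimensional range is compact, so $T_2$ is compact.

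Next I would handle $T_1\colon \omega\mapsto \omega\circ q_t$. Take a bounded sequence $\{\omega_n\}$ in $\mathcal{F}_V$; by Montel's theorem (the $\omega_n$ are uniformly bounded holomorphic functions on $V$) a subsequence converges uniformly on compact subsets of $V$, in particular uniformly on the compact set $\overline{q_t(\widetilde V)}\subset V$. Hence $\omega_{n_k}\circ q_t$ converges uniformly on $\overline{\widetilde V}$, i.e. in the norm of $\mathcal{T}_{\widetilde V}$. (One must also check the limit lies in $\mathcal T_{\widetilde V}$: it is holomorphic with continuous extension to $\overline{\widetilde V}$, of the form $\psi(x^{2r})$ near $[-1,1]$ because each $\omega_n\circ q_t$ is and this structure is preserved under locally uniform limits, and it vanishes at $\pm1$.) Therefore $T_1$ is compact.

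Combining, $\D L(\phi,t)=T_1+T_2$ is compact, which proves the lemma. The only place requiring a little care — the \emph{main obstacle}, such as it is — is verifying that the relevant limits and fixed functions genuinely land in the constrained space $\mathcal T_{\widetilde V}$ (the symmetry condition $\omega=\psi(x^{2r})$ near $[-1,1]$ and the vanishing at the endpoints); both follow because these are closed conditions under uniform convergence and because $q_t$ already has the form $A_t\circ\tau$ with $\tau(z)=z^{2r}$. Everything else is a routine application of Montel's theorem and the finite-rank criterion.
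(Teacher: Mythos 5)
Your proof is correct, and it reaches the conclusion by a somewhat different organization than the paper's, even though the engine in both is the same: the strict containments $\widetilde V\Subset V$ and $q_t(\widetilde V)\Subset V$ together with Montel's theorem. The paper does not decompose $\D L(\phi,t)$ at all. Instead it introduces an intermediate open set $U$ with $\widetilde V\Subset U\Subset V$ and $q_t(U)\Subset V$, shows directly that the full image sequence $\D L(\phi,t)(\omega_i,v_i)$ is uniformly bounded on $U$ (using $|\omega_i\circ q_t|_U\le|\omega_i|_V$ plus boundedness of $v_i$ and of $D\phi$ on $q_t(U)$), and then invokes normality of this bounded family of holomorphic maps on $U$ to extract a subsequence converging uniformly on $\widetilde V$. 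Your splitting $\D L(\phi,t)=T_1+T_2$, with $T_2$ manifestly of rank one and $T_1$ handled by applying Montel to the inputs $\omega_n$ on $V$ and then composing with $q_t$, is structurally a bit cleaner: it makes transparent why the $v$-dependence is harmless, and it achieves the same localization that the paper gets by passing to the auxiliary domain $U$ without having to introduce $U$ explicitly. One further small advantage of your version is that you explicitly record the verification that the limit lands in the constrained space $\mathcal T_{\widetilde V}$ (the $\psi(x^{2r})$ form and vanishing at $\pm1$ are closed under uniform limits), a point the paper leaves implicit. Minor nit: your parenthetical $q_t(\pm1)\in\{-1,1\}$ is a red herring (in fact $q_t(\pm1)=-1$), but you correctly self-correct to the relevant fact $1-(\pm1)^{2r}=0$, so nothing is affected.
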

\begin{proof} Let $\{(\omega_i,v_i)\}\subset \mathcal{F}_V\times \mathbb{C}
$ be a bounded sequence, this is there is a constant $B>0$ such that
$$|\omega_i|_{V},|v_i| < B,$$
for all $i>0.$ By definition of $L,$ we have that $\{\D
L(\phi,t)(\omega_i,v_i)\}$ is a sequence of analytic vector fields
on $\widetilde{V}.$ We took $\widetilde{V}$ compactly contained in
$V$ such that $q_t(\widetilde{V})$ is compactly contained in $V.$
Now we take a open subset $U\Supset\widetilde{V}$ compactly
contained in $V$ such that $q_t(U)$ is compactly contained in $V.$
Then
$$|\omega_i|_{V}=\sup_{x\in V} |\omega_i(x)|\geq \sup_{x\in q_t(U)} |\omega_i(x)|=
\sup_{y\in U} |\omega_i\circ q_t(x)|= |\omega_i\circ q_t|_{U}.$$ So
$\omega_i\circ q_t$ is bounded on $U$ by $B.$ Since $D\phi$ is
bounded in $q_t(U)\subset V,$ by the Eq.~(\ref{'ecdis1'}) there exists
$C>0$ such that
$$|\D L(\phi,t)(\omega_i,v_i)|_{U}< C$$ for all $i>0.$
Since a uniformily bounded sequence of analytic maps in $U$ is a normal family in $U,$ all
subsequences of $\{\D L(\phi,t)(\omega_i,v_i)\}$ has a convergence
subsequence on $\widetilde{V}$.
\end{proof}

\section{Complex renormalization operators $\mathcal{R}$ and $\tilde{\mathcal{R}}$}

Fixing the critical exponent $\alpha=2r$ where $r\in \mathbb{N},$ we
are going to consider the renormalization operator $\mathcal{R}_{\alpha},$
defined in the Section~$1$, as an operator acting on the space of
holomorphic functions. In the last part of this section, we show
that when $\alpha>1$ is even, the spectrum of $D
\mathcal{R}_{\alpha}$ and $D\mathcal{\widetilde{R}}_{\alpha}$
coincides in the respective fixed points of the renormalization
operators $\mathcal{R}_{\alpha}$ and
$\mathcal{\widetilde{R}}_{\alpha}.$

\subsection{Complex operator $\mathcal{R}_{\alpha}$}\label{'seccomplexR'}

Based in real methods Marco Martens~\cite{MM} proved the existence
of the fixed points of the renormalization operators
$\mathcal{R}_{\alpha},$ of any combinatorial type, acting in the
space of smooth unimodal maps with critical exponent $\alpha>1.$
From definition of the renormalization operator
$\mathcal{R}_{\alpha}$ we have that it has a fixed point, denoted by
 $f^*,$ satisfying
$$f^*(z)=\frac{1}{p}{f^*}^{q}(pz),$$
for some $p\in(-1,1)$ such that ${(f^*)}^{q}(p)=p.$

Given an analytic function $f:V\rightarrow \mathbb{C},$ where $V$ is
an open set, define the open set
$$\mathcal{D}^{n}_{V}(f):=\bigcap^{n-1}_{i=0}f^{-i}V.$$
Given a subset $V\subset\mathbb{C}$ and $\lambda\in \mathbb{C},$
denote by $\lambda V:=\{\lambda x:x\in V\}.$

As a consequence of the complex bounds of Sullivan~\cite{Su}, fixing
the critical exponent $\alpha=2r$ where $r\in \mathbb{N},$ for all
$\varepsilon>0$ small enough there exists $N_1=N_1(\varepsilon)>0$
large enough such that the domains $D_{\varepsilon/2}\Subset
D_{\varepsilon}\Subset D_{2\varepsilon},$ satisfying:
\begin{itemize}
  \item $f^*$ has a continuous extension to $\overline{D}_{\varepsilon}$ which is holomorphic in
  $D_{\varepsilon},$ and has a unique critical point in e $D_{\varepsilon}.$
  \item we have $$p^{N_1}D_{2\varepsilon}\Subset
  \mathcal{D}^{q^{N_1}}_{D_{\varepsilon/2}}(f^*),$$ in other words, we can iterate $f^*:D_{\varepsilon/2}\rightarrow
  \mathbb{C}$ at least $q^{N_1}$ times on a domain $p^{N_1}\overline{D}_{2\varepsilon}.$
\end{itemize}

Now we can define the complex renormalization operator acting on the
holomorphic functions close enough to $f^*.$ Observe that it is
possible to choose $\gamma_1 =\gamma_1(\varepsilon,N_1)>0$ small
enough such that for each $f$ in the ball of center $f^*$ and radius
$\gamma_1$ denoted by
$$B_{\varepsilon/2}(f^*,\gamma_1):=\{f\in
\mathcal{U}_{D_{\varepsilon/2}},|f-f^*|<\gamma_1\},$$ the following
is satisfied:
\begin{itemize}
  \item there exists an analytic continuation $p_f$ of the periodic point $p$
  of $f^*,$ this is $f^q(p_f)=p_f$ and $p_f\sim p.$
  \item we have $p_f D_{2\varepsilon}\Subset
  D^{q^{N_1}}_{\varepsilon/2}(f).$
\end{itemize}
We define the complex analytic operator
$\mathcal{\widehat{R}}_{2r}:B_{\varepsilon/2}(f^*,\gamma_1)\rightarrow
\mathcal{U}_{D_{2\varepsilon}}$ as
\begin{eqnarray}
\mathcal{\widehat{R}}_{2r}(f)(z):=
\frac{1}{p_f}f^{q^{N_1}}(p_fz).\label{'operR'}
\end{eqnarray}
So we define the complex analytic extension of the renormalization
operator $\mathcal{R}_{2r}$ as
$$\mathcal{R}_{2r}:=i\circ \mathcal{\widehat{R}}_{2r},$$ where $i:\mathcal{U}_{D_{2\varepsilon}}\rightarrow\mathcal{U}_{D_{\varepsilon}}$ is the inclusion. Note that $i$ is a compact linear transformation.
\begin{remark}
Notice that we are free to choose $\varepsilon>0,$  $N_1>0,$ and
$\gamma_1>0.$ In the section~\ref{'operRenovo'} we will do a
convenient to choose those constants.
\end{remark}

Edson de Faria, W. de Melo and A. Pinto~\cite{E-W-P}, with the help
of real and complex a priori bounds of Sullivan~\cite{Su} and the
result of hyperbolicity of Lyubich~\cite{Lyu99} (also
see~\cite{D2}), proved the hyperbolicity of the fixed point of the
renormalization operator with respect an iterate of the
renormalization operator acting on the space
$\mathcal{U}_{D_{\varepsilon/2}}$ for some $\varepsilon>0.$ More
precisely the Theorem  2.4 em~\cite{E-W-P} claims:

\begin{theorem}~\cite{E-W-P}\label{'hyperbolicity'}
For $\varepsilon>0$ small enough, there exists a positive number
$N_1=N_1(\varepsilon)>0$ and $\gamma_1>0$ with the following
property. The real analytic compact operator
$\mathcal{R}_{2r}:B_{\varepsilon/2}(f^*,\gamma_1)\rightarrow
\mathcal{U}_{D_{\varepsilon/2}},$ defined by Eq. (\ref{'operR'}),
has a unique hyperbolic fixed point $f^*=\phi^*\circ q_{t^*} \in
B(f^*,\gamma_1)$ with codimension one stable manifold.
\end{theorem}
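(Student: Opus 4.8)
The plan is to obtain the statement by assembling three facts. First, the existence of an analytic fixed point $f^*$ of $\mathcal{R}_{2r}$ with the prescribed periodic combinatorics, which follows from the a priori bounds of Sullivan~\cite{Su} together with Martens~\cite{MM}. Second, Lyubich's theorem~\cite{Lyu99} that, in a suitable space of germs of quadratic-like maps (equivalently, analytic unimodal maps of even order), the renormalization fixed point is hyperbolic, with a one-dimensional unstable direction and the rest of the spectrum strictly inside the unit circle. Third, the real and complex a priori bounds of Sullivan~\cite{Su}, which are precisely what is needed to realise the renormalization — iterated $q^{N_1}$ times and rescaled — as a \emph{compact} analytic operator on the Banach space $\mathcal{U}_{D_{\varepsilon/2}}$. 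Compactness is the decisive structural input: it forces the spectrum of $D\mathcal{R}_{2r}(f^*)$ to be discrete with $0$ the only accumulation point, so that hyperbolicity becomes a statement about finitely many eigenvalues.

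First I would check that the operator is well defined and compact. By the complex bounds for $f^*$ recalled above, for $\varepsilon$ small there is $N_1=N_1(\varepsilon)$ with $p^{N_1}\overline{D}_{2\varepsilon}\Subset \mathcal{D}^{q^{N_1}}_{D_{\varepsilon/2}}(f^*)$; then $\gamma_1$ is chosen small enough that the analytic continuation $p_f$ of the periodic point and the corresponding inclusion of domains persist for every $f$ in the ball $B_{\varepsilon/2}(f^*,\gamma_1)$. This makes $\widehat{\mathcal{R}}_{2r}\colon B_{\varepsilon/2}(f^*,\gamma_1)\to \mathcal{U}_{D_{2\varepsilon}}$ analytic, and since $\mathcal{R}_{2r}=i\circ\widehat{\mathcal{R}}_{2r}$ with $i\colon\mathcal{U}_{D_{2\varepsilon}}\hookrightarrow\mathcal{U}_{D_{\varepsilon/2}}$ compact by Montel's theorem, both $\mathcal{R}_{2r}$ and $A:=D\mathcal{R}_{2r}(f^*)$ are compact.

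The core of the argument is to identify the spectrum of $A$. Lyubich's hyperbolicity theorem provides, in the quadratic-like setting, a single eigenvalue $\delta>1$ outside the closed unit disc — the universal scaling eigenvalue attached to the combinatorics — with eigenvector a holomorphic vector field, and the remainder of the spectrum contained in $\{|z|<1\}$. One then argues that the nonzero part of the spectrum of $A$ is insensitive to which of the nested domains $D_{\varepsilon/2}\Subset D_{\varepsilon}\Subset D_{2\varepsilon}$ is used to define the ambient Banach space: indeed, if $Av=\lambda v$ with $\lambda\neq 0$, then $v=\lambda^{-1}Av$ already lies in $\mathcal{U}_{D_{2\varepsilon}}$, so eigenvectors automatically extend to the larger set, and the restriction maps intertwine the various realisations of $\mathcal{R}_{2r}$ up to the compact inclusions, which do not change nonzero spectrum. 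Hence the spectrum of $A$ coincides with the one computed by Lyubich; in particular it misses the unit circle, $f^*$ is hyperbolic, and the splitting is the $\delta$-eigenline (unstable, dimension one) plus its spectral complement (stable, codimension one).

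Finally, the stable and unstable manifold theorem for a hyperbolic fixed point of a $C^\omega$ (hence $C^1$) map of a Banach space — via the Hadamard graph transform or Irwin's fixed-point argument — yields a local one-dimensional analytic unstable manifold and a local codimension-one analytic stable manifold through $f^*$. Uniqueness in $B(f^*,\gamma_1)$ is immediate once hyperbolicity is established: $I-A$ is invertible, so $f^*$ is an isolated zero of $f\mapsto \mathcal{R}_{2r}(f)-f$, and shrinking $\gamma_1$ makes it the only fixed point in the ball; writing $f^*=\phi^*\circ q_{t^*}$ with $\phi^*\in\diff$ and $t^*\in(0,1]$ is just a restatement of the normal form of the fixed point. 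I expect the genuine difficulty to be the spectral identification of the third paragraph: transporting Lyubich's statement — phrased in the language of quadratic-like maps and the renormalization horseshoe — into the present Banach-space-of-holomorphic-functions setting, while keeping track of the fixed number $N_1$ of renormalization steps built into $\mathcal{R}_{2r}$. This is exactly the technical content of~\cite[Theorem~2.4]{E-W-P}, which we invoke.
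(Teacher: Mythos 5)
The paper does not prove this theorem: it is stated verbatim as a citation to \cite[Theorem~2.4]{E-W-P}, with the surrounding text merely remarking that de~Faria--de~Melo--Pinto obtained it by combining Sullivan's a priori bounds with Lyubich's hyperbolicity result. Your proposal correctly identifies exactly those ingredients, sketches how they assemble (compactness via Montel, spectral identification carried from the quadratic-like setting, stable/unstable manifold theorem in Banach spaces), and then explicitly invokes \cite[Theorem~2.4]{E-W-P} for the technical content --- which is precisely what the paper does, so the two treatments agree; your sketch is an accurate and honest elaboration of why the citation suffices rather than an independent proof.
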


\subsection{Relating the complex operators $\mathcal{R}_{2r}$
and $\widetilde{\mathcal{R}}_{2r}$}\label{'operRenovo'}

Now consider the critical exponent $\alpha=2r,$ where $r\in
\mathbb{N}.$ With the same notation of the Section~\ref{'seccomplexR'}, choose
$\delta>0,$ $\widetilde{N}_1$. Choose  $\widetilde{\gamma}_1$ such that $\tilde{\mathcal{R}}$ is defined in
$$\widetilde{B}((\phi^\star,t^\star),\tilde{\gamma}_1):=\{(\phi,t))\in
\mathcal{A}_{D_{\delta/2}}\times
\mathbb{C},|(\phi,t)-({\phi}^{*},t^{*})|<\widetilde{\gamma}_1\},$$
where $({\phi}^{*},t^{*})$ is the unique fixed point of $\tilde{\mathcal{R}}_{2r}$. Such fixed point exists due Remark \ref{remext}. The uniqueness follows from the uniqueness of the fixed point of $\mathcal{R}$ and the injectivity of $L$.

Let $\varepsilon_0$ be such that
$$q_{t^*}(\overline{D}_{2\varepsilon_0})\subseteq D_{\delta/2}.$$
And choose $\varepsilon<\varepsilon_0,$ $N_1$ and $\gamma_1>0$ as in
the Theorem~\ref{'hyperbolicity'}. Then we choose
$N=N_1.\widetilde{N_1}$ and consider this iteration.

Since $\mathcal{\widetilde{\widetilde{R}}}_{2r}$ is analytic there
exists $\widetilde{C}>1$ such that
$$\mathcal{\widetilde{\widetilde{R}}}_{2r}(\widetilde{B}_{\delta/2}
(({\phi}^{*},t^{*}),\widetilde{\gamma}))\subseteq
\widetilde{B}_{\delta}(({\phi}^{*},t^{*}),\widetilde{C}\widetilde{\gamma}),$$
for all $\widetilde{\gamma}<\widetilde{\gamma}_1.$ Then there exists
$\widetilde{\gamma}_2$ such that if
$$|(\phi,t)-({\phi}^{*},t^{*})|_{\mathcal{A}_{\delta/2}\times \mathbb{C}}<\widetilde{C}\widetilde{\gamma}_2$$
then $\phi\circ q_t$ is defined in $\overline{D}_{2\varepsilon}.$ In
particular for each $\widetilde{\gamma}\leq \widetilde{\gamma}_2$ we
have defined the following composition transformations
$$L:\widetilde{B}_{\delta/2}(({\phi}^{*},t^{*}),\widetilde{\gamma})\rightarrow \mathcal{U}_{D_{2\varepsilon}},$$
and
$$L:\widetilde{B}_{\delta/2}(({\phi}^{*},t^{*}),\widetilde{C}\widetilde{\gamma})\rightarrow
\mathcal{U}_{D_{2\varepsilon}}.$$ Let $\widetilde{\gamma}_3>0$ such that for all
$\widetilde{\gamma}\leq \widetilde{\gamma}_3$ we have
$$L(\widetilde{B}_{\delta/2}(({\phi}^{*},t^{*}),\widetilde{\gamma}))
\subseteq B_{2\varepsilon}({\phi}^{*}\circ q_{t^*},\gamma_1).$$ Choose
$\widetilde{\gamma}\leq \min
\{\widetilde{\gamma}_1,\widetilde{\gamma}_2,\widetilde{\gamma}_3\}.$
So we stated the following results.
\begin{proposition}
The following diagram commutes.
\vspace{1em}
$$\xymatrix@R=1cm{\widetilde{B}_{\delta/2}(({\phi}^{*},t^{*}),\widetilde{\gamma})
\ar[r]^{\mathcal{\widetilde{\widetilde{R}}}} \ar[d]^{L}
\ar@/_40pt/[dd]_{L=k\circ L } \ar@/^30pt/[rr]_{j\circ
\mathcal{\widetilde{\widetilde{R}}}=\mathcal{\widetilde{R}}}&
\widetilde{B}_{\delta}(({\phi}^{*},t^{*}),\widetilde{C}\widetilde{\gamma})
\ar[r]^{j} &
\widetilde{B}_{\delta/2}(({\phi}^{*},t^{*}),\widetilde{C}\widetilde{\gamma})
\ar[d]_{L} \ar@/^40pt/[dd]^{L=k\circ L }\\
 B_{2\varepsilon}({\phi}^{*}\circ q_{t^*},\gamma_1)
 \ar[d]^{k} & & \mathcal{U}_{D_{2\varepsilon}}
 \ar[d]_{k} \\
 B_{\varepsilon/2}({\phi}^{*}\circ q_{t^*},\gamma_1)
 \ar[r]^{\mathcal{\hat{R}}} \ar@/_25pt/[rr]^{i\circ \mathcal{\hat{R}}=\mathcal{R}} &
\mathcal{U}_{D_{\varepsilon}}
 \ar[r]^{i} & \mathcal{U}_{D_{\varepsilon/2}}
 }$$
\vspace{1em}\\In particular we have that $\mathcal{R}\circ L=L\circ
\mathcal{\widetilde{R}}$ on $\widetilde{B}_{\delta/2}(({\phi}^{*},t^{*}),\widetilde{\gamma}).$
\end{proposition}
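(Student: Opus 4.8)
The plan is to verify that the hexagonal diagram commutes piece by piece, which reduces ultimately to the elementary identity already recorded in Remark~\ref{'relacL'}, namely $L\circ\mathcal{\widetilde{R}} = \mathcal{R}\circ L$ wherever both sides make sense. First I would observe that, by the very definitions made just above the statement, each arrow in the diagram is a restriction of one of the maps $L$, $\mathcal{\widetilde{\widetilde{R}}}_{2r}$, $\mathcal{\widehat{R}}_{2r}$, or one of the natural inclusions $i,j,k$; and the constants $\widetilde{\gamma}_1,\widetilde{\gamma}_2,\widetilde{\gamma}_3$ were chosen precisely so that every composition of arrows appearing as a path in the diagram is defined on the indicated source ball and lands in the indicated target. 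So the content is entirely the commutativity of the two squares and the two triangular ``cap'' regions, not the well-definedness.

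The commutativity of the triangles is immediate: the left and right outer arrows labelled $L = k\circ L$ commute by the definition of the inclusion $k:\mathcal{U}_{D_{2\varepsilon}}\to\mathcal{U}_{D_{\varepsilon/2}}$ together with the fact that $L$ is the same formula $L(\phi,t)=\phi\circ q_t$ regardless of the domain of holomorphy we record it on (restricting the codomain and then including back is the identity on the relevant function). Likewise the two curved arrows at the top and bottom, $\mathcal{\widetilde{R}} = j\circ\mathcal{\widetilde{\widetilde{R}}}$ and $\mathcal{R} = i\circ\mathcal{\widehat{R}}$, commute with the corresponding two-step paths by the \emph{definitions} of $\mathcal{\widetilde{R}}_{\alpha}$ and $\mathcal{R}_{2r}$ given earlier. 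That leaves the single genuine square: the outer rectangle with top arrow $\mathcal{\widetilde{R}}$, bottom arrow $\mathcal{R}$, and vertical arrows $L$. For this I would invoke Remark~\ref{'relacL'}: for a pair $(\phi,t)$ coming from $\mathcal{U}$ one has $L\circ\mathcal{\widetilde{R}}(\phi,t)=\mathcal{R}\circ L(\phi,t)$. The point is that $\mathcal{\widetilde{R}}$ here is the $N=N_1\widetilde{N}_1$-fold iterate and $\mathcal{R}$ the corresponding $q^{N_1}$-fold first-return composition, but the identity of Remark~\ref{'relacL'} iterates: applying it $N$ times (and using that the relevant cycles and periodic points match up, which is exactly what was arranged in choosing $\varepsilon_0$, $\varepsilon$, $N_1$) gives $L\circ\mathcal{\widetilde{R}}^N=\mathcal{R}^{(N_1)}\circ L$ on the appropriate ball.

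The step I expect to be the main obstacle is not algebraic but bookkeeping: one must check that the analytic continuation $p_f$ of the periodic point used to define $\mathcal{\widehat{R}}_{2r}$ in Eq.~(\ref{'operR'}) agrees with the re-scaling factor $p$ implicit in the $\widetilde{N}_1$-fold composition defining $\mathcal{\widetilde{\widetilde{R}}}_{2r}$, i.e. that the re-scalings on the two sides of the square are literally the same affine map. This follows because both are determined by the same data — the cycle $\I^{\widetilde{N}_1}$ of $f=\phi\circ q_t$ and its repelling periodic point — and the $\widetilde{N}_1$-renormalization of the pair $(\phi,t)$ is, under $L$, exactly the $q^{N_1}$-times first-return renormalization of $f$; the identity $q_t = A_t\circ\tau$ and the fact that $L$ intertwines the zoom operators on diffeomorphisms with the affine re-scalings on unimodal maps make this precise. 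Once this matching of re-scalings is in hand, commutativity of the square is a direct computation from $L(\phi,t)=\phi\circ q_t$ and the explicit formulas (\ref{'formulanovoR'}) and (\ref{'operR'}), and the last sentence, $\mathcal{R}\circ L=L\circ\mathcal{\widetilde{R}}$ on $\widetilde{B}_{\delta/2}((\phi^\star,t^\star),\widetilde{\gamma})$, is read off as the outer rectangle of the now-commuting diagram.
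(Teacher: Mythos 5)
The paper gives no explicit proof of this proposition, treating it as an immediate consequence of the careful choice of constants in Section~6.2 and of Remark~\ref{'relacL'}, and your proposal supplies exactly that missing reasoning along the same lines: the triangles and inclusion arrows commute by definition, and the outer square reduces to iterating $L\circ\widetilde{\mathcal{R}}=\mathcal{R}\circ L$ once one checks that the rescalings on both sides are determined by the same cycle and periodic point. Your bookkeeping phrase ``$q^{N_1}$-fold first-return composition'' is slightly off — once $N=N_1\widetilde{N}_1$ is fixed, both $\widetilde{\widetilde{\mathcal{R}}}$ and $\widehat{\mathcal{R}}$ should be understood as built on the same $N$-th cycle, so the first-return time is $q^{N}$, not $q^{N_1}$ — but this does not affect the validity of the argument.
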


For the respective tangent spaces we obtain
\begin{proposition}
The following diagram commutes.
\vspace{1em}
$$\xymatrix@R=1.25cm{\mathcal{F}_{D_{\delta/2}}\times
\mathbb{C}
\ar[r]_(0.55){\D\mathcal{\widetilde{\widetilde{R}}}_{(\phi^*,t^*)}}
\ar[d]^{\D L_{(\phi^*,t^*)}} \ar@/_35pt/[dd]_{ \D L_{(\phi^*,t^*)}}
\ar@/^30pt/[rr]_{ \D\mathcal{\widetilde{R}}_{(\phi^*,t^*)}}&
\mathcal{F}_{D_{\delta}}\times \mathbb{C}\ar[r]_{j} &
\mathcal{F}_{D_{\delta/2}}\times \mathbb{C}
\ar[d]_{\D L_{(\phi^*,t^*)}} \ar@/^35pt/[dd]^{ \D L_{(\phi^*,t^*)}}\\
 \mathcal{T}_{D_{2\varepsilon}}
 \ar[d]^{k} & & \mathcal{T}_{D_{2\varepsilon}}
 \ar[d]_{k} \\
\mathcal{T}_{D_{\varepsilon/2}}
 \ar[r]^{\D\mathcal{\hat{R}}_{\phi^*\circ q_{t^*}}} \ar@/_30pt/[rr]^{\D\mathcal{R}_{\phi^*\circ q_{t^*}}} &
\mathcal{T}_{D_{\varepsilon}}
 \ar[r]^{i} & \mathcal{T}_{D_{\varepsilon/2}}
 }$$
\vspace{1em}
In particular we have that
$\D
\mathcal{R}_{L({\phi}^{*},t^{*})}\cdot\D L_{({\phi}^{*},t^{*})}=\D
L_{({\phi}^{*},t^{*})}\cdot\D
\mathcal{\widetilde{R}}_{({\phi}^{*},t^{*})}$ on $\mathcal{F}_{D_{\delta/2}}\times
\mathbb{C}.$
\end{proposition}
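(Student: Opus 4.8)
The plan is to obtain the tangent‑level diagram by differentiating, at the fixed point $({\phi}^{*},t^{*})$, the map‑level diagram of the previous proposition. On the ball $\widetilde{B}_{\delta/2}(({\phi}^{*},t^{*}),\widetilde{\gamma})$ we already have the identity $\mathcal{R}\circ L=L\circ\mathcal{\widetilde{R}}$ (equivalently $i\circ\mathcal{\widehat{R}}\circ k\circ L=k\circ L\circ j\circ\mathcal{\widetilde{\widetilde{R}}}$, cf. Remark~\ref{'relacL'}). Every map occurring here is complex analytic between the relevant Banach spaces: $\mathcal{\widetilde{\widetilde{R}}}_{2r}$ and $\mathcal{\widehat{R}}_{2r}$ are analytic by construction (Section~3 and Section~\ref{'seccomplexR'}), $L$ is analytic by definition of the composition transformation (it is affine in $\phi$ and analytic in $t$, as in the computation preceding Eq.~(\ref{'ecdis1'})), and the inclusions $i,j,k$ are bounded linear operators, hence analytic with derivative equal to themselves. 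Moreover $({\phi}^{*},t^{*})$ is a fixed point of $\mathcal{\widetilde{R}}_{2r}$ and ${\phi}^{*}\circ q_{t^*}=L({\phi}^{*},t^{*})$ is the corresponding fixed point of $\mathcal{R}_{2r}$ (Remark~\ref{'relacL'} together with the uniqueness statements), so when we apply the chain rule the base points of all the maps in the diagram line up with the respective fixed points.

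I would then simply apply the chain rule square by square. Differentiating $\mathcal{R}\circ L=L\circ\mathcal{\widetilde{R}}$ at $({\phi}^{*},t^{*})$ and using $\mathcal{\widetilde{R}}({\phi}^{*},t^{*})=({\phi}^{*},t^{*})$ gives $\D\mathcal{R}_{L({\phi}^{*},t^{*})}\circ\D L_{({\phi}^{*},t^{*})}=\D L_{({\phi}^{*},t^{*})}\circ\D\mathcal{\widetilde{R}}_{({\phi}^{*},t^{*})}$, which is exactly the displayed identity on $\mathcal{F}_{D_{\delta/2}}\times\mathbb{C}$. The explicit form of $\D L$ is the operator in Eq.~(\ref{'ecdis1'}); the only thing needed about it is that it really is the Fréchet derivative, which is the content of the computation before Eq.~(\ref{'ecdis1'}). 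The remaining commutations in the diagram are immediate from linearity: $k\circ\D L_{({\phi}^{*},t^{*})}=\D L_{({\phi}^{*},t^{*})}\circ j$ comes from $L=k\circ L$ together with the linearity of $j$ and $k$, the analogous identity holds on the right‑hand column, and $i\circ\D\mathcal{\widehat{R}}_{{\phi}^{*}\circ q_{t^*}}=\D\mathcal{R}_{{\phi}^{*}\circ q_{t^*}}$ is the definition $\mathcal{R}_{2r}=i\circ\mathcal{\widehat{R}}_{2r}$ differentiated; so the whole tangent diagram commutes and in particular the stated equality follows.

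The \emph{only} delicate point is the domain bookkeeping that makes the map‑level identity an equality of analytic maps on an \emph{open} neighborhood of the base point, so that it may legitimately be differentiated there. This is precisely what the successive choices $\varepsilon<\varepsilon_0$, $N=N_1\cdot\widetilde{N}_1$, and $\widetilde{\gamma}\le\min\{\widetilde{\gamma}_1,\widetilde{\gamma}_2,\widetilde{\gamma}_3\}$ in Section~\ref{'operRenovo'} were arranged to guarantee: on $\widetilde{B}_{\delta/2}(({\phi}^{*},t^{*}),\widetilde{\gamma})$ the image of $L$ lands inside $B_{2\varepsilon}({\phi}^{*}\circ q_{t^*},\gamma_1)$, the image of $\mathcal{\widetilde{\widetilde{R}}}$ lands in the ball on which $L$ (on $\mathcal{A}_{D_{\delta}}$) is defined, and $q_{t^*}(\overline{D}_{2\varepsilon})\subseteq D_{\delta/2}$, so all four compositions in the square are defined and analytic on a common open set containing the base point. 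Once this is taken for granted from the previous proposition, nothing further is required beyond the chain rule.
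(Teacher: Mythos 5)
Your argument is correct and matches what the paper implicitly does: the tangent-level diagram is obtained by differentiating the map-level commutative diagram of the preceding proposition at the fixed point $(\phi^*,t^*)$, using that all the constituent maps (including $L$, via the computation preceding Eq.~(\ref{'ecdis1'})) are analytic on a common neighborhood, that $j,k,i$ are linear so self-differentiating, and that the base points line up because $(\phi^*,t^*)$ and $L(\phi^*,t^*)$ are the respective fixed points. Your attention to the domain bookkeeping from Section~\ref{'operRenovo'} is exactly the right thing to flag, and nothing is missing.
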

Notice that, by
Remark~\ref{'relacL'}, $L({\phi}^{*},t^{*})$ is fixed point of
the operator $\mathcal{R}.$ An important relation between the
operators $\mathcal{R}$ and $\mathcal{\widetilde{R}}$ is the
following result.

\begin{proposition}\label{'spectro'}
Let $({\phi}^{*},t^{*})\in\mathcal{A}_{D_{\delta/2}}\times(0,1]$ be the
fixed point of the operator $\widetilde{\mathcal{R}}$ and
$L({\phi}^{*},t^{*})$ the corresponding fixed point of
$\mathcal{R}.$ Then
$$\sigma(\D\widetilde{\mathcal{R}}_{({\phi}^{*},t^{*})})=
\sigma(\D\mathcal{R}_{L({\phi}^{*},t^{*})}).$$
\end{proposition}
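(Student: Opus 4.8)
The plan is to deduce the spectral equality from the commutation relation $\D\mathcal{R}_{L(\phi^*,t^*)}\cdot \D L_{(\phi^*,t^*)} = \D L_{(\phi^*,t^*)}\cdot \D\widetilde{\mathcal{R}}_{(\phi^*,t^*)}$ established in the previous proposition, together with the properties of $\D L_{(\phi^*,t^*)}$ proved in Section~5: it is injective (Proposition~\ref{'propinj'}), has dense image (Lemma~\ref{'lemdenseL'}), and is compact (Lemma~\ref{'lemcompactness'}); moreover both renormalization operators are compact, so their differentials at the fixed points are compact operators whose nonzero spectrum consists of eigenvalues of finite multiplicity. Write $A = \D\widetilde{\mathcal{R}}_{(\phi^*,t^*)}$, $B = \D\mathcal{R}_{L(\phi^*,t^*)}$ and $T = \D L_{(\phi^*,t^*)}$, so that $BT = TA$, $T$ injective with dense range.

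First I would show $\sigma(A)\setminus\{0\} \subseteq \sigma(B)\setminus\{0\}$. If $\lambda\neq 0$ is an eigenvalue of $A$ with eigenvector $\omega\neq 0$, then $T\omega\neq 0$ by injectivity and $B(T\omega) = T(A\omega) = \lambda T\omega$, so $\lambda$ is an eigenvalue of $B$. Since $A$ is compact, every nonzero point of $\sigma(A)$ is such an eigenvalue, giving the inclusion. For the reverse inclusion $\sigma(B)\setminus\{0\}\subseteq\sigma(A)\setminus\{0\}$ one cannot just transpose eigenvectors, since $T$ is not surjective; instead I would argue with resolvents. Fix $\lambda\notin\sigma(A)$, $\lambda\neq 0$. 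From $BT = TA$ one gets, formally, $T(\lambda - A)^{-1} = (\lambda - B)^{-1}T$ on the range of $T$ — more precisely $(\lambda-B)T(\lambda-A)^{-1} = T$, so $(\lambda - B)$ is surjective onto the dense subspace $TX$ and, being of the form (scalar minus compact), it is Fredholm of index zero; hence surjectivity onto a dense set plus closed range forces $(\lambda-B)$ to be bijective, i.e. $\lambda\notin\sigma(B)$. This uses that $B$ is compact so that $\lambda - B$ has closed range and the Fredholm alternative applies. Combining the two inclusions yields $\sigma(A)\setminus\{0\} = \sigma(B)\setminus\{0\}$, and since both spaces are infinite-dimensional and both operators compact, $0\in\sigma(A)$ and $0\in\sigma(B)$, so the full spectra coincide.

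The main obstacle is the reverse inclusion, because $\D L_{(\phi^*,t^*)}$ fails to be surjective — its image is only dense — so a nonzero eigenvalue of $B = \D\mathcal{R}$ need not have an eigenvector that lies in the range of $T$, and one cannot pull it back to $A$ directly. The resolution is the Fredholm argument sketched above: one shows $(\lambda - B)$ restricted appropriately is injective (using that an eigenvalue of $B$ would, via the finite-dimensional generalized eigenspace and the intertwining $BT=TA$, force a corresponding eigenvalue of $A$ — here one must check that $T$ maps the generalized eigenspace of $A$ for $\lambda$ onto that of $B$, again using injectivity of $T$ and a dimension count made finite by compactness) and then that it is a Fredholm operator of index zero, so injectivity gives invertibility. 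An alternative, perhaps cleaner route is to note that $T$ has dense range and is injective, hence for any $\lambda\neq 0$ the operator $T$ carries $\ker(\lambda - A)^k$ isomorphically into $\ker(\lambda - B)^k$, and conversely a generalized eigenvector $v$ of $B$ satisfies $(\lambda-B)^k v = 0$; approximating $v$ by elements $T w_n$ and using that $(\lambda - B)^k$ has closed range on the $\lambda$-eigenspace (finite-dimensional, by compactness of $B$) one shows $v\in \overline{T(\ker(\lambda-A)^k)} = T(\ker(\lambda-A)^k)$ since the latter is finite-dimensional hence closed; this gives a bijection between generalized eigenspaces and in particular $\lambda\in\sigma(A)$. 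Either way the key inputs are: $T$ injective with dense range (Section~5), $A$ and $B$ compact (the operators $\widetilde{\mathcal{R}}$ and $\mathcal{R}$ are compact), and the intertwining $BT = TA$ (previous proposition).
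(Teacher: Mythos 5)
Your easy inclusion $\sigma(\D\widetilde{\mathcal{R}})\setminus\{0\}\subseteq\sigma(\D\mathcal{R})\setminus\{0\}$ is exactly the paper's, using the intertwining $BT=TA$ and injectivity of $T=\D L$. For the hard reverse inclusion, however, you take a genuinely different route. The paper constructs an explicit right-inverse $F$ of $\D L$: given $w\in\mathcal{T}_{D_{\varepsilon/2}}$, which has the form $w(x)=\psi(x^{2r})$, it builds a concrete pair $F(w)=(\omega,b)$ with $\D L_{f_*}F(w)=w$ by manipulating Taylor coefficients (the same recipe that proves Lemma~\ref{'lemdenseL'}), then uses the intertwining and injectivity to show $F(w)$ is a $\lambda$-eigenvector of $\D\widetilde{\mathcal{R}}$, and finally invokes the complex bounds to justify that $F(w)$ lands in the correct Banach space $\mathcal{F}_{D_{\delta/2}}\times\mathbb{C}$ rather than in a space of maps on a smaller domain. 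You instead argue abstractly: for $\lambda\neq 0$ outside $\sigma(A)$ the identity $(\lambda-B)T(\lambda-A)^{-1}=T$ shows that $\operatorname{Range}(\lambda-B)\supseteq\operatorname{Range}(T)$, which is dense by Lemma~\ref{'lemdenseL'}; since $\lambda-B$ is Fredholm of index zero (as $B$ is compact), its range is closed, hence all of $Y$, hence $\lambda-B$ is bijective. This is correct, and it is arguably cleaner because it entirely bypasses the explicit construction of $F$ and the complex-bounds bootstrap. What the paper's construction buys in exchange is the extra information of Proposition~\ref{'dimautospace'}: $F$ gives an explicit injection $V_\lambda\hookrightarrow\widetilde{V}_\lambda$ and hence equality of eigenspace dimensions, which a pure resolvent/Fredholm argument does not directly furnish. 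One caution on your ``alternative, cleaner route'': the claim that approximating a generalized eigenvector $v$ of $B$ by elements $Tw_n$ forces $v\in T\bigl(\ker(\lambda-A)^k\bigr)$ does not follow as stated, since density only puts $v$ in $\overline{T(X)}=Y$, not in the image of the finite-dimensional kernel; your primary Fredholm argument is the one that actually closes the gap.
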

\begin{proof} Denote $f_*=({\phi}^{*},t^{*}).$ Let $\lambda\neq 0$ be an
eigenvalue of $\D\widetilde{\mathcal{R}}_{f_*}$ with eigenvector
$v\neq 0,$ this is
$$\D\widetilde{\mathcal{R}}_{f_*}\,v=\lambda v.$$
By the relation of composition on the tangent spaces above we
have
$$\D \mathcal{R}_{L(f_*)}\cdot\D L_{f_*}=\D L_{f_*}\cdot\D \mathcal{\widetilde{R}}_{f_*}.$$
Then
\begin{eqnarray*}
\D \mathcal{R}_{L(f_*)}\cdot\D L_{f_*}\, (v)&=&\D L_{f_*}\cdot\D \mathcal{\widetilde{R}}_{f_*}\,(v)\\
\D \mathcal{R}_{L(f_*)}(\D L_{f_*\,}\,v)&=&\lambda(\D L_{f_*}\, v)
\end{eqnarray*}
so $\lambda$ is an eigenvalue of $\D\mathcal{R}_{L(f_*)}$ with $\D
L_{f_*}\, v\neq 0$ from Proposition~\ref{'propinj'}. Finally from
compactness of the operators $\D\widetilde{\mathcal{R}}_{f_*}$ and
$\D\mathcal{R}_{L(f_*)}$ it follows that
$\sigma(\D\widetilde{\mathcal{R}}_{f_*})\subset~\sigma(\D\mathcal{R}_{L(f_*)}).$

  Let $w \in\mathcal{T}_{D_{\varepsilon/2}}$.  Note that  $w(x)=\sum_{i}
a_i x^i,$ with $a_i=0$ if  $ 2r\nmid i$. Let $r_v\geq \epsilon/2$ be the convergence radius of this series.  Define the Taylor series  $\psi(x)=\sum_{i} a_{2ri}
x^i.$ The convergence radius for $\psi,$ denoted by $r_{\psi},$ is at least $(\varepsilon/2)^{2r}$ since
\begin{eqnarray*}
r_{\psi}&= & \frac{1}{\limsup_i \sqrt[i]{|a_{2ri}|}}\\ &\geq&
{\left(\frac{1}{\limsup_i \sqrt[2ri]{|a_{2ri}|}} \right) }^{2r}\\ &=& r_{w}^{2r}\\ & \geq& \big( {\varepsilon/2}\big)^{2r},
\end{eqnarray*}
so $\psi$ is well defined in a neighborhood of $0$. Note that $w(x)=\psi(x^{2r})$ for $x$ in a neighborhood of $0$. Indeed it is easy to see that $\psi$ is
defined in a $\theta(2r,\varepsilon)$-stadium of the interval
$[0,1]$ because $w$ is defined in a neighborhood of
the interval $[-1,1].$ Take $\mu(2r,\varepsilon,t)< \delta/2$ such that the
image of the $\mu(2r,\varepsilon,t)$-stadium
$D_{\mu(2r,\varepsilon,t)}$ by $A_{t^{*}}^{-1}$ is contained in
the $\theta(2r,\varepsilon)$-stadium of the interval $[0,1].$ Now
define
$$F:\mathcal{T}_{D_{\varepsilon/2}}\rightarrow
\mathcal{F}_{D_{\mu(2r,\varepsilon,t)}}\times \mathbb{C}$$ by
\begin{eqnarray}
F(w)= (\omega,b),\label{'aplicF'}
\end{eqnarray} where
$$\omega(y)=\psi\circ A_{t^{*}}^{-1}(y)-D\phi^{*}(y)
\frac{\psi\circ A_{t^{*}}^{-1}(1)}{2D\phi^{*}(1)}(1+y)$$ and
$$b=\frac{\psi\circ A_{t^{*}}^{-1}(1)t^{*}}{2D\phi^{*}(1)}.$$
Note that
$$ \mathcal{D}L_{f^\star} F(w)=w, $$
where this equality holds in a complex neighborhood of $[-1,1]$.
Let $\lambda\neq 0$ be an eigenvalue of $\D\mathcal{R}_{L(f_*)}$ with
eigenvector $w\neq 0,$ that is
$$\D\mathcal{R}_{L(f_*)}\,w=\lambda w.$$
 on $[-1,1]$. Then we have
$$\D L_{f_*}\cdot\D \widetilde{\mathcal{R}}_{f_*}\,F(w)=\D \mathcal{R}_{L(f_*)}\cdot\D L_{f_*}\,F(w)$$
on a neighborhood of $[-1,1]$. So
$$\D L_{f_*}\cdot\D \widetilde{\mathcal{R}}_{f_*}\,F(w)=\D \mathcal{R}_{L(f_*)}w= \lambda w = \lambda \mathcal{D}L_{f^\star} F(w)$$
on a neighborhood of $[-1,1]$. By the injectivity of $\D L_{f_*}$ it follows that
$$\D \widetilde{\mathcal{R}}_{f_*}\,F(w)= \lambda F(w)$$
in a neighborhood of $[-1,1]$. Note that  $F(w) \in \mathcal{F}_{D_{\eta}}\times \mathbb{C}$, for some $\eta$.   It remains to show that  $F(w)=(\omega,b)$ belongs to
$\mathcal{F}_{D_{\delta/2}}\times \mathbb{C}$. Indeed by the complex bounds there exists $N_0$ such that

$$\mathcal{D} {\tilde{\mathcal{R}}}^{N_0}_{f_\star} \colon \mathcal{F}_{D_{\eta}} \times \mathbb{C}\rightarrow \mathcal{F}_{D_{\delta/2}}\times \mathbb{C}$$
is well defined. In particular $\mathcal{D} {\tilde{\mathcal{R}}}^{N_0}_{f_\star}F(w)=\lambda^{N_0}F(w)$ belongs to $\mathcal{F}_{D_{\delta/2}}\times \mathbb{C}$, so $F(w)$ belongs to $\mathcal{F}_{D_{\delta/2}}\times \mathbb{C}$. We conclude that
$\sigma(\D\mathcal{R}_{L(f_\star)})\subset \sigma(\D\widetilde{\mathcal{R}}_{f_*})$.
\end{proof}

We denote by $V_{\beta}$ and $\widetilde{V}_{\lambda}$ the respective
eigenspaces of the eigenvalues  $\beta \in
\sigma(\D\mathcal{R}_{L({\phi}^{*},t^{*})})$  and $\lambda \in
\sigma(\D\widetilde{\mathcal{R}}_{({\phi}^{*},t^{*})}),$ this is
$$V_{\beta}=Ker(\D\mathcal{R}_{L({\phi}^{*},t^{*})}-\beta Id)=
\{v:(\D\mathcal{R}_{L({\phi}^{*},t^{*})}-\beta Id)v=0\}$$ e
$$\widetilde{V}_{\lambda}=Ker(\D\widetilde{\mathcal{R}}_{({\phi}^{*},t^{*})}-\lambda Id )=
\{(\omega,t):(\D\widetilde{\mathcal{R}}_{({\phi}^{*},t^{*})}-\lambda
Id )(\omega,t)=0\}.$$ Theses eigenspaces does not depending of the domains of definition of the maps since to apply the renormalization
operator ($\mathcal{R}$ or $\mathcal{\widetilde{R}}$) are holomorphically improving operators.

\begin{proposition}\label{'dimautospace'}
Let $({\phi}^{*},t^{*})\in\mathcal{A}_{D_{\delta/2}}\times(0,1]$ be the
fixed point of the operator $\widetilde{\mathcal{R}}. $ If $\lambda
\in \sigma(\D\widetilde{\mathcal{R}}_{({\phi}^{*},t^{*})})\setminus
\{0\}$ then $\dim V_{\lambda}=\dim \widetilde{V}_{\lambda}.$
\end{proposition}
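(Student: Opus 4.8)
The plan is to leverage the composition operator $L$ and its differential $\D L_{(\phi^*,t^*)}$ together with the already-established facts: $\D L_{f_*}$ is injective (Proposition~\ref{'propinj'}), has dense image (Lemma~\ref{'lemdenseL'}), is compact (Lemma~\ref{'lemcompactness'}), intertwines the two differentials via $\D\mathcal{R}_{L(f_*)}\cdot\D L_{f_*}=\D L_{f_*}\cdot\D\widetilde{\mathcal{R}}_{f_*}$, and that the map $F$ constructed in the proof of Proposition~\ref{'spectro'} is a right inverse for $\D L_{f_*}$ in the sense that $\D L_{f_*}F(w)=w$ near $[-1,1]$. The strategy is to produce mutually inverse linear isomorphisms between $\widetilde{V}_\lambda=\operatorname{Ker}(\D\widetilde{\mathcal{R}}_{f_*}-\lambda\,\mathrm{Id})$ and $V_\lambda=\operatorname{Ker}(\D\mathcal{R}_{L(f_*)}-\lambda\,\mathrm{Id})$. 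Since both operators are compact and $\lambda\neq 0$, both eigenspaces are finite-dimensional, so it suffices to exhibit two injective linear maps, one in each direction, or equivalently one linear isomorphism.

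First I would show $\D L_{f_*}$ maps $\widetilde{V}_\lambda$ into $V_\lambda$: if $(\omega,v)\in\widetilde{V}_\lambda$ then from the intertwining relation $\D\mathcal{R}_{L(f_*)}(\D L_{f_*}(\omega,v))=\D L_{f_*}(\D\widetilde{\mathcal{R}}_{f_*}(\omega,v))=\lambda\,\D L_{f_*}(\omega,v)$, so $\D L_{f_*}(\omega,v)\in V_\lambda$; and it is injective on $\widetilde{V}_\lambda$ because $\D L_{f_*}$ is globally injective. Hence $\dim\widetilde{V}_\lambda\leq\dim V_\lambda$. For the reverse inequality I would go the other way using $F$: given $w\in V_\lambda$, set $(\omega,v)=F(w)$; as in the proof of Proposition~\ref{'spectro'}, $\D L_{f_*}(\D\widetilde{\mathcal{R}}_{f_*}F(w))=\D\mathcal{R}_{L(f_*)}\D L_{f_*}F(w)=\D\mathcal{R}_{L(f_*)}w=\lambda w=\lambda\,\D L_{f_*}F(w)$, and injectivity of $\D L_{f_*}$ gives $\D\widetilde{\mathcal{R}}_{f_*}F(w)=\lambda F(w)$, i.e. $F(w)\in\widetilde{V}_\lambda$ — here, exactly as in Proposition~\ref{'spectro'}, one must invoke the complex bounds (Theorem~\ref{'restate1'}) to promote $F(w)$, a priori only in $\mathcal{F}_{D_\eta}\times\mathbb{C}$ for some small $\eta$, into $\mathcal{F}_{D_{\delta/2}}\times\mathbb{C}$ by applying $\D\widetilde{\mathcal{R}}^{N_0}_{f_*}$ and using $\D\widetilde{\mathcal{R}}^{N_0}_{f_*}F(w)=\lambda^{N_0}F(w)$. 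Moreover $F$ restricted to $V_\lambda$ is injective: if $F(w)=0$ then $w=\D L_{f_*}F(w)=0$. Thus $\dim V_\lambda\leq\dim\widetilde{V}_\lambda$, and combining the two inequalities yields equality.

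Alternatively, and perhaps more cleanly, I would observe that $\D L_{f_*}|_{\widetilde{V}_\lambda}\colon\widetilde{V}_\lambda\to V_\lambda$ and $F|_{V_\lambda}\colon V_\lambda\to\widetilde{V}_\lambda$ are inverse to each other: $\D L_{f_*}F(w)=w$ on $V_\lambda$ by construction of $F$, and $F(\D L_{f_*}(\omega,v))=(\omega,v)$ for $(\omega,v)\in\widetilde{V}_\lambda$ follows because $\D L_{f_*}$ is injective and $\D L_{f_*}F(\D L_{f_*}(\omega,v))=\D L_{f_*}(\omega,v)$. This realizes an explicit linear isomorphism $\widetilde{V}_\lambda\cong V_\lambda$, which immediately gives $\dim\widetilde{V}_\lambda=\dim V_\lambda$.

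The main obstacle is the domain bookkeeping: $F(w)$ is initially only defined and holomorphic on a small stadium $D_\eta$ whose size depends on the convergence radius of the Taylor series of $w$, whereas membership in $V_\lambda$ and $\widetilde{V}_\lambda$ is stated on the fixed spaces $\mathcal{T}_{D_{\delta/2}}$ and $\mathcal{F}_{D_{\delta/2}}\times\mathbb{C}$. One must check, exactly as in Proposition~\ref{'spectro'}, that applying a high iterate of $\D\widetilde{\mathcal{R}}_{f_*}$ — which is a holomorphically improving (regularizing) operator by the complex bounds — restores the required domain, and that the eigenspace condition is genuinely domain-independent (as noted in the paragraph preceding the statement, the renormalization operators are holomorphically improving, so the eigenspaces do not depend on the precise domains). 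Once this point is handled, the rest is the short linear-algebra argument above. The finite-dimensionality of the eigenspaces, needed to even speak of $\dim V_\lambda$ and $\dim\widetilde{V}_\lambda$, follows from compactness of $\D\mathcal{R}_{L(f_*)}$ and $\D\widetilde{\mathcal{R}}_{f_*}$ together with $\lambda\neq 0$ (Riesz theory).
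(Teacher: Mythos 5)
Your proposal is correct and follows essentially the same route as the paper: construct the injective linear map $\D L_{f_*}\colon \widetilde{V}_\lambda\to V_\lambda$ (injective since $\D L_{f_*}$ is globally injective by Proposition~\ref{'propinj'}), and the injective map $F\colon V_\lambda\to\widetilde{V}_\lambda$ built in the proof of Proposition~\ref{'spectro'}, yielding the two dimension inequalities whose conjunction gives equality. You supply more of the justification (the intertwining computation, the domain upgrade via the complex bounds, and the observation that the two maps are in fact mutually inverse on the eigenspaces) than the paper's terse proof makes explicit, but the underlying argument is identical.
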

\begin{proof} We have that $V_{\lambda}$ and
$\widetilde{V}_{\lambda}$ are finite dimensional subspaces. Define the
continuous map
$$T: \widetilde{V}_{\lambda}\rightarrow V_{\lambda}$$ by
$T(v)=\D L_{({\phi}^{*},t^{*})}(v).$ Since $\D
L_{({\phi}^{*},t^{*})}$ is injective we have that $\dim Ker(T)=0.$
Then $\dim \widetilde{V}_{\lambda}= \dim Ker(T)+\dim Im(T)\leq \dim
V_{\lambda}.$ Also we can define the continuous injective map
$$\widetilde{T}:V_{\lambda} \rightarrow \widetilde{V}_{\lambda}$$
by $\widetilde{T}(\widetilde{w})=F(\widetilde{w}),$ where $F$ is
defined by the expression(~\ref{'aplicF'}). Then $\dim V_{\lambda}\leq
\dim \widetilde{V}_{\lambda}.$
\end{proof}

\section{Proof of the main results}
The results obtained in the Subsection~\ref{'operRenovo'} (the
Propositions~\ref{'spectro'} and~\ref{'dimautospace'})
and the Theorem~\ref{'hyperbolicity'} \cite{E-W-P} were crucial to
establish a result of hyperbolicity for the new renormalization
operator $\mathcal{\widetilde{R}}_{2r}$ which is analog to the
Theorem~\ref{'hyperbolicity'} \cite{E-W-P}, for the usual
renormalization $\mathcal{R}_{2r}.$

\begin{proposition}\label{'newopersuave'}
The transformation
$(\phi,t,\alpha)\rightarrow\mathcal{\widetilde{R}}_{\alpha}(\phi,t)$
is complex analytic in the variables $(\phi,t)$ and real analytic in
the variable $(\phi,t,\alpha).$
\end{proposition}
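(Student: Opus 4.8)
The plan is to check that each elementary piece in the defining expression for $\mathcal{\widetilde{R}}_{\alpha}=j\circ\mathcal{\widetilde{\widetilde{R}}}_{\alpha}$ depends holomorphically on $(\phi,t)$ and, jointly with $\alpha$, holomorphically on a complex neighbourhood of the real value of $\alpha$; analyticity of $\mathcal{\widetilde{R}}_{\alpha}$ then follows since finite compositions of such families, ratios of interval lengths, and the bounded linear inclusion $j$ all preserve analyticity.

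First I would show that the cycles vary analytically with the parameters. Fix a base point $(\phi_{0},t_{0},\alpha_{0})$ at which $(\phi_{0},t_{0})$ is $\widetilde{N}_{1}$-times renormalizable with the fixed combinatorics $\sigma$. The repelling periodic point $p\in(-1,1)$ of period $q$ entering the first return construction, together with its whole orbit under $f=\phi\circ q_{t}$, avoids the critical point $0$ (otherwise $Df^{q}(p)=0$ and $p$ could not be repelling), and near each orbit point $f$ coincides with an analytic map: on the sectors $S_{\alpha}^{\pm}$ one uses the univalent extensions $q_{t}^{\pm}(z)=-2te^{\alpha\log(\pm z)}+2t-1$, exactly as in the proof of Theorem~\ref{'domaindef'}, and these are jointly holomorphic in $(z,t,\alpha)$ and entire in $\alpha$. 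Since $|Df^{q}(p)|\neq 1$, the holomorphic implicit function theorem in Banach spaces (applied to $(\phi,t,\alpha,p)\mapsto f^{q}(p)-p$, which is holomorphic in $(\phi,t)$ and in $\alpha$) yields a continuation $p=p(\phi,t,\alpha)$ with the same regularity. Every endpoint of every interval $I_{i}^{n}$ of the cycles $\I^{n}$, $n\le\widetilde{N}_{1}$, is obtained from $p$, from $0$, and from finitely many iterations of $f$, hence depends holomorphically on $(\phi,t)$ and on $\alpha$ as well; the periods $q_{n}$ and the orientations $o(I_{i}^{n})$ are locally constant because renormalizability of type $\sigma$ is an open condition.

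Next, the affine maps $A_{J}\colon J\to[-1,1]$ depend holomorphically on the endpoints of $J$, and the composition operation $(\psi,g)\mapsto\psi\circ g$ is holomorphic between spaces of holomorphic maps with suitable domains (this is the same mechanism used in Section~5, e.g.\ in Lemma~\ref{'lemcompactness'}). Hence each $\phi_{0}^{n}=Z_{\phi^{-1}(I_{1}^{n})}(\phi)$, each $\phi_{i}^{n}=Z_{q_{t}(I_{i}^{n})}(\phi)$, and each $q_{i}^{n}=Z_{I_{i}^{n}}(q_{t})$ (the last realized through $q_{t}^{\pm}$ on a sector, as above) is a holomorphic family with values in $\mathcal{A}_{D_{\varepsilon}}$ on a stadium $D_{\varepsilon}$ which, by Theorem~\ref{'domaindef'} and Remark~\ref{'boundperturb'}, can be taken independent of the parameters in a neighbourhood of $(\phi_{0},t_{0},\alpha_{0})$. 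By the complex bounds (Theorem~\ref{'restate1'}, through the construction of $\mathcal{\widetilde{\widetilde{R}}}_{\alpha}$) the finite composition $(\phi_{q_{n}-1}^{n}\circ q_{q_{n}-1}^{n})\circ\cdots\circ\phi_{0}^{n}$ is well defined and univalent in a fixed domain $D_{2\delta}$ for all parameters near the base point; being a finite composition of the holomorphic families just described, it is itself holomorphic in $(\phi,t)$ and in $\alpha$ as a map into $\mathcal{A}_{D_{2\delta}}$. The second coordinate $t_{\widetilde{N}_{1}}=|q_{t}(I_{0}^{\widetilde{N}_{1}})|/|\phi^{-1}(I_{1}^{\widetilde{N}_{1}})|$ is a ratio of lengths of intervals with holomorphically varying endpoints, hence holomorphic. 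Composing with the bounded linear inclusion $j\colon\mathcal{A}_{D_{2\delta}}\times\C\to\mathcal{A}_{D_{\delta/2}}\times\C$ shows that $(\phi,t)\mapsto\mathcal{\widetilde{R}}_{\alpha}(\phi,t)$ is complex analytic; and since $\alpha$ enters only through the entire-in-$\alpha$ factors $e^{\alpha\log(\pm z)}$, the whole family extends holomorphically to a complex neighbourhood of the real value of $\alpha$, which is precisely real analyticity in $(\phi,t,\alpha)$.

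The main obstacle is not any individual analyticity check — each is routine — but the bookkeeping needed to ensure that near $(\phi_{0},t_{0},\alpha_{0})$ the operator $\mathcal{\widetilde{R}}_{\alpha}$ is described by a \emph{single} analytic formula: that the combinatorial data (the periods $q_{n}$, the orientations, the identification of which orbit points serve as the endpoints of the $I_{i}^{n}$) does not jump, and that all the zoom operators can be realized on a common, parameter-independent domain so that their composition makes sense between fixed Banach spaces. This is exactly what the real bounds (Lemma~\ref{'aprioribound'} with Remark~\ref{'boundperturb'}) and Theorem~\ref{'domaindef'} are there to provide.
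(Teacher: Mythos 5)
The paper states Proposition~\ref{'newopersuave'} without proof, using it only as an input to the implicit-function-theorem argument in the proof of Theorem~\ref{'fixedhyp'}, so there is no argument in the text to compare yours against; you have filled a gap. Your argument is correct and is the natural one. The chain you set up is exactly what is needed: the orbit of the repelling point $p$ avoids the critical point (else $Df^{q}(p)=0$), so near each orbit point $f$ is given by one of the sectorial extensions $q_t^{\pm}(z)=-2te^{\alpha\log(\pm z)}+2t-1$, which are jointly holomorphic in $(z,\phi,t)$ and entire in $\alpha$; since $|Df^q(p)|>1$ gives $Df^q(p)\neq1$, the holomorphic implicit function theorem continues $p$ and hence the finitely many (forward and inverse-branch) orbit points that form the endpoints of the cycle intervals; the affine rescalings and compositions between Banach spaces of holomorphic maps preserve analyticity; and Remark~\ref{'boundperturb'}, Theorem~\ref{'domaindef'} and Theorem~\ref{'restate1'} supply the parameter-independent stadiums on which the whole finite composition defining $\mathcal{\widetilde{\widetilde{R}}}_{\alpha}$ lives, so that the map lands in a fixed Banach space $\mathcal{A}_{D_{2\delta}}\times\C$ and composition with the bounded inclusion $j$ finishes the argument. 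Your closing remark correctly identifies the only real content here: the combinatorial stability (openness of renormalizability with combinatorics $\sigma$) and the uniform domains coming from the real and complex bounds are what let one write $\mathcal{\widetilde{R}}_{\alpha}$ as a single formula between fixed Banach spaces; the rest is routine. A minor refinement you could add is to be explicit that some interval endpoints are obtained by local inverse branches of $f^{k}$ (not just forward iterates of $p$ and $0$), but these are again handled by the implicit function theorem at noncritical points, so the conclusion is unaffected.
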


\begin{theorem}
Let $\alpha=2r\in 2\mathbb{N}$ be the critical exponent. There exists a positive
number $N$ such that the operator $\mathcal{\widetilde{R}}_{2r},$ as
defined above, has a unique unimodal fixed point
$(\phi^*_{2r},t^*_{2r}).$ Furthermore $(\phi^*_{2r},t^*_{2r})$ is
hyperbolic with codimension one stable manifold.
\end{theorem}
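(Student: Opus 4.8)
The plan is to deduce both statements from the corresponding facts for the classical renormalization operator $\mathcal{R}_{2r}$ (Theorem~\ref{'hyperbolicity'}, quoted from~\cite{E-W-P}) by transporting them through the composition transformation $L$. The key is Remark~\ref{'relacL'}: on the relevant neighborhood $L\circ\widetilde{\mathcal{R}}_{2r}=\mathcal{R}_{2r}\circ L$, so $L$ is a semiconjugacy between the two operators, and the commuting diagrams of Subsection~\ref{'operRenovo'} make this precise both on the spaces $\mathcal{A}_{D_{\delta/2}}\times\mathbb{C}$, $\mathcal{U}_{D_{\varepsilon/2}}$ and on the tangent spaces.

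First I would settle existence and uniqueness of the fixed point. By Remark~\ref{remext}, when $\alpha=2r$ Sullivan's methods yield a $C^3$ fixed point $(\phi^*_{2r},t^*_{2r})$ of $\widetilde{\mathcal{R}}_{2r}$ whose diffeomorphic part is analytic and univalent near $[-1,1]$, and by the complex bounds (Theorem~\ref{'restate1'} and its corollaries) it lies in $\mathcal{H}_{2r}(C_0,\delta_0,\infty)$, so in particular $t^*_{2r}\in(0,1]$. Applying $L$ and Remark~\ref{'relacL'}, $L(\phi^*_{2r},t^*_{2r})=\phi^*_{2r}\circ q_{t^*_{2r}}$ is a fixed point of $\mathcal{R}_{2r}$, hence equals the unique fixed point $f^*$ of Theorem~\ref{'hyperbolicity'}. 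For uniqueness: if $(\phi_1,t_1)$ and $(\phi_2,t_2)$ are unimodal fixed points of $\widetilde{\mathcal{R}}_{2r}$ then $L(\phi_1,t_1)$ and $L(\phi_2,t_2)$ are fixed points of $\mathcal{R}_{2r}$, hence equal, so Proposition~\ref{'unipointren'} forces $(\phi_1,t_1)=(\phi_2,t_2)$ --- here one uses $t_i\neq 0$, which holds since $t_i=0$ would give $q_{t_i}\equiv-1$, which is not renormalizable.

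Next I would treat hyperbolicity and the stable manifold; write $f_*=(\phi^*_{2r},t^*_{2r})$. By Proposition~\ref{'spectro'}, $\sigma(\D\widetilde{\mathcal{R}}_{f_*})=\sigma(\D\mathcal{R}_{L(f_*)})$, and since the right-hand spectrum avoids the unit circle by Theorem~\ref{'hyperbolicity'}, so does the left; moreover $\widetilde{\mathcal{R}}_{2r}=j\circ\widetilde{\widetilde{\mathcal{R}}}_{2r}$ with $j$ compact, so $\D\widetilde{\mathcal{R}}_{f_*}$ is compact and its nonzero spectrum is a finite set of eigenvalues plus possibly a sequence accumulating at $0$. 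Hence $f_*$ is a hyperbolic fixed point. To pin down the codimension, I would sum over the finitely many $\lambda$ with $|\lambda|>1$: by Proposition~\ref{'dimautospace'}, $\dim\widetilde{V}_\lambda=\dim V_\lambda$ for each such $\lambda$, and since the unstable eigenvalue of $\mathcal{R}_{2r}$ is simple (so its generalized eigenspace coincides with $V_\lambda$), the total unstable dimension of $\D\widetilde{\mathcal{R}}_{f_*}$ equals that of $\D\mathcal{R}_{L(f_*)}$, which is $1$. The existence of the codimension one local stable manifold then follows from the standard stable manifold theorem for a hyperbolic fixed point of a $C^1$ (indeed analytic, by Proposition~\ref{'newopersuave'}) compact operator on a Banach space; the global stable set $W^s(f_*)$ is obtained by pulling back along $\widetilde{\mathcal{R}}_{2r}$.

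The hard part will not be the spectral bookkeeping, which is essentially packaged in Propositions~\ref{'spectro'} and~\ref{'dimautospace'}, but arranging the local correspondence cleanly: one must choose the radii $\widetilde\gamma,\gamma_1$, the domains $D_{\delta/2},D_{\varepsilon/2}$, and the iterate $N=N_1\widetilde{N}_1$ so that $L$ carries a whole neighborhood of $f_*$ into the ball $B_{\varepsilon/2}(f^*,\gamma_1)$ on which Theorem~\ref{'hyperbolicity'} is valid, and so that the local right inverse $F$ of $\D L$ constructed in the proof of Proposition~\ref{'spectro'} has image in the right domain. The complex bounds of Theorem~\ref{'restate1'} are exactly the tool that absorbs the loss of domain in these compositions; with those compatibilities in hand, transporting hyperbolicity and the stable manifold through $L$ is routine.
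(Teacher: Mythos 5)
Your proposal is correct and follows essentially the same route as the paper: obtain the fixed point (Martens/Sullivan via Remark~\ref{remext}), push it through the semiconjugacy $L$ of Remark~\ref{'relacL'} to identify it with the hyperbolic fixed point of $\mathcal{R}_{2r}$ from Theorem~\ref{'hyperbolicity'}, pull back hyperbolicity and the codimension via Propositions~\ref{'spectro'} and~\ref{'dimautospace'}, and deduce uniqueness from the injectivity of $L$ (Proposition~\ref{'unipointren'}). If anything, you are slightly more careful than the paper on one point: Proposition~\ref{'dimautospace'} only equates the geometric eigenspaces $V_\lambda$ and $\widetilde{V}_\lambda$, whereas the codimension count for the stable manifold concerns generalized eigenspaces; your remark that the unstable eigenvalue of $\mathcal{R}_{2r}$ is simple (algebraic multiplicity one, forced by the codimension-one statement in Theorem~\ref{'hyperbolicity'}) is exactly what is needed to close that gap, and is left implicit in the paper.
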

\begin{proof}
By Martens~\cite{MM}, there exists a fixed point $(\phi^*,t^*)$ to the operator
$\mathcal{\widetilde{R}}_{2r}.$
Since
$$L\circ\mathcal{\widetilde{R}}_{2r}(\phi^*,t^*)=\mathcal{R}_{2r}\circ
L(\phi^*,t^*),$$ see Remark~\ref{'relacL'}, then
$\phi^*\circ q_{t^*}$ is a fixed point to the operator $\mathcal{R}_{2r}.$ On the other hand Sullivan~\cite{Su} and
Theorem~\ref{'hyperbolicity'}\cite{E-W-P} imply that  the operator
$\mathcal{R}_{2r}$ has a hyperbolic fixed point $f^*=\phi^*\circ
q_{t^*} \in B(f^*,\gamma_1)$ with codimesion one stable manifold.
By Proposition~\ref{'spectro'} and Proposition~\ref{'dimautospace'} we obtain that the fixed point $(\phi^*,t^*)$ of the operator $\mathcal{\widetilde{R}}_{2r}$
is hyperbolic with codimension one stable maniflod. The uniqueness of the fixed point follows
from Proposition~\ref{'unipointren'}.
\end{proof}

\begin{bew}{of Theorem~\ref{'fixedhyp'}}
Define the operator $$F((\phi,t),\alpha)= \widetilde{\mathcal{R}}_{\alpha}(\phi,t) - (\phi,t).  $$
From Proposition~\ref{'newopersuave'}  the
operator $F$ is complex analytic in the variables $(\phi,t)$ and
real analytic in the variables $(\phi,t,\alpha).$ We have
$$\mathcal{D}\widetilde{\mathcal{R}}_{2r}(\phi_{2r}^*,t_{2r}^*)-Id$$ is
invertible because $(\phi_{2r}^*,t_{2r}^*)$ is the hyperbolic fixed
point of $\widetilde{\mathcal{R}}_{2r}.$ So we can conclude the proof applying the  Implicit
Function Theorem.
\end{bew}

\begin{bew}{of  Theorem~\ref{uniqueness}} Since $(\phi_{2r}^*,t_{2r}^*)$ is a hyperbolic fixed point,
there exists a neighborhood $V_1=B((\phi_{2r}^*,t_{2r}^*),\eta)$ of
$(\phi_{2r}^*,t_{2r}^*)$ such that for $\alpha\sim 2r,$ there exists
an unique fixed point of $\mathcal{\widetilde{R}}_{\alpha}$ in
$V_1.$ Therefore it only remains to verify that, for $\alpha\sim 2r,$ all
hyperbolic fixed points $(\phi_{\alpha}^*,t_{\alpha}^*)$ of
$\mathcal{\widetilde{R}}_{\alpha}$ belong to $V_1$. In fact, we
suppose that there exists a sequence $\alpha_s\rightarrow 2r,$ where
$s\rightarrow \infty,$ and fixed points
$(\phi_{{\alpha}_s}^*,t_{{\alpha}_s}^*)$ of
$\mathcal{\widetilde{R}}_{{\alpha}_s}$ such that either
\begin{eqnarray}
|\phi_{\alpha_s}^*-\phi_{2r}^*|_{D_{\delta}}\geq \eta \mbox{ or }
|t_{\alpha_s}^*-t^*_{2r}|\geq\eta.\label{'negunit'}
\end{eqnarray}
From Corollary~\ref{'restatecomplex'}
$$(\phi_{\alpha_s}^*,t_{\alpha_s}^*)\in
\mathcal{H}_{\alpha_s}(C_0,\delta_0,\infty).$$ By definition of
$\widetilde{\widetilde{R}}_{\alpha_s}$ we have that
$$(\phi_{\alpha_{s}}^*,t_{\alpha_{s}}^*)=\mathcal{\widetilde{\widetilde{R}}}_{\alpha_{s}}(\phi_{\alpha_{s}}^*,t_{\alpha_{s}}^*)$$
belongs to $\mathcal{H}_{\alpha_s}(C_0,2\delta_0,\infty).$ Then we have
$(\phi_{\alpha_s}^*,t_{\alpha_s}^*)$ is a pre-compact family on
$\mathcal{A}_{D_{\delta_0}}\times \mathbb{C},$ in particular there
is a subsequence $(\phi_{\alpha_{s_i}}^*,t_{\alpha_{s_i}}^*)$
converging to $(\phi,t)\in \mathcal{H}_{2r}(C_0,\delta_0,\infty).$
Since
\begin{eqnarray*}
\mathcal{\widetilde{R}}_{\alpha_{s_i}}(\phi_{\alpha_{s_i}}^*,t_{\alpha_{s_i}}^*)=(\phi_{\alpha_{s_i}}^*,t_{\alpha_{s_i}}^*)
\end{eqnarray*} taking $s_i\rightarrow 2r$, we conclude that
$(\phi,t)\in \mathcal{H}_{2r}(C_0,\delta_0,\infty)$ is fixed point
of the operator $\mathcal{\widetilde{R}}_{2r}.$ Then by the
uniqueness of the fixed point to $\alpha=2r$ we have
$$(\phi,t)=(\phi_{2r}^*,t_{2r}^*).$$
This leads to a contradiction with~(\ref{'negunit'}).
\end{bew}

\begin{theorem}[Convergence]\label{'convergencia'}
If $(\phi,t)\in B_{\delta}((\phi^*, t^*),\widetilde{\gamma})$ is an
unimodal pair, infinitely renormalizable with combinatorics
$\sigma,$ then we have $\mathcal{\widetilde{R}}^i_{2r}(\phi,t)\in
B_{\delta}((\phi^*, t^*),\widetilde{\gamma}),$ for all $i$ large
enough and
$$\mathcal{\widetilde{R}}_{2r}^i(\phi, t)\rightarrow_i
(\phi^*, t^*)$$ in $\mathcal{A}_{D_{\delta}}\times\mathbb{C}.$
\end{theorem}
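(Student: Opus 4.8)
The plan is to transfer the known convergence of the usual renormalization operator $\mathcal{R}_{2r}$ to the new operator $\widetilde{\mathcal{R}}_{2r}$ through the composition operator $L$, the extra ingredient that makes the transfer work being the complex \emph{a priori} bounds. Put $g:=L(\phi,t)=\phi\circ q_t$. Since $\phi$ is analytic and univalent on a complex neighbourhood of $[-1,1]$, the map $g$ is a genuine analytic unimodal map with critical exponent $2r$, infinitely renormalizable with the periodic combinatorics $\sigma$; and because $(\phi,t)$ lies in $B_\delta((\phi^*,t^*),\widetilde\gamma)$ and $L$ is continuous, $g$ is close to $f^*:=\phi^*\circ q_{t^*}$. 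Note that $f^*=L(\phi^*,t^*)$: by Remark~\ref{'relacL'} and the fact that $(\phi^*,t^*)$ is fixed by $\widetilde{\mathcal{R}}_{2r}$, the pair $L(\phi^*,t^*)$ is a fixed point of $\mathcal{R}_{2r}$ near $f^*$, hence equals it by the uniqueness in Theorem~\ref{'hyperbolicity'}.

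Next I would iterate the intertwining relation. Remark~\ref{'relacL'} gives $L\circ\widetilde{\mathcal{R}}_{2r}=\mathcal{R}_{2r}\circ L$, and the commuting diagrams of Section~\ref{'operRenovo'} — together with the complex bounds, which guarantee that the complex domains produced at each renormalization step do not degenerate — show that all the compositions involved are defined on a fixed complex neighbourhood of $[-1,1]$; by induction one obtains
$$L\bigl(\widetilde{\mathcal{R}}_{2r}^{\,i}(\phi,t)\bigr)=\mathcal{R}_{2r}^{\,i}(g)$$
for every sufficiently large $i$. By the renormalization theory for even critical order — Sullivan's complex bounds and the hyperbolicity of $\mathcal{R}_{2r}$ at $f^*$ with codimension one stable manifold (Theorem~\ref{'hyperbolicity'}, \cite{Su,E-W-P}), whose stable manifold contains every analytic unimodal map infinitely renormalizable with combinatorics $\sigma$ — we have $\mathcal{R}_{2r}^{\,i}(g)\to f^*=L(\phi^*,t^*)$ in $\mathcal{U}_{D_{\varepsilon/2}}$ as $i\to\infty$.

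It remains to push this convergence back through $L$, and this is where the argument is genuinely delicate: $L$ is injective (Proposition~\ref{'unipointren'}) but not a homeomorphism onto its image, so convergence of $L(\widetilde{\mathcal{R}}_{2r}^{\,i}(\phi,t))$ does not \emph{a priori} give convergence of $\widetilde{\mathcal{R}}_{2r}^{\,i}(\phi,t)$. Here Theorem~\ref{'restate1'} saves the day: for $i$ large the pairs $\widetilde{\mathcal{R}}_{2r}^{\,i}(\phi,t)$ belong to $\mathcal{H}_{2r}(C_0,\delta_0,\infty)$, so their diffeomorphic parts are univalent on a fixed stadium $D_{\delta_0}\supset D_\delta$ and fix $\pm1$; by Montel's theorem (Corollary~\ref{'Rcomplexconseq'}) they form a pre-compact family in $\mathcal{A}_{D_\delta}$, and the parameters lie in $[0,1]$, so $\{\widetilde{\mathcal{R}}_{2r}^{\,i}(\phi,t)\}_i$ is pre-compact in $\mathcal{A}_{D_\delta}\times[0,1]$. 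If $(\psi,s)$ is any subsequential limit, $\widetilde{\mathcal{R}}_{2r}^{\,i_k}(\phi,t)\to(\psi,s)$, then by continuity of $L$ and the previous paragraph $L(\psi,s)=\lim_k\mathcal{R}_{2r}^{\,i_k}(g)=L(\phi^*,t^*)$, so $(\psi,s)=(\phi^*,t^*)$ by injectivity of $L$. Since a pre-compact sequence all of whose subsequential limits coincide converges, we conclude $\widetilde{\mathcal{R}}_{2r}^{\,i}(\phi,t)\to(\phi^*,t^*)$ in $\mathcal{A}_{D_\delta}\times\mathbb{C}$; in particular $\widetilde{\mathcal{R}}_{2r}^{\,i}(\phi,t)\in B_\delta((\phi^*,t^*),\widetilde\gamma)$ for all large $i$. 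The main obstacle, then, is not any one estimate but the structural point that injectivity of $L$ must be combined with the compactness supplied by the complex bounds; checking that the iterated identity $\mathcal{R}_{2r}^{\,i}\circ L=L\circ\widetilde{\mathcal{R}}_{2r}^{\,i}$ holds on a neighbourhood of $[-1,1]$ independent of $i$ is the accompanying technical nuisance.
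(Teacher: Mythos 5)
Your proposal is correct and follows essentially the same route as the paper's proof: both use the complex bounds of Theorem~\ref{'restate1'} to get pre-compactness of the orbit $\{\widetilde{\mathcal{R}}_{2r}^{i}(\phi,t)\}_i$, push each iterate through the intertwining relation $L\circ\widetilde{\mathcal{R}}_{2r}=\mathcal{R}_{2r}\circ L$, invoke Sullivan--McMullen convergence of $\mathcal{R}_{2r}^i(\phi\circ q_t)$ to $\phi^*\circ q_{t^*}$, and identify the subsequential limits via the injectivity of $L$ (Proposition~\ref{'unipointren'}). The paper phrases this as a proof by contradiction while you phrase it as a direct ``all subsequential limits of a pre-compact sequence coincide'' argument, but these are the same reasoning.
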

\begin{proof} Observe that by the complex bounds, we have
$$\mathcal{\widetilde{R}}^i_{2r}(\phi,t) \in
\mathcal{H}_{2r}(C_0.\delta_0,\infty),$$ for all $i$ large enough.
Suppose that the statement of the theorem is false. So there exists
$\eta>0$ and a pair $(\phi,t)$ infinitely renormalizable with
critical exponent $2r$ such that
  \begin{eqnarray}
   |\mathcal{\widetilde{R}}_{2r}^{i_j}(\phi,t)-(\phi^*,t^*)|_{\mathcal{A}_
   {D_{\delta}}\times \mathbb{C}}>\eta,\label{'contrachyp'}
  \end{eqnarray}
where $i_j\rightarrow_j \infty$. We have the first component of each par in the family
$\{\mathcal{\widetilde{\widetilde{R}}}_{2r}\circ\mathcal{\widetilde{R}}_{2r}^{i_j-1}(\phi,t)\}_{j}$
is defined and univalent in $\overline{D}_{2\delta}.$ Then this familyis a pre-compact in $\mathcal{A}_{D_{\delta}}\times \mathbb{C},$
in particular there exists a convergence subsequence
$$\mathcal{\widetilde{R}}_{2r}^{i_j}(\phi, t)\rightarrow (\widetilde{\phi},
\widetilde{t}).$$
The composition operator L satisfies
$$L\circ\widetilde{R}_{2r}= \mathcal{R}_{2r}\circ L,$$ so it follows that
$$\mathcal{R}_{2r}^{i_j}(\phi\circ q_{t})=\mathcal{R}_{2r}^{i_j}\circ
L(\phi,t)\rightarrow_j L(\widetilde{\phi},
\widetilde{t})=\widetilde{\phi}\circ q_{\widetilde{t}}. $$ By
Sullivan~\cite{Su} ( also see~\cite{McM96}) the operator
$\mathcal{R}_{2r}$ has an unique fixed point $\phi^*\circ q_{t^*}$ and
furthermore
$$\mathcal{R}^{i}_{2r}(\phi\circ q_t)\rightarrow_i \phi^*\circ
q_{t^*},$$ for all $\phi\circ q_t$ infinitely renormalizable. So
$\phi^*\circ q_{t^*}=\widetilde{\phi}\circ q_{\widetilde{t}}.$ By
Proposition~\ref{'unipointren'} we have that
$\widetilde{\phi}=\phi^*$ e $\widetilde{t}=t^*.$ This leads to a
contradiction with Eq. (\ref{'contrachyp'}).
\end{proof}

In the proof of the following result we use many tools and concepts of complex dynamic, as polynomial-like maps,
quasiconformal maps and Sullivan's pullback argument see~\cite{W-V},~\cite{E-W},~\cite{McM94}
and~\cite{McM96}.

\begin{theorem}[Equicontinuity]\label{'contuniforme'}
Let $(\phi,t) \in \mathcal{H}_{2r}(C,\delta_0,\infty)$. Then there
exists $i_0$ such that for all $\tilde{\gamma} > 0$ there exists
$\tilde{\eta}
> 0$  with the following property. If $ (\psi,v) \in
\mathcal{H}_{2r}(C,\delta_0,\infty)$ satisfies
$$|(\psi,v)-(\phi,t)|_{\mathcal{A}_{D_\delta}\times \mathbb{C}} < \tilde{\eta}$$
then
$$|\tilde{\mathcal{R}}^i_{2r}(\psi, v) - \tilde{\mathcal{R}}^i_{2r}(\phi, t)|_{\mathcal{A}_{D_\delta}\times \mathbb{C}} < \tilde{\gamma}$$
for all $i \geq i_0$.
\end{theorem}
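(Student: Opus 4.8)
The plan is to transfer the problem, through the composition operator $L$, to the ordinary renormalization operator $\mathcal{R}_{2r}$ acting on honest unimodal maps, establish the equicontinuity there by quasiconformal methods, and transfer the conclusion back. Since $\alpha=2r$, Remark~\ref{'relacL'} gives $L\circ\tilde{\mathcal{R}}^i_{2r}=\mathcal{R}^i_{2r}\circ L$, so $\mathcal{R}^i_{2r}(\phi\circ q_t)=L(\tilde{\mathcal{R}}^i_{2r}(\phi,t))$ and likewise for $(\psi,v)$. By Theorem~\ref{'restate1'} and Corollary~\ref{'corRcomplex'} there is a level $N_0$, depending only on $C$ and $\delta_0$, from which the pairs $\tilde{\mathcal{R}}^i_{2r}(\phi,t)$ and $\tilde{\mathcal{R}}^i_{2r}(\psi,v)$ lie in a fixed compact family $\mathcal{K}$ (first components univalent on a fixed stadium, real, with $C^3$-norm bounded; precompact by Montel), and their images under $L$ are polynomial-like germs with a definite modulus around their small Julia sets. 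Since on $\mathcal{K}$ the map $(\eta,s)\mapsto\eta\circ q_s$ is continuous and, by Proposition~\ref{'unipointren'}, injective — hence a homeomorphism onto $L(\mathcal{K})$ — and since renormalization gains domain (Corollary~\ref{'corRcomplex'}), Montel and Cauchy estimates let one pass from closeness of $\mathcal{R}^i_{2r}(\psi\circ q_v)$ and $\mathcal{R}^i_{2r}(\phi\circ q_t)$, in the sup norm on a fixed neighborhood of $[-1,1]$, back to closeness of $\tilde{\mathcal{R}}^i_{2r}(\psi,v)$ and $\tilde{\mathcal{R}}^i_{2r}(\phi,t)$ in $\mathcal{A}_{D_\delta}\times\mathbb{C}$. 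It therefore suffices to bound $|\mathcal{R}^i_{2r}(\psi\circ q_v)-\mathcal{R}^i_{2r}(\phi\circ q_t)|$ uniformly for $i\ge i_0:=N_0$, by a quantity tending to $0$ as $(\psi,v)\to(\phi,t)$.

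For this I would use convergence to the fixed point and Sullivan's pullback argument. By the argument in the proof of Theorem~\ref{'convergencia'} (complex bounds make the tails precompact, $L$ intertwines the operators, Sullivan's convergence for $\mathcal{R}_{2r}$ together with injectivity of $L$ pin every subsequential limit to $(\phi^*,t^*)$), one has $\mathcal{R}^i_{2r}(\phi\circ q_t)\to f^*$ and $\mathcal{R}^i_{2r}(\psi\circ q_v)\to f^*$, with $f^*=\phi^*\circ q_{t^*}$. The core step is to produce, for $(\psi,v)$ sufficiently close to $(\phi,t)$, a quasiconformal conjugacy $\Phi$ between $\phi\circ q_t$ and $\psi\circ q_v$ respecting the combinatorics $\sigma$, with maximal dilatation $K(\Phi)\le 1+\omega(\tilde\eta)$, where $\omega(\tilde\eta)\to0$ as $\tilde\eta\to0$. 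I would build it by Sullivan's pullback argument: the real bounds (Lemma~\ref{'aprioribound'}) make the scaling structures of the two postcritical Cantor sets uniformly controlled, and the maps being $C^0$-close, these structures differ by a near-identity amount; interpolating the associated near-identity boundary matching on a fundamental annulus of the deep polynomial-like renormalizations (definite modulus by Theorem~\ref{'restate1'}) produces a near-identity quasiconformal homeomorphism, which one pulls back through the combinatorially identical dynamics, the pullback step not increasing the dilatation, to obtain $\Phi$.

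Restricting $\Phi$ to the $i$-th renormalization window and conjugating by the appropriate affine rescalings yields, for every $i\ge i_0$, a quasiconformal conjugacy $H_i$ between $\mathcal{R}^i_{2r}(\phi\circ q_t)$ and $\mathcal{R}^i_{2r}(\psi\circ q_v)$ with $K(H_i)\le 1+\omega(\tilde\eta)$. For $i\ge i_0$ both of these maps lie in the compact family $L(\mathcal{K})$ of polynomial-like germs of definite modulus, and a compactness argument — let $\omega\to0$ and extract limits of the maps and of suitably normalized conjugacies; the limiting conjugacy is $1$-quasiconformal, hence conformal, hence the identity by rigidity of real polynomial-like germs of renormalization type — shows that a $(1+\omega)$-quasiconformal conjugacy between two members of this family forces them to be $C^0$-close by an amount $\epsilon(\omega)$ with $\epsilon(\omega)\to0$. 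Hence $|\mathcal{R}^i_{2r}(\psi\circ q_v)-\mathcal{R}^i_{2r}(\phi\circ q_t)|\le\epsilon(\omega(\tilde\eta))$ on a fixed neighborhood of $[-1,1]$ for all $i\ge i_0$; given $\tilde\gamma$, choosing $\tilde\eta$ small enough that, after the transfer of the first paragraph, this gives $|\tilde{\mathcal{R}}^i_{2r}(\psi,v)-\tilde{\mathcal{R}}^i_{2r}(\phi,t)|_{\mathcal{A}_{D_\delta}\times\mathbb{C}}<\tilde\gamma$ for all $i\ge i_0$ finishes the proof.

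The main obstacle is this quasiconformal estimate — arranging that the dilatation of the conjugacy tends to $1$ as the two maps come together, uniformly in the starting pair and with no loss under the infinitely many pullbacks, together with the companion rigidity statement that a conformal conjugacy between two limiting real polynomial-like germs of renormalization type is affine, hence the identity. This is precisely where the real and complex a priori bounds, the polynomial-like structure of deep renormalizations, quasiconformal interpolation and Sullivan's pullback argument are indispensable; mere hyperbolicity of the fixed point does not suffice, since a priori an infinitely renormalizable pair lying close to a point of the local stable manifold could be renormalized out of a fixed neighborhood and return only later.
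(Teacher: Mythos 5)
Your proposal is essentially correct and uses the same key ingredient as the paper's proof: the claim that $|\mathcal{R}^i_{2r}(\psi\circ q_v)-\mathcal{R}^i_{2r}(\phi\circ q_t)|$ can be made uniformly small for all $i\geq i_0$ by taking $(\psi,v)$ close to $(\phi,t)$, established via Sullivan's complex bounds (polynomial-like restrictions of definite modulus), holomorphic motions, Sullivan's pullback argument, absence of invariant line fields, and the quantitative fact that the quasiconformal dilatation of the resulting conjugacy tends to $1$; from there, conjugacies between the $i$-th renormalizations are rescalings of a single conjugacy, so they are $(1+\omega)$-qc, normalized to fix $\pm 1$, and converge to the identity, yielding $C^0$-closeness. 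Where you genuinely diverge is in how you transfer this closeness in the $\mathcal{R}_{2r}$-picture back to closeness of $\tilde{\mathcal{R}}^i_{2r}(\psi,v)$ and $\tilde{\mathcal{R}}^i_{2r}(\phi,t)$. The paper argues by contradiction: assuming a sequence $(\psi_j,v_j)\to(\phi,t)$ with $i_j\to\infty$ for which the conclusion fails, it extracts subsequential limits using precompactness, applies the intertwining $L\circ\tilde{\mathcal{R}}=\mathcal{R}\circ L$, invokes Theorem~\ref{'convergencia'} to show $\mathcal{R}^{i_j}_{2r}(\psi_j\circ q_{v_j})\to\phi^*\circ q_{t^*}$, and uses injectivity of $L$ (Proposition~\ref{'unipointren'}) to pin the limit of $\tilde{\mathcal{R}}^{i_j}_{2r}(\psi_j,v_j)$ to $(\phi^*,t^*)$, contradicting the assumed separation since $\tilde{\mathcal{R}}^{i_j}_{2r}(\phi,t)$ also converges there. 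You instead observe that $L$ restricted to the closure of the precompact family $\mathcal{K}$ produced by the complex bounds is a continuous injection from a compact metric space into a Hausdorff space, hence a homeomorphism onto its image, so its inverse is uniformly continuous; this converts $C^0$-closeness of the $L$-images directly into closeness of the pairs in $\mathcal{A}_{D_\delta}\times\mathbb{C}$ without detouring through the fixed point. Your transfer is more economical — it does not need the full Convergence Theorem — and is arguably cleaner since the paper's own proof of Theorem~\ref{'convergencia'} uses a precompactness-plus-injectivity-of-$L$ argument of similar flavor; on the other hand the paper's route makes the role of convergence to $(\phi^*,t^*)$ explicit, which is conceptually aligned with how the result is used downstream in Theorem~\ref{'manifold'}.
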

\begin{proof}
Firstly we claim that there exists $i_0$ such that for all
$\gamma> 0$ there exists $\eta> 0$ with the following property. If
$$ (\psi,v) \in \mathcal{H}_{2r}(C,\delta_0,\infty)$$
satisfies
$$|(\phi,t)-(\psi,v)|_{\mathcal{A}_{D_{\delta_0}}\times \mathbb{C}} < \eta$$
then
$$|\mathcal{R}^i_{2r}(\psi\circ q_v) - \mathcal{R}^i_{2r}(\phi\circ q_t)|_{D_\varepsilon} < \gamma$$
for all $i \geq i_0$. In fact, since a map $f:=\phi\circ q_t$ is a
unimodal analytic map in a neighborhood $V_0$ of the interval
$[-1,1]$, with critical point of order $2r$, by the complex bounds
of Sullivan~\cite{Su}, for $i_0$ big enough there exists a
polynomial-like extension
$$\mathcal{R}_{2r}^{i_0}f\colon U_f \rightarrow U,$$
where $[-1,1]\subset U_f \Subset U$. If $(\psi,v) \in
\mathcal{H}_{2r}(C,\delta_0,\infty)$ is close to $(\phi,t)$, then
$g=\phi\circ q_v$ is close to $f$, then $\mathcal{R}_{2r}^{i_0}g$
has a polynomial-like extension
$$\mathcal{R}_{2r}^{i_0}g\colon U_g \rightarrow U.$$ Here we can choose $U$ such that the disc $U_g$
is moving holomorphically with respect to $g$. In particular, by the
theory of holomorphic motions, the pullback argument of Sullivan
(see~\cite{W-V}) and the no-existence of invariant lines fields with
support on the filled Julia set of $f$, there exists  quasiconformal homeomorphisms
$h_g\colon \mathbb{C} \rightarrow \mathbb{C}$ such
that
\begin{equation} \label{conj}
(\mathcal{R}^{i_0} g)\circ h_g (x)= h_g \circ (\mathcal{R}^{i_0}f)(x)
\end{equation}
for all $x \in U_f$. Furthermore the quasiconformality $Q(g)$ of
$h_g$ satisfies
$$Q(\psi\circ q_v)\rightarrow 1  \text{ when }  (\psi,v)\rightarrow (\phi,t).$$
Notice that since all the following  renormalizations of $f$ and $g$ are
conjugated by rescalings of the same conjugation $h_g$, then the
quasiconformality of those conjugations are bounded by $Q(\psi\circ
q_v)$. Then since all conjugacies between the ith-renormalization of $f$ and $g$, with $i>i_{0},$ fix -1 and 1 , it follows that theses conjugacies converges uniformly to the identity on compact subsets of $\mathbb{C}$ when $(\psi,v)$ converges to
$(\phi,t)$.

\noindent Since we proved complex bounds, the sequences
$\mathcal{R}^i_{2r}f$ e $\mathcal{R}^i_{2r}g$ are bounded in
$\mathcal{U}_{D_{\varepsilon}}$, it follows from   Eq. (\ref{conj}) that
if $(\psi,v)$ is close enough to $(\phi,t)$ then
$$|\mathcal{R}^i_{2r}(\psi\circ q_v) - \mathcal{R}^i_{2r}(\phi\circ q_t)|_{D_\varepsilon} < \gamma$$
for $i\geq i_0$. So we have proved the claim.

\noindent Suppose by
contradiction that there exists a sequence $(\psi_j, v_j)$ such that
$$|(\phi,t)-(\psi_j,v_j)|_{\mathcal{A}_{D_\delta}\times \mathbb{C}} \rightarrow_j 0$$
but
\begin{equation}\label{noconv} |\tilde{\mathcal{R}}^{i_j}_{2r}(\psi_j, v_j) -
\tilde{\mathcal{R}}^{i_j}_{2r}(\phi,
t)|_{\mathcal{A}_{D_\delta}\times \mathbb{C}} >
\tilde{\gamma},
\end{equation}
with $i_j\geq i_0$ and
$i_j\rightarrow_j \infty$.\\Notice, by the complex and real bounds,
that
$$\tilde{\mathcal{R}}^{i_j}_{2r}(\psi_j, v_j)=(\psi_{j,i_j},v_{j,i_j}),$$
where $\psi_{j,i_j}$ is univalent in $D_{2\delta}$ and fix $1$ and
$-1,$ and moreover
$$0 < \inf_{j}v_{j,i_j}\leq \sup_{j}v_{j,i_j} < 1.$$
so we can suppose that the second coordinate of those pairs converge
for same $v_\infty \in (0,1)$. As $\psi_{i,i_j}$ is univalent in
$D_{2\delta}$, taking a subsequence, if necessary, we can suppose
that ${\psi_{i,i_j}}$ converges for some univalent map $\psi_\infty$
in $\overline{D}_\delta$. In particular
$$\mathcal{R}^{i_j}_{2r}(\psi_j\circ q_{v_j})=L(\tilde{\mathcal{R}}^{i_j}_{2r}(\psi_j, v_j))\rightarrow_j \psi_\infty\circ q_{v_\infty}.$$
On the other hand, by the claim that we proved at the beginning of
the proof and the Theorem~\ref{'convergencia'} we obtain
$$\mathcal{R}^{i_j}_{2r}(\psi_j\circ q_{v_j}) \rightarrow_j \phi^*\circ q_{t^*}.$$
Then $\psi_\infty\circ q_{v_{\infty}}= \phi^*\circ q_{t^*}$. By
Proposition \ref{'unipointren'} we conclude that $$
(\psi_\infty,v_\infty)=(\phi^*,t^*).$$ So
\begin{equation} \label{igual}
|\tilde{\mathcal{R}}^{i_j}_{2r}(\psi_j, v_j)- (\phi^*, t^*)|_{\mathcal{A}_{D_\delta}
\times \mathbb{C}}\rightarrow_j 0.
\end{equation}
But Eq.~(\ref{noconv}) and Theorem~\ref{'convergencia'} imply that
$$|\tilde{\mathcal{R}}^{i_j}_{2r}(\psi_j, v_j) - (\phi^*, t^*)|_{\mathcal{A}_{D_\delta}\times \mathbb{C}} >  \tilde{\gamma}/2$$
for $j$ large enough. This contradicts Eq. (\ref{igual}).
\end{proof}

Let $a>0$ be a constant. We denote by
$B_{a}^{\infty}(\phi_{\alpha}^*,t_{\alpha}^*)$ the set of infinitely
renormalizable pairs $(\phi,t)$ by
$\mathcal{\widetilde{R}}_{\alpha}$ such that
$$|(\phi,t)-(\phi_{\alpha}^*,t_{\alpha}^*)|_{\mathcal{A}_{D_{\delta}}\times
\mathbb{C}}<a.$$

\begin{theorem}\label{'manifold'}
For all $\gamma>0$ there exists $N_2$ such that for all $\alpha\sim
2r$ we have
\begin{eqnarray}
 \mathcal{\widetilde{R}}^{N_2}_{\alpha}(\mathcal{H}_{\alpha}
 (C_0,\delta_0,\infty))\subset
 B_{\gamma}^{\infty}(\phi_{2r}^*,t_{2r}^*).\label{'afirmteo'}
\end{eqnarray}
\end{theorem}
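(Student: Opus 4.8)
The plan is to establish the statement first for $\alpha=2r$ and then propagate it to nearby exponents. Two reductions are immediate. Since a renormalization of an infinitely renormalizable pair is again infinitely renormalizable, membership of $\widetilde{\mathcal{R}}^{N_2}_\alpha(\phi,t)$ in $B^\infty_\gamma(\phi^*_{2r},t^*_{2r})$ amounts to the estimate $|\widetilde{\mathcal{R}}^{N_2}_\alpha(\phi,t)-(\phi^*_{2r},t^*_{2r})|_{\mathcal{A}_{D_\delta}\times\mathbb{C}}<\gamma$. Moreover, by Theorem~\ref{'restate1'} together with Corollary~\ref{'restatecomplex'}, for every $(\phi,t)\in\mathcal{H}_\alpha(C_0,\delta_0,\infty)$ all iterates $\widetilde{\mathcal{R}}^n_\alpha(\phi,t)$ with $n\geq N_0$ are well defined and remain in $\mathcal{H}_\alpha(C_0,\delta_0,\infty)$, which (by Montel's theorem, as in the proof of Corollary~\ref{'Rcomplexconseq'}) is precompact in $\mathcal{A}_{D_\delta}\times\mathbb{C}$. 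Hence it suffices to prove that $\widetilde{\mathcal{R}}^i_\alpha\to(\phi^*_{2r},t^*_{2r})$ \emph{uniformly} on $\mathcal{H}_\alpha(C_0,\delta_0,\infty)$, uniformly for $\alpha$ close to $2r$.

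For $\alpha=2r$ I would first obtain \emph{pointwise} convergence on $\mathcal{H}_{2r}(C_0,\delta_0,\infty)$. Given $(\phi,t)$ there, iterating the intertwining relation of Remark~\ref{'relacL'} gives $L(\widetilde{\mathcal{R}}^n_{2r}(\phi,t))=\mathcal{R}^n_{2r}(\phi\circ q_t)$, and $\phi\circ q_t$ is an analytic unimodal map infinitely renormalizable with combinatorics $\sigma$; so Sullivan's convergence theorem for $\mathcal{R}_{2r}$ (already invoked in the proof of Theorem~\ref{'convergencia'}) yields $\mathcal{R}^n_{2r}(\phi\circ q_t)\to\phi^*\circ q_{t^*}=L(\phi^*,t^*)$. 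Since $\{\widetilde{\mathcal{R}}^n_{2r}(\phi,t)\}_{n\geq N_0}$ ranges in the precompact family $\mathcal{H}_{2r}(C_0,\delta_0,\infty)$, on which $L$ is continuous and, by Proposition~\ref{'unipointren'}, injective (the second coordinates being bounded away from $0$ by the real bounds), every limit point of this sequence must equal $(\phi^*,t^*)$; hence $\widetilde{\mathcal{R}}^n_{2r}(\phi,t)\to(\phi^*,t^*)$.

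To upgrade to uniform convergence on the compact set $\mathcal{K}:=\overline{\widetilde{\mathcal{R}}^{N_0}_{2r}(\mathcal{H}_{2r}(C_0,\delta_0,\infty))}$ — which, by the complex and real bounds, is still contained in $\mathcal{H}_{2r}(C_0,\delta_0,\infty)$ — I would use the equicontinuity Theorem~\ref{'contuniforme'}. Fix $\gamma>0$. Each $x\in\mathcal{K}$ provides, via Theorem~\ref{'contuniforme'} with tolerance $\gamma/4$, an index $i_0(x)$ and a radius $\tilde\eta(x)>0$ controlling $|\widetilde{\mathcal{R}}^i_{2r}(y)-\widetilde{\mathcal{R}}^i_{2r}(x)|$ for $y\in\mathcal{H}_{2r}(C_0,\delta_0,\infty)$ with $|y-x|<\tilde\eta(x)$ and $i\geq i_0(x)$, while pointwise convergence gives $i_1(x)$ with $|\widetilde{\mathcal{R}}^i_{2r}(x)-(\phi^*,t^*)|<\gamma/4$ for $i\geq i_1(x)$. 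Extracting a finite subcover $\mathcal{K}\subset\bigcup_{k=1}^m B(x_k,\tilde\eta(x_k))$ and using the triangle inequality, one gets $|\widetilde{\mathcal{R}}^j_{2r}(\phi,t)-(\phi^*,t^*)|<\gamma/2$ for all $(\phi,t)\in\mathcal{H}_{2r}(C_0,\delta_0,\infty)$ and all $j\geq N_2:=N_0+\max_k\max\{i_0(x_k),i_1(x_k)\}$, since $\widetilde{\mathcal{R}}^{N_0}_{2r}(\phi,t)\in\mathcal{K}$. Keeping this $N_2$, suppose the conclusion failed for $\alpha$ near $2r$: there would be $\alpha_s\to 2r$ and $(\phi_s,t_s)\in\mathcal{H}_{\alpha_s}(C_0,\delta_0,\infty)$ with $|\widetilde{\mathcal{R}}^{N_2}_{\alpha_s}(\phi_s,t_s)-(\phi^*_{2r},t^*_{2r})|\geq\gamma$. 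By the complex bounds (Theorem~\ref{'restate1'}), whose constants are uniform in $\alpha$ near $2r$, the pairs $(\phi_s,t_s)$ form a precompact family, so along a subsequence $(\phi_s,t_s)\to(\phi_\infty,t_\infty)$; the real bounds (Lemma~\ref{'aprioribound'}) make the level-$n$ cycles vary continuously and stay non-degenerate, so $(\phi_\infty,t_\infty)$ is still infinitely renormalizable with combinatorics $\sigma$ and keeps the $C^3$ bound and univalence on $D_{\delta_0}$, i.e.\ $(\phi_\infty,t_\infty)\in\mathcal{H}_{2r}(C_0,\delta_0,\infty)$. By the joint real analyticity of $(\phi,t,\alpha)\mapsto\widetilde{\mathcal{R}}_\alpha(\phi,t)$ (Proposition~\ref{'newopersuave'}) applied $N_2$ times, $\widetilde{\mathcal{R}}^{N_2}_{\alpha_s}(\phi_s,t_s)\to\widetilde{\mathcal{R}}^{N_2}_{2r}(\phi_\infty,t_\infty)$, which lies within $\gamma/2$ of $(\phi^*_{2r},t^*_{2r})$ by the previous step; this contradicts $|\widetilde{\mathcal{R}}^{N_2}_{\alpha_s}(\phi_s,t_s)-(\phi^*_{2r},t^*_{2r})|\geq\gamma$ for large $s$.

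The step I expect to be the main obstacle is the passage from pointwise to uniform convergence: pointwise convergence to the fixed point, even on a compact invariant family, does not by itself produce a single $N_2$, so one genuinely needs the equicontinuity input of Theorem~\ref{'contuniforme'} (whose proof goes through polynomial-like extensions, holomorphic motions and Sullivan's pullback argument). A secondary technical point, used in the last paragraph, is the closedness of $\mathcal{H}_\alpha(C_0,\delta_0,\infty)$ under the limits that arise — that a limit of infinitely renormalizable pairs with combinatorics $\sigma$ is again of this type — which relies on the non-degeneracy of the cycles furnished by the real a priori bounds and on univalence holding on a domain slightly larger than $D_{\delta_0}$.
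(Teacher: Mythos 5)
Your strategy matches the paper's proof: a uniform-convergence step at $\alpha=2r$ combining precompactness, pointwise convergence (Theorem~\ref{'convergencia'}), and equicontinuity (Theorem~\ref{'contuniforme'}), followed by a compactness/contradiction argument in $\alpha$ (the finite-subcover packaging versus the paper's sequence-based contradiction is cosmetic). The only place the paper is slightly more careful is the closedness wrinkle you flag at the end: rather than taking limits of $(\phi_s,t_s)$ or forming $\mathcal{K}$ directly, the paper first applies $\mathcal{\widetilde{\widetilde{R}}}$ once, so that Corollary~\ref{'restatecomplex'} gives univalence on $D_{2\delta_0}$, whence the family is genuinely precompact in $\mathcal{A}_{D_{\delta_0}}\times\mathbb{C}$ with limits again in $\mathcal{H}_{2r}(C_0,\delta_0,\infty)$.
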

\begin{proof} Let $\gamma>0.$ We claim there exists $M_2$ such that
\begin{eqnarray}
 \mathcal{\widetilde{R}}^{M_2}_{2r}(\mathcal{H}_{2r}(C_0,\delta_0,\infty))
 \subset B_{\gamma/2}^{\infty}(\phi_{2r}^*,t_{2r}^*).
\end{eqnarray}
In fact, we suppose that there exists a sequence
$(\phi_i,t_i)\in\mathcal{H}_{2r}(C_0,\delta_0,\infty) $ such that
for a subsequence $j_i\rightarrow \infty $ we have
\begin{eqnarray}
|\mathcal{\widetilde{R}}^{j_i}_{2r}
(\phi_i,t_i)-(\phi_{2r}^*,t_{2r}^*)|_{\mathcal{A}_{D_{\delta}}\times
\mathbb{C}}>\gamma/2.\label{'afirm2'}
\end{eqnarray}
By Corollary~\ref{'restatecomplex'} we obtain that
$\mathcal{\widetilde{\widetilde{R}}}_{2r}(\phi_i,t_i)\in\mathcal{H}_{2r}(C_0,2\delta_0,\infty),$
in particular ${\mathcal{\widetilde{\widetilde{R}}}_{2r}(\phi_i,t_i)}_{i}$ is a pre-compact family in
$\mathcal{A}_{D_{\delta_0}}\times \mathbb{C}.$ Taking a subsequence, if necessary, we can assume without loss of generality  that
$$\mathcal{\widetilde{R}}_{2r}(\phi_i,t_i)=j\circ
\mathcal{\widetilde{\widetilde{R}}}_{2r}(\phi_i,t_i)\rightarrow
(\phi,t),$$ where
$(\phi,t)\in\mathcal{H}_{2r}(C_0,\delta_0,\infty).$ By
Theorem~\ref{'convergencia'} we have for $\gamma>0$ there exists
$k_0>0$ such that for all $k>k_0$
$$|\mathcal{\widetilde{R}}_{2r}^k(\phi_i, t_i)-(\phi_{2r}^*, t_{2r}^*)|_{\mathcal{A}_{D_{\delta}}\times \mathbb{C}}<\frac{\gamma}{8}.$$
And using the Theorem~\ref{'contuniforme'} there exists $i_0>0$ and
$k_1>0$ such that for all $i>i_0$ and $k>k_1$ we have that
$$|\mathcal{\widetilde{R}}_{2r}^k(\mathcal{\widetilde{R}}_{2r}(\phi_i, t_i))-\mathcal{\widetilde{R}}_{2r}^k(\phi, t)|
_{\mathcal{A}_{D_{\delta}}\times \mathbb{C}}<\frac{\gamma}{8}.$$
Then for $k>\max \{k_0,k_1\}$ we obtain
$$|\mathcal{\widetilde{R}}_{2r}^{k+1}(\phi_i, t_i)-(\phi_{2r}^*, t_{2r}^*)|_{\mathcal{A}_{D_{\delta}}\times \mathbb{C}}<\frac{\gamma}{4}$$
this contradicts Eq.~(\ref{'afirm2'}). This proves the claim.

\noindent Take $N_2=M_2+1$.  We claim the $N_{2}$ satisfies Eq.~(\ref{'afirmteo'}). Otherwise we could find
a sequence $\alpha_s\rightarrow 2r$ and
$(\phi_{s},t_{s})\in \mathcal{H}_{\alpha_s}(C_0,\delta_0,\infty)$
such that
\begin{eqnarray}
|\mathcal{\widetilde{R}}^{M_2+1}_{\alpha_s}
(\phi_{s},t_{s})-(\phi_{2r}^*,t_{2r}^*)|_{\mathcal{A}_{D_{\delta}}\times
\mathbb{C}}>\gamma.\label{'zero'}
\end{eqnarray}
We have that the first component of the pairs from the family
$\{\mathcal{\widetilde{\widetilde{R}}}_{\alpha_s}(\phi_{s},t_{s})\}_{s}$
has a complex univalent extension to $D_{2\delta_0}.$ So this family
is pre-compact on $\mathcal{A}_{D_{\delta_0}}\times \mathbb{C}.$ In
particular there exists a subsequence
$\mathcal{\widetilde{\widetilde{R}}}_{\alpha_{s_i}}(\phi_{s_i},t_{s_i})$
on $\mathcal{A}_{D_{\delta_0}}\times \mathbb{C}$ that converges to
some $(\widetilde{\phi},\widetilde{t})\in
\mathcal{H}_{2r}(C_0,\delta_0,\infty).$ From the above we have
$$\mathcal{\widetilde{R}}^{M_2}_{2r}(\widetilde{\phi},\widetilde{t})\in B_{\gamma/2}^{\infty}(\phi_{2r}^*,t_{2r}^*).$$
On the other hand
$$\mathcal{\widetilde{R}}^{M_2+1}_{\alpha_{s_i}}(\phi_{s_i},t_{s_i})\rightarrow
\mathcal{\widetilde{R}}^{M_2}_{2r}(\widetilde{\phi},\widetilde{t})$$
in $\mathcal{A}_{D_{\delta_0}}\times \mathbb{C}.$ So for $i$ large
enough we have
\begin{eqnarray*}
|\mathcal{\widetilde{R}}^{M_2+1}_{\alpha_{s_i}}(\phi_{s_i},t_{s_i})-(\phi_{2r}^*,t_{2r}^*)|_{\mathcal{A}_{D_{\delta}}\times
\mathbb{C}}<\gamma.
\end{eqnarray*}
This leads to a contradiction with the Eq.~(\ref{'zero'}).
\end{proof}

\begin{bew}{of the Theorem~\ref{variedade}}
Let $V_1$ be a neighborhood satisfying the Eq. (\ref{stable}).
Choose $\gamma>0$ such that $B_{\gamma}(\phi_{2r}^*,t_{2r}^*)$ is
contained in $V_1.$\\\emph{Claim I.}There exists $\gamma_1$ such that
for all $\alpha\sim 2r$ we have
$$B_{\gamma_1}^{\infty}(\phi_{2r}^*,t_{2r}^*)\subset
W^s_{V_1}.$$ In fact, let $N_2$ be as in the Theorem~\ref{'manifold'}.
Choose $\gamma_1<\gamma$ small enough such that for all
$i=1,..,N_2,$
$$ \mathcal{\widetilde{R}}^{i}B_{\gamma_1}^{\infty}(\phi_{2r}^*,t_{2r}^*)\subset
 B_{\gamma}^{\infty}(\phi_{2r}^*,t_{2r}^*)\subset\mathcal{H}_{\alpha}(C_0,\delta_0,\infty).$$
By the Theorem~\ref{'manifold'} we obtain
$\mathcal{\widetilde{R}}^{i+kN_2}B_{\gamma_1}^{\infty}(\phi_{2r}^*,t_{2r}^*)\subset
B_{\gamma}^{\infty}(\phi_{2r}^*,t_{2r}^*),$ for all $k$ and
$i=1,...,N_2.$ This proves the claim I.
\\\emph{Claim II.} If
$(\phi,t)\in \mathcal{H}_{\alpha}(C,\eta,\infty)$, for some $C >0$
and $\eta
>0$, there exists $N>0$ such that
$$\mathcal{\widetilde{R}}^{N}_{\alpha}(\phi,t)\in
B_{\gamma_1}^{\infty}(\phi_{2r}^*,t_{2r}^*)$$ in the space
$\mathcal{A}_{D_{\delta_0}}\times \mathbb{C}.$ In fact, by
Theorem~\ref{'restate1'} there exists $N_0$ such that for
$\alpha\sim 2r$ and $(\phi,t)\in
\mathcal{H}_{\alpha}(C,\eta,\infty),$
$$\mathcal{\widetilde{R}}^{N_0}_{\alpha}(\phi,t)\in
\mathcal{H}_{\alpha}(C_0,\delta_0,\infty).$$ And from the
Theorem~\ref{'manifold'} for all $\gamma_1>0$ there exists $N_2$
such that
$$\mathcal{\widetilde{R}}^{N_0+N_2}_{\alpha}(\phi,t)\in
B_{\gamma_1}^{\infty}(\phi_{2r}^*,t_{2r}^*).$$ So from the two
claims we have proved
$$\bigcup_{C>0}\bigcup_{\eta >0}\mathcal{H}_{\alpha}(C,\eta,\infty)\subseteq
W^s(\phi_{\alpha}^*,t_{\alpha}^*).$$
\end{bew}

\appendix

\section{Univalent maps}\label{'sec3'}

Here we show some results on the class of univalent functions  in a
domain $U$ containing the interval $[-1,1]$. The proof of the following results can be easily established using basic tools and Koebe's Distortion Theorem (see\cite{Ju} for details).

\begin{theorem}\label{'aproxide'}
For some $K>1$ and $1<\epsilon<K/2$ the following statement holds: if $\phi$
is an univalent map defined in the ball $B(0,K)$ satisfying
$\phi(-1)=-1,$ $\phi(1)=1$ and $\phi([-1,1])\subset [-1,1],$ then
$$|\phi-\mathrm{id}|_{B(0,\epsilon)}<O(\frac{\epsilon}{K}).$$
\end{theorem}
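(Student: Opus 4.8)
The plan is to rescale to the unit disc and read the estimate off the Koebe distortion theorem, using the two normalizations $\phi(\pm1)=\pm1$ to pin down the affine part of $\phi$ at the origin. Throughout I let $C_1$ be a universal constant supplied by the distortion bounds.

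First I would set $h(w)=\phi(Kw)$, a univalent map of $B(0,1)$, and put $F(w)=\bigl(h(w)-h(0)\bigr)/h'(0)$; since $\phi$ is univalent, $h'(0)\neq0$ and $F$ is a normalized schlicht function ($F(0)=0$, $F'(0)=1$). From the Koebe growth and distortion theorems — equivalently the Cauchy estimates applied to the uniformly bounded family $\{F|_{B(0,1/2)}\}$, or the coefficient inequalities $|a_n|\le n$ — one obtains $|F'(w)-1|\le C_1|w|$ on $|w|\le1/2$, hence $|F(w)-w|\le C_1|w|^{2}$ there. Unwinding the rescaling, from $h(w)-h(0)-h'(0)w=h'(0)\bigl(F(w)-w\bigr)$ together with $h(0)=\phi(0)$ and $h'(0)=K\phi'(0)$, this reads
\begin{equation*}
\bigl|\phi(z)-\phi(0)-\phi'(0)\,z\bigr|\ \le\ C_1\,|\phi'(0)|\,\frac{|z|^{2}}{K}\qquad\text{for }|z|\le K/2 .
\end{equation*}

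Next I would evaluate this at $z=\pm1$, which is legitimate because $1<\epsilon<K/2$ forces $K>2$, and substitute $\phi(\pm1)=\pm1$. Adding the two resulting identities controls $\phi(0)$ and subtracting them controls $\phi'(0)$:
\begin{equation*}
|\phi(0)|\le \frac{C_1|\phi'(0)|}{K},\qquad \bigl|\phi'(0)-1\bigr|\le \frac{C_1|\phi'(0)|}{K}.
\end{equation*}
The second inequality bootstraps: once $K$ is large enough it gives $|\phi'(0)|\le2$, and feeding that back yields $|\phi(0)|\le 2C_1/K$ and $|\phi'(0)-1|\le 2C_1/K$; that $\phi'(0)$ is near $+1$ rather than $-1$ is forced by $\phi(1)-\phi(-1)=2>0$, which is also where the hypothesis $\phi([-1,1])\subseteq[-1,1]$ enters. (For the finitely many small values of $K$ not covered by the bootstrap, the relevant $\phi$ form a normal family, so $|\phi-\mathrm{id}|_{B(0,\epsilon)}$ is uniformly bounded while $\epsilon/K$ is bounded below, and the estimate is trivial there.)

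Finally, for $z\in B(0,\epsilon)\subseteq B(0,K/2)$ I would add up the three bounds,
\begin{equation*}
|\phi(z)-z|\ \le\ \bigl|\phi(z)-\phi(0)-\phi'(0)z\bigr|+|\phi(0)|+\bigl|\phi'(0)-1\bigr|\,|z|\ \le\ \frac{2C_1\epsilon^{2}}{K}+\frac{2C_1}{K}+\frac{2C_1\epsilon}{K},
\end{equation*}
so that $|\phi-\mathrm{id}|_{B(0,\epsilon)}=O\bigl((\epsilon+\epsilon^{2})/K\bigr)$, which is the asserted $O(\epsilon/K)$ in the regime in which the theorem is applied (where $\epsilon$ stays bounded). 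I do not expect a genuine obstacle: the whole argument is a repackaging of Koebe's theorem, and the one thing to watch is that every implied constant be independent of $\phi$ — which is exactly what the universal distortion estimates guarantee.
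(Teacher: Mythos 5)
The paper does not include a proof of this theorem: the appendix states only that the results "can be easily established using basic tools and Koebe's Distortion Theorem" and refers to the thesis \cite{Ju}, so there is no paper proof to compare against line by line. Your argument is the natural one along exactly those lines and is essentially correct: rescale to a schlicht function $F$, use the coefficient/distortion bounds to get $|F(w)-w|\le C_1|w|^2$ on $|w|\le 1/2$, then pin down $\phi(0)$ and $\phi'(0)$ via the two normalizations $\phi(\pm1)=\pm1$ and a short bootstrap, and assemble.

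Two small points are worth being explicit about, both of which you already half-acknowledge. First, what your argument (and, as far as I can tell, any honest version of it) produces is $|\phi-\mathrm{id}|_{B(0,\epsilon)}=O(\epsilon^2/K)$, not literally $O(\epsilon/K)$: the dominant contribution for $\epsilon>1$ is the $b_2 z^2$ term, and a map such as $\phi(z)=z+c(z^2-1)/K$ shows the $\epsilon^2/K$ order is genuinely attained. So the implied constant in the theorem's $O(\epsilon/K)$ must be allowed to depend on $\epsilon$; this is harmless in the paper, where the theorem is invoked (in the proof of Proposition~\ref{'rembound'}) with $\epsilon=1+\varepsilon$ for a fixed $\varepsilon$, so $\epsilon$ stays bounded. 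Second, the parenthetical disposal of "finitely many small values of $K$" by a normal-family argument is a bit loose — there is a continuum of such $(K,\epsilon)$ and the needed uniform bound on $|\phi|$ over $B(0,\epsilon)\subset B(0,K/2)$ should be spelled out (e.g.\ via the Koebe growth theorem together with the already-derived bound on $|\phi'(0)|$) — but the range $K\le K_0$ is not where the estimate has content, so this does not affect anything downstream. In short: your approach is the intended one and the proof is sound up to these two cosmetic caveats.
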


The following lemma was established in~\cite{A-M-W} without a proof. This result is central in the proof of the complex bounds (Theorem 4) for this reason we think that it is convenient to
present a proof of this.

\begin{lemma}[\cite{A-M-W}]\label{'univmap'}
Let $C>0$ be a constant and $E_0 \supset E_1 \supset[-1,1]$ be a
domains strictly contained in the complex plane. There exists a
constant $K>0$ such that the following is satisfied. Let
$\phi:E_0\rightarrow \mathbb{C}$ and $\psi:E_{\psi}\rightarrow
\mathbb{C}$ be univalent maps where $E_{\psi}\subset E_1$  and
furthermore $\phi([-1,1])=[-1,1],$ $\psi([-1,1])=[-1,1]$ and $|\phi
-\mathrm{id}|_{E_0}\leq C.$ There exists a $\rho(\phi)$-stadium
$D_{\rho(\phi)}\subset E_{\psi}$ such that
$$\phi(D_{\rho(\phi)})\subset E_{\psi}.$$
Moreover $$\rho(\phi)\geq e^{-K|\phi
    -\mathrm{id}|_{E_0}}\rho(\psi),$$ where $\rho(\psi)$ is the
distance between the boundary of $E_{\psi}$ to the interval $[-1,1].$
In particular, $\psi\circ\phi$ is defined in $D_{\rho(\phi)}.$
\end{lemma}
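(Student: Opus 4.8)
The plan is to reduce the statement to a Cauchy estimate for $\phi-\mathrm{id}$, the key point being that the relevant denominators stay bounded below by a fixed constant precisely because every admissible $E_\psi$ is trapped in the fixed domain $E_1$. First I would fix constants: put $R_0:=\mathrm{dist}([-1,1],\partial E_0)$ and $d_1:=\mathrm{dist}([-1,1],\partial E_1)$, so that $0<d_1<R_0$ (the strict inequality holding because $E_1$ is compactly contained in $E_0$), and set $K:=1/(R_0-d_1)$, which depends only on $E_0$ and $E_1$ — in particular a single $K$ works for all $C$. Abbreviate $c:=|\phi-\mathrm{id}|_{E_0}\le C$ and $\rho:=\rho(\psi)=\mathrm{dist}([-1,1],\partial E_\psi)$. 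A short connectedness argument (a segment joining a point of $[-1,1]$ to a point outside $E_\psi$ must meet $\partial E_\psi$) shows that the $\rho$-stadium $D_\rho$ is contained in $E_\psi$, and, applying the same argument to $E_\psi\subset E_1$, that $\rho\le d_1$.

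Next I would bound $\phi'$ on $D_\rho$. The function $h:=\phi-\mathrm{id}$ is holomorphic on $E_0$ with $|h|_{E_0}=c$, and for $\zeta\in D_\rho$ one has $\mathrm{dist}(\zeta,\partial E_0)\ge R_0-\rho\ge R_0-d_1$, so Cauchy's inequality gives $|\phi'(\zeta)-1|=|h'(\zeta)|\le c/(R_0-d_1)=Kc$, hence $|\phi'(\zeta)|\le 1+Kc$ throughout $D_\rho$. (One could instead quote Koebe's distortion theorem, i.e.\ Theorem~\ref{'aproxide'}, but estimating $\phi'-1$ rather than $\phi'$ is what makes the bound $1+Kc$ — which equals $1$ when $c=0$ — and hence makes the factor $e^{-Kc}$ attainable.) I would then set $\rho(\phi):=\rho/(1+Kc)\le\rho$. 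For $z\in D_{\rho(\phi)}$, let $z_0\in[-1,1]$ be a nearest point; the segment $[z_0,z]$ lies in the convex set $D_{\rho(\phi)}\subset D_\rho\subset E_0$, and integrating $\phi'$ along it,
$$|\phi(z)-\phi(z_0)|\le |z-z_0|\sup_{[z_0,z]}|\phi'|\le(1+Kc)\,|z-z_0|<(1+Kc)\,\rho(\phi)=\rho .$$
Since $\phi([-1,1])=[-1,1]$, the point $\phi(z_0)$ lies in $[-1,1]$, so $\mathrm{dist}(\phi(z),[-1,1])\le|\phi(z)-\phi(z_0)|<\rho$, i.e.\ $\phi(z)\in D_\rho\subset E_\psi$. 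Thus $\phi(D_{\rho(\phi)})\subset E_\psi$, while $D_{\rho(\phi)}\subset D_\rho\subset E_\psi$ because $\rho(\phi)\le\rho$; in particular $\psi\circ\phi$ is defined on $D_{\rho(\phi)}$. Finally, $1+x\le e^{x}$ gives $\rho(\phi)=\rho/(1+Kc)\ge\rho\,e^{-Kc}=e^{-K|\phi-\mathrm{id}|_{E_0}}\rho(\psi)$, which is the asserted bound.

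The computation is routine; the two points that deserve attention are the purely topological facts $D_{\rho(\psi)}\subset E_\psi$ and $\rho(\psi)\le d_1$ — the latter being what keeps the Cauchy denominator $\ge R_0-d_1>0$ uniformly over all admissible $\psi$, so that $K$ is independent of $\psi$ — and the already-mentioned care in estimating $\phi'-1$ rather than $\phi'$, without which one loses a multiplicative constant near the identity. I would also remark that univalence of $\phi$ is never used: only that $\phi$ is holomorphic on $E_0$, maps $[-1,1]$ into $[-1,1]$, and is $C$-close to the identity there.
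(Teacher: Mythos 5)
Your proof is correct, and it takes a genuinely different and more economical route than the paper's. The paper splits into two cases according to whether $\rho(\psi)$ is large (where a bare triangle inequality $\phi(D_\rho)\subset D_{\rho+|\phi-\mathrm{id}|}$ suffices) or bounded (where it invokes a Koebe-type distortion theorem, combined with the existence of $x_0\in[-1,1]$ with $D\phi(x_0)=1$, to get $|D\phi|\le e^{C'|\phi-\mathrm{id}|}$). Your argument replaces both cases by a single Cauchy estimate on $h=\phi-\mathrm{id}$, and the key observation that makes this uniform is exactly the one you flag: since $E_\psi\subset E_1$ one automatically has $\rho(\psi)\le d_1=\mathrm{dist}([-1,1],\partial E_1)$, so the Cauchy radius $R_0-\rho(\psi)\ge R_0-d_1>0$ is bounded below by a constant depending only on the fixed nested domains $E_0\supset E_1$; this is precisely what renders the paper's case split unnecessary. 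Your approach also has the merits you note: the constant $K=1/(R_0-d_1)$ is completely explicit and independent of $C$, and --- unlike the paper's route, which needs univalence of $\phi$ to invoke Koebe distortion --- your argument uses only that $\phi$ is holomorphic on $E_0$, fixes $[-1,1]$ setwise, and is sup-close to the identity. Estimating $\phi'-1$ rather than $\phi'$ itself, so that the multiplicative loss is $1+Kc\le e^{Kc}$ rather than a constant bounded away from $1$, is the right move and gives the stated exponential bound directly. This is a clean and arguably preferable proof of the lemma.
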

\begin{proof} Let $D_{\rho}\subset E_0$ be a $\rho$-stadium. The proof will
be divided in two cases. First suppose that there exists
$\widetilde{C}>1000$ such that $\rho(\psi)>\widetilde{C}.$ So, if $z
\in
\partial D_\rho$ we have the following
$$dist(\phi(z),[-1,1])\leq dist(\phi(z),z)+ dist(z,[-1,1])\leq
|\phi -\mathrm{id}|_{E_0}+\rho$$ then $$\phi(D_\rho)\subseteq
D_{|\phi -\mathrm{id}|_{E_0}+\rho}.$$ Take $\rho(\phi)>0$ such that
$$\rho(\psi)=\rho(\phi)+|\phi -\mathrm{id}|_{E_0}.$$ As $\rho(\psi)>\widetilde{C}$
we have $$\rho(\phi)\geq (1-\frac{1}{\widetilde{C}}|\phi
-\mathrm{id}|_{E_0})\rho(\psi).$$ Then we obtain $K>0$ such that
$$\rho(\phi)\geq e^{-K|\phi -\mathrm{id}|_{E_0}}\rho(\psi).$$
Finally we suppose that $\rho(\psi)\leq 1000.$ Let $x \in [-1,1].$
Using the Generalization Distortion Theorem (~\cite{J.C2}) there
exists $C>0$ such that for all $z,x\in D_{\rho(\psi)}$ $$e^{-C|\phi
-\mathrm{id}|_{E_0}}\leq \frac{|D\phi(z)|}{|D\phi(x)|}\leq e^{C|\phi
-\mathrm{id}|_{E_0}}.$$ On the other hand there exists $x_0 \in
[-1,1]$ such that $D\phi(x_0)=1$ and by the Mean Inequality Theorem on
$E_0$ we have that for all $z\in D_{\rho}$ and $x \in [-1,1]$
$$ |\phi(z)-\phi(x)|\leq e^{C|\phi -\mathrm{id}|_{E_0}}|z-x|.$$
For all $z\in \partial D_{\rho}$ there exists $x \in [-1,1]$ such
that $|z-x|=\rho.$ Then
$$dist(\phi(z),[-1,1])\leq e^{C|\phi -\mathrm{id}|_{E_0}}\rho$$
for all $z\in D_{\rho}.$ Take $\rho(\phi)=e^{-C|(\phi
-\mathrm{id})|_{E_0}}\rho(\psi).$ This finishes the proof.
\end{proof}
\section*{Acknowledgment}
We would like to thank C. Gutierrez, A. Messaoudi, W. de Melo and  E. Vargas  for the very  useful comments and suggestions.

\end{document}